\definecolor{foge}{rgb}{0.1, 0.6, 0.1}
\newcommand{\m}{\medbreak}
\newcommand{\bi}{\bigbreak}
\newcommand{\Thm}[1]{Theorem \ref{#1}}
\newcommand{\Lem}[1]{Lemma \ref{#1}}
\newcommand{\Sct}[1]{Section \ref{#1}}
\newcommand{\Def}[1]{Definition \ref{#1}}
\newcommand{\I}{\{0,\dots,n-1\}}
\newcommand{\iso} {\buildrel \sim \over \rightarrow}
\newcommand{\Ll}{\Lambda}
\newcommand{\Z}{\mathbb{Z}}
\newcommand{\B}{\mathcal{B}}
\newcommand{\Bb}{\mathbb{B}}
\newcommand{\wt}{\overline{\mathrm{wt}}}
\newcommand{\C}{\mathcal{C}}
\newcommand{\E}{\mathcal{E}}
\newcommand{\Co}{\mathcal{C}}
\newcommand{\Pp}{\mathcal{P}}
\newcommand{\Ppp}{\Pp^{\gg}_{\co}}
\newcommand{\Psuc}{\Pp^{\succ}_{\co}}
\newcommand{\Ppm}{\Pp^{\gg}_{\cmu}}
\newcommand{\Pppm}{\Pp^{\gtrdot}_{\cmu}}
\newcommand{\Psucm}{\Pp^{\succ}_{\cmu}}
\newcommand{\Pppp}{\Pp^{\gtrdot}_{\co}}
\newcommand{\od}{\geq}
\newcommand{\gf}{\mathfrak g}
\newcommand{\h}{\mathfrak h}
\newcommand{\ot}{\otimes}
\newcommand{\p}{\mathfrak p}
\newcommand{\co}{c_{g}}
\newcommand{\cmu}{c_{g_0}\cdots c_{g_{t-1}}}
\newcommand{\ov}{\overline}
\newcommand{\ssss}{\{1,\ldots,s-1\}}
\numberwithin{equation}{section}
\newtheorem{theo}{Theorem}[section]
\newtheorem{prop}[theo]{Proposition}
\newtheorem{lem}[theo]{Lemma}
\newtheorem{cor}[theo]{Corollary}
\newtheorem{rem}[theo]{Remark}
\newtheorem{ex}[theo]{Example}
\theoremstyle{definition} 
\newtheorem{deff}[theo]{Definition}
\newenvironment{manualtheorem}[1]{%
  \manualtheoreminner
}{\endmanualtheoreminner}
\title{Multi-grounded partitions and character formulas}
\author{Jehanne Dousse and Isaac Konan}
\begin{document}

\begin{abstract}
We introduce a new generalisation of partitions, multi-grounded partitions,
related to ground state paths indexed by dominant weights of Lie algebras.
We use these to express characters of irreducible highest weight
modules of Kac--Moody algebras of affine type as generating functions for
multi-grounded partitions.
This generalises the approach of our previous paper, where
only irreducible highest weight modules with constant ground
state paths were considered, to all ground state paths.
As an application, we compute the characters of the level $1$ modules of the
affine Lie algebras $A_{2n}^{(2)}(n\geq 2)$, $D_{n+1}^{(2)}(n\geq 2)$, $A_{2n-1}^{(1)}(n\geq 3)$, $B_{n}^{(1)}(n\geq 3)$, and $D_{n}^{(1)}(n\geq 4)$.
\end{abstract}

\maketitle

\section{Introduction and statement of results}
Let $\gf$ be a Kac--Moody affine Lie algebra and let $\h^*$ be the dual of its Cartan subalgebra. Let $P^+$ be the set of dominant integral weights, and $L(\lambda)$ an irreducible highest weight $\gf$-module of highest weight $\lambda \in P^+$.
Then the character of $L(\lambda)$ is defined as
$$
\mathrm{ch} (L(\lambda)) = \sum_{\mu\in \h^*} \dim L(\lambda)_{\mu} \cdot e^{\mu},
$$
where $e$ is a formal exponential, and $\dim L(\lambda)_{\mu}$ is the dimension of the weight space $L(\lambda)_{\mu}$ in the weight space decomposition
$$L(\lambda)=\bigoplus_{\mu \in P} L(\lambda)_{\mu}.$$
More background and definitions can be found in Section \ref{sec:lie}.

Character formulas have been widely studied, starting with the famous Weyl--Kac character formula \cite{Kac}:
\begin{equation}
\label{eq:weylkac}
\mathrm{ch} (L(\lambda)) = \frac {\sum _{ w \in W } \mathrm{sgn}(w) e ^ { w ( \lambda + \rho ) - \rho } } { \prod _ { \alpha \in \Delta ^ { + }} ( 1 - e ^ { -\alpha } ) ^ { \text{dim} \mathfrak{g} _ { \alpha }  } },
\end{equation}
where $W$ is the Weyl group of $\gf$, $\Delta^{+}$ the set of positive roots of $\gf$, $\mathrm{sgn}(w)$ the signature of $w$, $\rho \in \h^*$ the Weyl vector, and  
$\mathfrak{g} _ { \alpha }$ the $\alpha$ root space of $\gf$.

Equation \eqref{eq:weylkac} is beautiful, but it is not so well suited to compute characters in practice. Moreover, even though by definition $e^{-\lambda}\mathrm{ch} (L(\lambda))$ is a series with positive coefficients in the $e^{-\alpha_i}$'s, this positivity is not explicit from the formulation in \eqref{eq:weylkac}. We now briefly explain what solutions have been given to work around these issues, and present a new method which allows us to give simple non-specialised character formulas using perfect crystals and a new generalisation of integer partitions.

\medskip
The first solution to obtain simple character formulas is to perform certain specialisations, i.e. for each of the simple roots $\alpha_i$, applying the transformations $e^{-\alpha_i} \rightarrow q^{s_i}$ for some integer $s_i$. Using this method, it is possible to transform the Weyl--Kac character formula into infinite products. From this point of view, the most effective specialisation is the principal specialisation, where $e^{-\alpha_i} \rightarrow q$ for all $i$. It has been widely exploited in the theory of partition identities related to representations of affine Lie algebras, see for example \cite{Capparelli,GOW16,Meurman,Meurman2,Meurman3,Nandi,Primc1,PrimcSikic,Siladic}. Lepowsky and Milne \cite{Le-Mi1,Le-Mi2} were the first to expose the connection by noting that up to the $(q;q^2)_{\infty}$ factor, the principal specialisation of the Weyl--Kac character formula for level $3$ standard modules of the affine Lie algebra $A_1^{(1)}$ is the product side of the Rogers--Ramanujan identities:
\begin{align*}
\sum_{n\geq 0}\frac{q^{n^2}}{(q;q)_n}&=\frac{1}{(q;q^5)_{\infty} (q^4;q^5)_{\infty}},\\
\sum_{n\geq 0}\frac{q^{n^2+n}}{(q;q)_n}&=\frac{1}{(q^2;q^5)_{\infty} (q^3;q^5)_{\infty}}.
\end{align*} 
Here and in the whole paper, we use the standard $q$-series notation: for $n \in \mathbb{N} \cup \{\infty\}$ and $j \in \mathbb{N}$,
\begin{align*}
(a;q)_n &:= \prod_{k=0}^{n-1} (1-aq^k),\\
(a_1, \dots, a_j ; q)_n &:= (a_1;q)_n \cdots (a_j;q)_n.
\end{align*}
Lepowsky and Wilson \cite{Lepowsky,Lepowsky2} later gave an interpretation of the sum side by constructing a basis of these standard modules using vertex operators. Their method has then led to the discovery of many new $q$-series and partition identities, see e.g. \cite{Capparelli,Nandi,PrimcSikic,Siladic}.

However, without performing a specialisation, it is in general difficult to reduce the Weyl--Kac character formula to obtain a combinatorially simple character formula, with perhaps the exception of the Kac--Peterson formulas \cite{KacPeterson}, which still required a lot work using modular forms.

On the other hand, Bartlett and Warnaar \cite{BW15} gave non-specialised formulas with explicitly positive coefficients for the characters of certain highest weight modules of the affine Lie algebras $C_n^{(1)}$, $A_{2n}^{(2)}$, and $D_{n+1}^{(2)}$ as sums using Hall--Littlewood polynomials. This led them to generalisations for the Macdonald identities for $B_n^{(1)},$ $C_n^{(1)},$ $A_{2n-1}^{(2)},$ $A_{2n}^{(2)}$, and $D_{n+1}^{(2)}.$ Using Macdonald--Koornwinder theory, Rains and Warnaar \cite{WarnaarRains} found additional character formulas for these Lie algebras, together with new Rogers--Ramanujan type identities.

\medskip
In a different direction, Kang, Kashiwara, Misra, Miwa, Nakashima, and Nakayashiki \cite{KMN2a,KMN2b} introduced the theory of perfect crystals to study the irreducible highest weight modules over quantum affine algebras. The proved the so-called ``(KMN)$^2$ crystal base character formula'' \cite{KMN2a},
$$\mathrm{ch}(L(\lambda)) = \sum_{\p \in \mathcal P(\lambda)}  e^{\mathrm{wt} \p},$$
which expresses the character $\mathrm{ch}(L(\lambda))$ as series indexed by $\lambda$-paths. Here the weight $\mathrm{wt} \p$ is computed using the energy function of a perfect crystal. More detail will be given in Section \ref{sec:crystal}.

Primc \cite{Primc} was the first to use this character formula to give new Rogers--Ramanujan type identities related to the level $1$ standard modules of $A_1^{(1)}$ and $A_2^{(1)}$. The product side came from the principally specialised Weyl--Kac character formula again, while the sum side came from the principally specialised (KMN)$^2$ crystal base character formula.
In a couple of previous papers \cite{DK19,DK19-2}, the authors generalised Primc's identities to $A_{n-1}^{(1)}$ for all $n$, and managed to avoid doing a specialisation, therefore retrieving the Kac--Peterson character formula with all its parameters. To do this, we established a bijection between $\lambda$-paths and a new generalisation of partitions called ``grounded partitions'', in the case where the ground state path is constant.

\medskip
Recall that a partition $\pi$ of a positive integer $n$ is a non-increasing sequence of natural numbers $(\pi_1,\dots,\pi_s)$, called parts, whose sum is $n$, the partitions of $4$ being $(4),\ (3,1),\ (2,2),\ (2,1,1),$ and $(1,1,1,1).$
Grounded partitions, which are defined more rigorously in Section \ref{sec:groundedpartitions}, are partitions whose smallest part is fixed, where all the parts are coloured and satisfy particular difference conditions.
In \cite{DK19-2}, using a bijection and the (KMN)$^2$ crystal base character formula, we obtained new character formulas expressing the characters directly as generating functions for grounded partitions:
\begin{align}
\sum_{\pi\in \Pppp} C(\pi)q^{|\pi|} &= e^{-\lambda}\mathrm{ch}(L(\lambda)), \nonumber\\
\sum_{\pi\in \Ppp} C(\pi)q^{|\pi|} &= \frac{e^{-\lambda}\mathrm{ch}(L(\lambda))}{(q;q)_{\infty}}, \label{eq:groundchar}
\end{align}
where $\Pppp$ and $\Ppp$ are sets of grounded partitions depending on the module considered and on the energy function of the corresponding crystal, $q= e^{-\delta/d_0}$ where $\delta = d_0 \alpha_0 + d_1 \alpha_1 + \cdots + d_{n-1} \alpha_{n-1}$ is the null root, and $C(\pi)$ is the colour sequence of the grounded partition $\pi$. This character formula is stated more rigorously in Theorem \ref{theo:formchar}.

However useful, this formula only applies for standard modules whose ground state paths are constant, which is not the case of most modules. The goal of this paper is to extend our method to treat all standard modules, whatever their ground state paths are. To do so, we extend our definition of grounded partitions and introduce so-called ``multi-grounded partitions''. We define them rigorously and prove their connection with crystals and characters in Section \ref{sec:multiground}, but let us already say that among other conditions, multi-grounded partitions now have their $t$ smallest parts fixed for some $t \geq 1$.

Again, by establishing a bijection with $\lambda$-paths, we transform the (KMN)$^2$ crystal base character formula into a character formula using generating functions on multi-grounded partitions (all the notations will become clear once the reader gets to Section \ref{sec:multiground}).
\begin{theo} \label{theo:formchar2}
Setting $q=e^{-\delta/(d_0D)}$ and $c_b=e^{\wt b}$ for all $b\in \B$, we have $c_{g_0}\cdots c_{g_{t-1}}=1$, and the character of the irreducible highest weight $U_q(\gf)$-module
$L(\lambda)$ is given by the following expressions:
\begin{align*}
\sum_{\mu\in _t\Pppm} C(\pi)q^{|\pi|} &= e^{-\lambda}\mathrm{ch}(L(\lambda)),\\
 \sum_{\pi\in \, _t^d\Ppm} C(\pi)q^{|\pi|} &= \frac{e^{-\lambda}\mathrm{ch}(L(\lambda))}{(q^d;q^d)_{\infty}}.
\end{align*}
\end{theo}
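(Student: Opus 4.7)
\bigskip
\noindent\emph{Proof proposal.} The plan is to extend the bijective strategy used in \cite{DK19-2} for constant ground state paths to the periodic case. The starting point is the (KMN)$^2$ crystal base character formula
$$e^{-\lambda}\mathrm{ch}(L(\lambda)) = \sum_{\p\in \Pp(\lambda)} e^{\mathrm{wt}\p - \lambda},$$
so both sides of the theorem will be proved by exhibiting a weight-preserving bijection $\Phi : \Pp(\lambda) \to \,_t\Pppm$ (respectively $\Pp(\lambda) \times \Pi_d \to \,_t^d\Ppm$, where $\Pi_d$ denotes ordinary partitions with parts in $d\Z_{>0}$, whose generating function is $1/(q^d;q^d)_\infty$).

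The bijection $\Phi$ is built as follows. A $\lambda$-path $\p = (p_k)_{k\geq 0}$ eventually agrees with the ground state path $\bar g = (\ldots, g_0, g_1, \ldots, g_{t-1}, g_0, \ldots)$ of period $t$, so only finitely many terms differ from $\bar g$. I would read $\p$ from right to left: the ``tail'' provides the $t$ fixed smallest parts of the multi-grounded partition, coloured successively by $c_{g_{t-1}}, c_{g_{t-2}}, \ldots, c_{g_0}$, and each position $k$ where $p_k$ deviates from $\bar g$ yields one coloured part $(n_k, c_{p_k})$ whose integer value $n_k$ is the cumulative sum of the local energies $H(p_k \otimes p_{k+1})$ along the tail, normalised so that the fixed parts sit at the bottom of the partition. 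The difference conditions defining the order $\gg_{\cmu}$ on coloured integers are designed precisely to match the admissibility of consecutive pairs in the perfect crystal dictated by the energy function, so $\Phi$ maps $\Pp(\lambda)$ bijectively onto $_t\Pppm$.

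It then remains to verify the weight bookkeeping. Under the specialisations $q = e^{-\delta/(d_0 D)}$ and $c_b = e^{\wt b}$, the contribution of a path factorises as $q^{|\pi|} C(\pi)$, with $|\pi|$ tracking the $\delta$-component of $\mathrm{wt}\p$ and $C(\pi) = \prod_k c_{p_k}$ tracking the remaining root directions. The hypothesis $c_{g_0}\cdots c_{g_{t-1}} = 1$ is equivalent to the statement that one full period of the ground state path has weight a multiple of $\delta$, so the $t$ fixed parts contribute neutrally to $C(\pi)$ and the offset between $\mathrm{wt}\p$ and $\lambda$ is exactly absorbed by the normalisation. For the second identity, augmenting a multi-grounded partition by an arbitrary partition in $\Pi_d$ has the effect of shifting blocks of $t$ consecutive colours by $d$ uniformly, which is compatible with the $\cmu$-ordering and yields the extra factor $1/(q^d;q^d)_\infty$.

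The main obstacle will be confirming that $\Phi$ is both well-defined and surjective when $t > 1$: the ordering $\gg_{\cmu}$ on coloured integers depends cyclically on position modulo $t$, and the energy function $H$ no longer has a single ``reference value'' as in the constant case but rather a vector of reference values $H(g_i \otimes g_{i+1 \bmod t})$. The careful point is to show that minimality of an admissible sequence forces exactly the ground state tail (so $\Phi$ is injective with the correct image), and that the common denominator $D$ appearing in $q = e^{-\delta/(d_0 D)}$ is exactly the one needed to make all integer labels $n_k$ genuine integers across the $t$ phases of the cycle. Once this is checked, comparing coefficient by coefficient of $e^{\mathrm{wt}\p - \lambda}$ and $q^{|\pi|}C(\pi)$ completes the proof.
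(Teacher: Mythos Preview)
Your overall strategy---bijection from $\lambda$-paths to $_t\Pppm$, then a decomposition $_t^d\Ppm \simeq {}_t\Pppm \times \{\text{partitions into multiples of }d\}$---is exactly the paper's approach. But your description of the bijection has a real error that would prevent it from working.

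You write that ``each position $k$ where $p_k$ deviates from $\bar g$ yields one coloured part.'' This is not how the bijection goes, and cannot be: the set of indices where $p_k \neq g_k$ has no reason to have cardinality divisible by $t$, so you would not land in $_t\Pppm$. The paper instead takes the smallest $m$ such that the \emph{whole block} $(p_{(m-1)t},\ldots,p_{mt-1})$ differs from $(g_0,\ldots,g_{t-1})$, and then \emph{every} position $k\in\{0,\ldots,mt-1\}$ produces a part $\pi_k$ (coloured $c_{p_k}$), followed by the fixed tail $(u^{(0)}_{c_{g_0}},\ldots,u^{(t-1)}_{c_{g_{t-1}}})$. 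The condition ``last $t$-block of $\pi$ differs from the tail'' in the definition of multi-grounded partition is precisely what makes this invertible. The part sizes are not raw cumulative sums of $H$, but of the shifted function $DH_\lambda$ (where $H_\lambda = H - \tfrac{1}{t}\sum_k H(g_{k+1}\otimes g_k)$), plus a constant $-\tfrac{1}{t}\sum_\ell (\ell+1)DH_\lambda(g_{\ell+1}\otimes g_\ell)$; this normalisation is what forces $\sum u^{(k)}=0$ and is the reason $D$ must be chosen as it is.

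A second, smaller issue: you say ``the ordering $\gg_{\cmu}$ on coloured integers depends cyclically on position modulo $t$.'' It does not. The relations $\gtrdot$ and $\gg$ are fixed binary relations on $\Z_{\C_\B}$ (equations \eqref{eq:DHLeg}--\eqref{eq:DHLin}), independent of position; the periodicity enters only through the fixed $t$-part tail and the divisibility-by-$t$ constraint on the total length. Once you fix these two points, the weight bookkeeping and the second bijection go through essentially as you sketch.
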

One big advantage of Theorem \ref{theo:formchar2} is that one does not need to perform a specialisation, and that, being generating functions for combinatorial objects, the series always have obviously positive coefficients. Moreover, as we will see in Section \ref{sec:examples}, these generating functions are relatively easy to compute in practice.

As examples of application, we use our new method and Theorem \ref{theo:formchar2} to compute character formulas for irreducible highest weight level one modules of classical Lie algebras:
\begin{itemize}
\item $\Lambda_0$ for $A_{2n}^{(2)}(n\geq 2)$,
\item $\Lambda_0$ and $\Lambda_n$ for $D_{n+1}^{(2)}(n\geq 2)$,
\item $\Lambda_0, \Lambda_1$ for $A_{2n-1}^{(1)}(n\geq 3)$,
\item $\Lambda_0$ and $\Lambda_1$ for $B_{n}^{(1)}(n\geq 3)$,
\item $\Lambda_0, \Lambda_1, \Lambda_{n-1},\Lambda_n$ for $D_{n}^{(1)}(n\geq 4)$.
\end{itemize}
All these formulas are non-specialised, with obviously positive coefficients, and are either infinite products or sums of two infinite products. We restrict ourselves to examples of level $1$ in this paper for brevity, but the method applies in theory to any level.

The first two formulas were already proved by Frenkel and Kac \cite{FrenkelKac} by  constructing basic representations using vertex operators from the dual resonance theory in physics.
These identities were reproved by the second author in \cite{Konan_ww2}  using another method based on a generalisation of Glaisher's identity \cite{Glaisher} through a Sylvester-style bijection.
Nonetheless, we reprove them here to illustrate that our new method gives very simple proofs.

\begin{theo}[Frenkel--Kac]
\label{theo:chara2n2}
Let $n\geq 2$, and let $\Lambda_0, \dots, \Lambda_{n}$ be the fundamental weights  and $\alpha_0, \dots, \alpha_{n}$ be the simple roots of $A_{2n}^{(2)}$. Let $ \delta= 2\alpha_0 + \cdots + 2\alpha_{n-1}+\alpha_n$ be the null root.
Let us set
$$q=e^{-\delta/2} \quad \text{and} \quad c_i = e^{\alpha_i + \cdots + \alpha_{n-1} +\alpha_n/2} \text{ for all } i \in \{1,\dots , n \}.$$
We have
$$e^{-\Ll_0}\mathrm{ch}(L(\Lambda_0))= \prod_{k=1}^n (-c_k q;q^2)_{\infty} (-c_k^{-1}q;q^2)_{\infty}.$$
\end{theo}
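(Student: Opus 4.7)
The plan is to apply Theorem \ref{theo:formchar2} to the level $1$ standard module $L(\Lambda_0)$ of $A_{2n}^{(2)}$. The first step is to fix a level $1$ perfect crystal $\B$ for $A_{2n}^{(2)}$, together with the $\Lambda_0$-ground state path. A standard candidate is the $(2n+1)$-element crystal with elements indexed by $\{1, \dots, n, 0, \bar{n}, \dots, \bar{1}\}$, with weights chosen so that the colours match the $c_k = e^{\alpha_k + \cdots + \alpha_{n-1} + \alpha_n/2}$ appearing in the statement (and their inverses for the barred letters, with the middle element $0$ having trivial colour). I would then identify the period $t$ of the $\Lambda_0$-ground state path and the elements $g_0, \dots, g_{t-1}$ that appear in it, and verify that with this choice of weights one has $\cmu = 1$, as required by Theorem \ref{theo:formchar2}.

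Next, I would compute the energy function $H$ on $\B \otimes \B$, which can be read off from the combinatorial $R$-matrix or from the tables of \cite{KMN2a,KMN2b}. From this data one extracts the coloured partial order $\gg$ and hence the set $_t\Pppm$ of multi-grounded partitions whose generating function computes $e^{-\Lambda_0}\mathrm{ch}(L(\Lambda_0))$ via Theorem \ref{theo:formchar2}. The crucial feature at level $1$ is that $H$ takes only the values $0$ and $1$, so the difference conditions between consecutive parts reduce to simple constraints depending only on the pair of colours, and one expects these constraints to decouple across the $2n$ non-trivial colours $c_1^{\pm 1}, \dots, c_n^{\pm 1}$ once the $t$ fixed bottom parts are removed.

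The third step is to compute the resulting generating function explicitly. I anticipate that, after stripping the fixed ground layer, parts of each colour $c_k^{\pm 1}$ are forced into a single residue class modulo $2$ in their $q$-exponent, with strict difference conditions within a single colour, and no constraint at all between parts of distinct colours. This combinatorial picture corresponds exactly to distinct odd parts with two $q$-weighted tags per colour, whose generating function is $(-c_k q; q^2)_\infty (-c_k^{-1} q; q^2)_\infty$; multiplying over $k = 1, \dots, n$ gives the announced product.

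The main obstacle will be verifying this factorisation of the difference conditions: one must check, using the explicit energy matrix, that parts of different non-trivial colours impose no cross constraints on one another, so that the multi-grounded partition truly splits into $2n$ independent single-colour sub-partitions. If any genuine mixed constraint survived, the generating function would acquire cross terms and one would have to perform an additional summation rather than reading off a pure product. A secondary, more routine point is to match the normalisation $q = e^{-\delta/(d_0 D)}$ of Theorem \ref{theo:formchar2} with the $q = e^{-\delta/2}$ of the present statement by identifying $d_0 = 2$ and $D = 1$ from the expansion $\delta = 2\alpha_0 + \cdots + 2\alpha_{n-1} + \alpha_n$.
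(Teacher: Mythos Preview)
Your overall framework is right --- apply Theorem \ref{theo:formchar2} to the $(2n{+}1)$-element level $1$ perfect crystal $\B=\{1,\dots,n,0,\bar n,\dots,\bar 1\}$ --- and the normalisation $d_0=2$, $D=1$ is correct. Note also that the ground state path here is \emph{constant}, $\p_{\Lambda_0}=\cdots 0\,0\,0$, so $t=1$ and ordinary grounded partitions suffice. However, the mechanism you propose for obtaining the product is wrong, and the verification you flag as ``the main obstacle'' will indeed fail. The energy function does \emph{not} take only the values $0$ and $1$: one has $H(b\otimes b)=2$ for every $b\neq 0$, and the full matrix (normalised by $H(0\otimes 0)=0$) has entries $\{0,1,2\}$, with $0$'s and $2$'s filling the off-diagonal blocks among the non-zero letters. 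In particular, parts of distinct non-trivial colours \emph{do} constrain one another, so the grounded partition does not split into $2n$ independent single-colour pieces.

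What the paper does instead is apply Theorem \ref{theo:formchar2} with the extra parameter $d=2$. Every entry of $H$ has a definite parity, so the condition $\pi_k-\pi_{k+1}-H(p_{k+1}\otimes p_k)\in 2\Z_{\ge 0}$ forces all $c_0$-parts to be even and all other parts to be odd. Under this parity restriction the relation $\gg$ becomes a \emph{total} order on the admissible coloured integers,
\[
\cdots\ge 3_{c_1}\ge 2_{c_0}\ge 1_{c_{\bar 1}}\ge\cdots\ge 1_{c_{\bar n}}\ge 1_{c_n}\ge\cdots\ge 1_{c_1}\ge 0_{c_0},
\]
in which each non-$c_0$ entry occurs once and the even $c_0$-entries may repeat. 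Grounded partitions are then just finite subsequences of this chain, so their generating function is immediately
\[
\frac{1}{(q^2;q^2)_\infty}\prod_{k=1}^n(-c_kq;q^2)_\infty(-c_k^{-1}q;q^2)_\infty,
\]
the denominator coming from the repeatable $c_0$-parts. This cancels the $(q^2;q^2)_\infty^{-1}$ from Theorem \ref{theo:formchar2} (with $d=2$), giving the stated product. Thus the product arises not from independence across colours but from the fact that selecting a subset of a totally ordered set is counted by a product.
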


\begin{theo}[Frenkel--Kac]
\label{theo:chardn2}
Let $n\geq 2$, and let $\Lambda_0, \dots, \Lambda_{n}$ be the fundamental weights  and $\alpha_0, \dots, \alpha_{n}$ be the simple roots of $D_{n+1}^{(2)}$.
Let $\delta= \alpha_0+\cdots+\alpha_n$ be the null root.
Let us set
$$q=e^{-\delta} \quad \text{and} \quad c_i = e^{\alpha_i + \cdots +\alpha_n} \text{ for all } i \in \{1,\dots , n \}.$$
We have 
\begin{align}
e^{-\Ll_0}\mathrm{ch}(L(\Ll_0)) &= \frac{1}{(q;q^2)_{\infty}} \prod_{k=1}^n (-c_k q;q^2)_{\infty} (-c_k^{-1}q;q^2)_{\infty}, \label{eq:d21}
\\e^{-\Ll_n}\mathrm{ch}(L(\Ll_n)) &= \frac{1}{(q;q^2)_{\infty}} \prod_{k=1}^n (-c_k q^2;q^2)_{\infty} (-c_k^{-1};q^2)_{\infty}. \label{eq:d22}
\end{align}
\end{theo}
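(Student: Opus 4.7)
The plan is to apply \Thm{theo:formchar2} to the two level-one modules $L(\Lambda_0)$ and $L(\Lambda_n)$ of $D_{n+1}^{(2)}$ and to compute the two resulting generating functions over multi-grounded partitions directly.

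First I would fix the standard level-one perfect crystal $\B$ for $D_{n+1}^{(2)}$, namely the $(2n+1)$-element crystal whose underlying set may be written as $\{1,\ldots,n,0,\overline n,\ldots,\overline 1\}$, with weights $\mathrm{wt}(i)=\epsilon_i$, $\mathrm{wt}(\overline i)=-\epsilon_i$, $\mathrm{wt}(0)=0$, and then I would recall its energy function on $\B\otimes\B$ from the theory of perfect crystals \cite{KMN2a,KMN2b}. From this data I would identify the ground state paths of $L(\Lambda_0)$ and $L(\Lambda_n)$, together with their periods $t$ in the sense of \Sct{sec:multiground}: the former is the constant path with every letter equal to $0$, while the latter is genuinely non-constant, so the multi-grounded setup is needed in its full strength for $L(\Lambda_n)$.

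With the colours normalised as $c_i=e^{\alpha_i+\cdots+\alpha_n}$ as in the statement, I would translate the order $\gg$ coming from the energy function into concrete difference conditions on the coloured parts. In both cases the colours split into three families: the colour attached to $0$, which is free and contributes the factor $1/(q;q^2)_\infty$; the $n$ positive colours $c_k$; and the $n$ negative colours $c_k^{-1}$. For every non-zero colour the admissible parts should form a strictly increasing sequence with minimal gap $2$, and the smallest allowed value is dictated by the ground state path. A direct calculation of these minimal values from the energy function should yield shift $1$ for every non-zero colour in the $\Lambda_0$ case, and shifts $(2,0)$ for each pair $(c_k,c_k^{-1})$ in the $\Lambda_n$ case.

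The main obstacle will be the bookkeeping of these minimal exponents for $L(\Lambda_n)$, since the non-constant ground state path is precisely what produces the asymmetry between $c_k$ and $c_k^{-1}$ visible in \eqref{eq:d22}. Once that is carried out, the three colour families contribute independently and a single application of Euler's identity $\sum_{k\geq 0} x^k q^{k(k-1)+ks}/(q^2;q^2)_k = (-xq^s;q^2)_\infty$ for every non-zero colour reassembles the generating function into the advertised products \eqref{eq:d21} and \eqref{eq:d22}, with no specialisation required.
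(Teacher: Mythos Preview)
Your proposal contains a concrete error in the identification of the level-one perfect crystal for $D_{n+1}^{(2)}$. The vector representation crystal here has $2n+2$ elements, not $2n+1$: in addition to $1,\ldots,n,\overline n,\ldots,\overline 1$ and $0$ there is a second weight-zero vertex $\overline 0$, with $\varphi(\overline 0)=\varepsilon(\overline 0)=\Lambda_n$. Consequently the ground state path of $L(\Lambda_n)$ is the \emph{constant} path $\p_{\Lambda_n}=\cdots\overline 0\,\overline 0\,\overline 0$, not a non-constant one. So the multi-grounded machinery with $t>1$ is not what produces the asymmetry in \eqref{eq:d22}; both modules are handled with $t=1$ and the refinement parameter $d=2$ of \Thm{theo:formchar2}.

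The paper's argument runs as follows. With $t=D=1$ and $d=2$, the grounded partitions in $_1^2\Pp^{\gg}_{c_0}$ (respectively $_1^2\Pp^{\gg}_{c_{\overline 0}}$) are read off as finite subsequences of a single totally ordered chain in which the parts coloured $c_0$ and $c_{\overline 0}$ may repeat while the other $2n$ colours appear distinctly. The repeating colours contribute two geometric factors which, after setting $c_0=c_{\overline 0}=1$, combine with the $(q^2;q^2)_\infty$ from \Thm{theo:formchar2} to give $1/(q;q^2)_\infty$; the remaining $2n$ colours contribute the products $\prod_k(-c_kq;q^2)_\infty(-c_k^{-1}q;q^2)_\infty$ or their shifted analogues. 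The asymmetry between $c_k$ and $c_k^{-1}$ in \eqref{eq:d22} arises simply because grounding at $0_{c_{\overline 0}}$ instead of $0_{c_0}$ shifts the parities of the allowed part sizes along the chain, not from any periodicity of the ground state path. Your Euler-identity endgame would still work once the correct crystal and energy matrix are in place, but the ``main obstacle'' you flag does not exist in the form you describe.
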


We now turn to character formulas which, to our knowledge, are new. They rely on the parity of the number of parts, we therefore introduce some notation before stating them.

Let $G=G(x_1, \dots, x_n)$ be a power series in several variables $x_1, \dots, x_n.$ For $k \leq n$, we denote by $\E_{x_1, \dots , x_k} (G)$ the sub-series of $G$ where we only keep the terms in which the sum of the powers of $x_1, \dots , x_k$ is even.
Note that if $G$ has only positive coefficients, then the same is true for $\E_{x_1, \dots , x_k} (G)$ for all $k$. There is a simple formula to obtain $\E_{x_1, \dots , x_k} (G)$ from $G$:

\begin{equation}
\label{eq:evennumberparts}
\E_{x_1, \dots , x_k} (G)= \frac{1}{2} \Big(G(x_1, \dots, x_k, x_{k+1}, \dots , x_n) + G(-x_1, \dots, -x_k, x_{k+1}, \dots , x_n)\Big).
\end{equation}

We can now state our character formulas in a simple form.

\begin{theo}\label{theo:chara2n1}
Let $n\geq 3$, and let $\Lambda_0, \dots, \Lambda_{n}$ be the fundamental weights  and $\alpha_0, \dots, \alpha_{n}$ be the simple roots of $A_{2n-1}^{(2)}$.
Let $\delta= \alpha_0+\alpha_1+2\alpha_2\cdots+2\alpha_{n-1}+\alpha_n$ be the null root.
Let us set
$$q=e^{-\delta/2} \quad \text{and} \quad c_i = e^{\alpha_i + \cdots + \alpha_{n-1} +\alpha_n/2} \text{ for all } i \in \{1,\dots , n \}.$$
We have
\begin{align}
e^{-\Ll_0}\mathrm{ch}(L(\Ll_0))&= \E_{c_1, \dots , c_n} \left( (q^2;q^4)_{\infty} \prod_{k=1}^n (-c_k q;q^2)_{\infty} (-c_k^{-1}q;q^2)_{\infty}\right)\nonumber \\ 
&= \frac{(q^2;q^4)_{\infty}}{2} \left(\prod_{k=1}^n (-c_k q;q^2)_{\infty} (-c_k^{-1}q;q^2)_{\infty}+\prod_{k=1}^n (c_k q;q^2)_{\infty} (c_k^{-1}q;q^2)_{\infty}\right), \label{eq:a210}
\\e^{-\Ll_1}\mathrm{ch}(L(\Ll_1))&= \E_{c_1, \dots , c_n} \left( (q^2;q^4)_{\infty} (-c_1 q^3;q^2)_{\infty} (-c_1^{-1}q^{-1};q^2)_{\infty} \prod_{k=2}^n (-c_k q;q^2)_{\infty} (-c_k^{-1}q;q^2)_{\infty}\right) \nonumber\\
&= \frac{(q^2;q^4)_{\infty}}{2} \left((-c_1 q^3;q^2)_{\infty} (-c_1^{-1}q^{-1};q^2)_{\infty} \prod_{k=2}^n (-c_k q;q^2)_{\infty} (-c_k^{-1}q;q^2)_{\infty} \right. \label{eq:a211}\\
& \left.\qquad \qquad \qquad +(c_1 q^3;q^2)_{\infty} (c_1^{-1}q^{-1};q^2)_{\infty} \prod_{k=2}^n (c_k q;q^2)_{\infty} (c_k^{-1}q;q^2)_{\infty}\right). \nonumber
\end{align}
\end{theo}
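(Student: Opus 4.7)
The plan is to apply \Thm{theo:formchar2} to the level $1$ perfect crystal $\B$ of $A_{2n-1}^{(2)}$. I would begin by recalling the explicit description of this crystal: it has $2n$ coloured elements whose weights realise the monomials $c_k$ and $c_k^{-1}$ (for $k=1,\dots,n$) together with the Kashiwara operators and the energy function $H\colon\B\otimes\B\to\Z$ that are standard for this family. The second preliminary step is to identify the ground state paths $\mathbf{g}^{\Ll_0}$ and $\mathbf{g}^{\Ll_1}$; unlike in the Frenkel--Kac cases of \Thm{theo:chara2n2} and \Thm{theo:chardn2}, these are \emph{not} constant but have period $t=2$, which is exactly what forces us to use the multi-grounded framework and makes the normalisation condition $c_{g_0}c_{g_1}=1$ meaningful.

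The heart of the argument is to translate the difference conditions coming from the energy function into a direct combinatorial description of the multi-grounded partitions in $_2\Ppm$. One expects each such partition to decompose as a pair $(\pi^{c},\pi^{0})$, where $\pi^{0}$ is a partition into distinct parts lying in an arithmetic progression giving the factor $(q^2;q^4)_{\infty}$, and $\pi^{c}$ is built from distinct coloured parts of the form $c_k^{\pm 1}q^{j}$ with $j$ in a prescribed residue class. For $\Ll_0$ the admissible coloured parts should be $c_k^{\pm 1}q^{2j+1}$ with $j\ge 0$ and $k=1,\dots,n$, producing $\prod_{k=1}^{n}(-c_k q;q^2)_{\infty}(-c_k^{-1}q;q^2)_{\infty}$; for $\Ll_1$ the shift of the ground state should only change the allowed range of the $c_1$-coloured parts, yielding the translated factor $(-c_1q^3;q^2)_{\infty}(-c_1^{-1}q^{-1};q^2)_{\infty}$ while the other colours contribute as in the $\Ll_0$ case.

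Finally, I would impose the global identity $c_{g_0}c_{g_1}=1$, which, combined with the fact that the $\pi^{0}$-part carries no colour, forces the total exponent of $c_1,\dots,c_n$ in the colour sequence $C(\pi)$ of any valid multi-grounded partition to be even. This is precisely the defining property of the operator $\E_{c_1,\dots,c_n}$, and applying the identity \eqref{eq:evennumberparts} converts the generating function into the symmetrised half-sum that appears in \eqref{eq:a210} and \eqref{eq:a211}. The main obstacle will be the careful combinatorial analysis of the energy function, in particular the $\Ll_1$ case: one must check that shifting the ground state path only translates the allowed range of the $c_1$-coloured parts and does not introduce unexpected interactions between the different colours. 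Once this is settled, the closed product form follows immediately from the bijection of \Thm{theo:formchar2} and standard $q$-series manipulations.
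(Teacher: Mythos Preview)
Your overall strategy is the paper's: apply \Thm{theo:formchar2} to the level~$1$ perfect crystal of $A_{2n-1}^{(2)}$, observe that the ground state paths for $\Ll_0$ and $\Ll_1$ have period $t=2$, and recover the operator $\E_{c_1,\dots,c_n}$ from the condition that the number of parts is even (which, since every vertex of $\B$ has weight $\pm c_k$, is equivalent to the total $c$-exponent being even). That part is fine.

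The combinatorial core, however, is wrong. The crystal $\B$ for $A_{2n-1}^{(2)}$ has \emph{no} zero-weight vertex: its elements are $1,\dots,n,\bar n,\dots,\bar 1$ only. There is therefore no colourless piece $\pi^{0}$, and no set of partitions produces the factor $(q^2;q^4)_\infty$ directly --- indeed $(q^2;q^4)_\infty=\prod_{k\ge0}(1-q^{4k+2})$ is not a generating function with non-negative coefficients at all. In the paper this factor appears only after the fact, as the ratio $(q^2;q^2)_\infty/(q^4;q^4)_\infty$: the numerator is the $(q^d;q^d)_\infty$ supplied by \Thm{theo:formchar2} with $d=2$, and the denominator $(q^4;q^4)_\infty$ comes, once one sets $c_{\bar 1}c_1=1$, from the alternating subsequences
\[
\cdots\ll(2k-1)_{c_{\bar 1}}\ll(2k+1)_{c_1}\ll(2k-1)_{c_{\bar 1}}\ll\cdots
\]
inside a multi-grounded partition. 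These subsequences are precisely why your claim that ``$\pi^{c}$ is built from distinct coloured parts'' fails: parts coloured $c_1$ and $c_{\bar 1}$ \emph{can} repeat, and the pairs $((2k-1)_{c_{\bar 1}},(2k+1)_{c_1})$ contribute $1/(c_{\bar 1}c_1q^4;q^4)_\infty$. If you assume distinctness you will miss this denominator and your generating function will not match the character. So the step to revise is the structure of the multi-grounded partitions themselves: drop the $(\pi^{c},\pi^{0})$ decomposition, analyse the partial order on coloured odd integers induced by $2H_\lambda$, and track the repeatable $c_1/c_{\bar 1}$ pairs explicitly; the $(q^2;q^4)_\infty$ will then fall out of the final algebra rather than from a combinatorial piece.
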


The next theorem concerns the Lie algebra $B_{n}^{(1)}$. Note that the second author proved a character formula for $L(\Ll_n)$, another level $1$ module, in \cite{Konan_ww2}. However we do not reprove it here as it can be easily proved using the character formula \eqref{eq:groundchar} of \cite{DK19-2} and does not need any of the innovations of the current paper. However, the character formulas for the modules $L(\Ll_0)$ and $L(\Ll_1)$ are derived using the new tools from Theorem \ref{theo:formchar2}.
\begin{theo}\label{theo:charbn1}
Let $n\geq 3$, and let $\Lambda_0, \dots, \Lambda_{n}$ be the fundamental weights  and $\alpha_0, \dots, \alpha_{n}$ be the simple roots of $B_{n}^{(1)}$.
Let $\delta= \alpha_0+\alpha_1+2\alpha_2\cdots+2\alpha_n$ be the null root.
Let us set
$$q=e^{-\delta/2}, \quad c_0=1, \quad \text{and} \quad c_i = e^{\alpha_i + \cdots + \alpha_{n-1} +\alpha_n} \text{ for all } i \in \{1,\dots , n \}.$$
We have
\begin{align}
e^{-\Ll_0}\mathrm{ch}(L(\Ll_0))&= \E_{c_0,c_1, \dots , c_n}  \left((-c_0q;q^2)_{\infty} \prod_{k=1}^n (-c_k q;q^2)_{\infty} (-c_k^{-1}q;q^2)_{\infty} \right) \label{eq:b0}
\\&= \frac{1}{2} \left((-q;q^2)_{\infty} \prod_{k=1}^n (-c_k q;q^2)_{\infty} (-c_k^{-1}q;q^2)_{\infty}+ (q;q^2)_{\infty}\prod_{k=1}^n (c_k q;q^2)_{\infty} (c_k^{-1}q;q^2)_{\infty}\right), \nonumber
\\e^{-\Ll_1}\mathrm{ch}(L(\Ll_1))&= \E_{c_0,c_1, \dots , c_n}  \left((-c_0q;q^2)_{\infty} (-c_1 q^3;q^2)_{\infty} (-c_1^{-1}q^{-1};q^2)_{\infty} \prod_{k=2}^n (-c_k q;q^2)_{\infty} (-c_k^{-1}q;q^2)_{\infty} \right) \nonumber
\\&= \frac{1}{2} \left((-q;q^2)_{\infty}(-c_1 q^3;q^2)_{\infty} (-c_1^{-1}q^{-1};q^2)_{\infty} \prod_{k=2}^n (-c_k q;q^2)_{\infty} (-c_k^{-1}q;q^2)_{\infty} \right. \label{eq:b1}\\
& \left.\qquad \quad + (q;q^2)_{\infty}(c_1 q^3;q^2)_{\infty} (c_1^{-1}q^{-1};q^2)_{\infty} \prod_{k=2}^n (c_k q;q^2)_{\infty} (c_k^{-1}q;q^2)_{\infty}\right). \nonumber
\end{align}
\end{theo}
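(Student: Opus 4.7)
The plan is to apply Theorem~\ref{theo:formchar2} to the level~$1$ perfect crystal $\B$ of $B_n^{(1)}$ associated with the vector representation, which has $2n+1$ elements that I label $1,\ldots,n,0,\bar n,\ldots,\bar 1$. The weights are matched with the variables of the statement: the pair $k,\bar k$ carries colors $c_k$ and $c_k^{-1}$ respectively, while the null element $0$ has zero weight but is tracked by a formal variable $c_0$ that is eventually specialised to $1$. The feature distinguishing this case from $L(\Ll_n)$, which was handled in \cite{Konan_ww2} using the constant ground state framework of \cite{DK19-2}, is that the ground state paths attached to $\Ll_0$ and $\Ll_1$ are periodic of period $t=2$ rather than constant, so the multi-grounded formalism developed in Section~\ref{sec:multiground} is genuinely required.

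First, I would identify the two ground state paths. For $L(\Ll_0)$, the path should oscillate between a pair $(g_0,g_1)$ of crystal elements with $c_{g_0}c_{g_1}=1$; for $L(\Ll_1)$, the path is anchored at a shifted pair, in such a way that after substitution the factors $(-c_1q;q^2)_\infty(-c_1^{-1}q;q^2)_\infty$ become $(-c_1 q^3;q^2)_\infty(-c_1^{-1}q^{-1};q^2)_\infty$. Then I would compute the energy function $H$ on $\B\otimes\B$ and translate the induced order on colored parts into explicit minimal-gap conditions between the consecutive parts of a multi-grounded partition.

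Next, Theorem~\ref{theo:formchar2} rewrites $e^{-\Ll_i}\mathrm{ch}(L(\Ll_i))$ as the generating function $\sum_{\pi\in\,_2\Pppm} C(\pi)q^{|\pi|}$ of $2$-multi-grounded partitions anchored at $(g_0,g_1)$. The central computation is then to show that, after the standard shift associated to $H$, the difference conditions decouple across the $2n+1$ colors and impose a \emph{distinct parts with gap $2$} condition on each color. This decoupling is what produces the infinite products $(-c_0q;q^2)_\infty\prod_{k=1}^n(-c_kq;q^2)_\infty(-c_k^{-1}q;q^2)_\infty$ (and the corresponding $L(\Ll_1)$ variant, with the shift localised on the color $c_1$).

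Finally, the operator $\E_{c_0,c_1,\ldots,c_n}$ appears because the periodicity of the ground state enforces a global parity constraint: in order for a path to terminate on a pair $(g_0,g_1)$ with $c_{g_0}c_{g_1}=1$, the total exponent of $c_0,c_1,\ldots,c_n$ contributed by the non-ground portion of the path (with signs inherited from the bars) must be even. Using \eqref{eq:evennumberparts} then yields the two half-sum expressions \eqref{eq:b0} and \eqref{eq:b1}. The main obstacle will be the explicit identification of the pairs $(g_0,g_1)$ and the precise form of $H$, since these dictate simultaneously the gap conditions and the parity constraint; the $L(\Ll_1)$ case is particularly delicate because the shifts on the color $c_1$ must turn $q$ into $q^3$ and $q^{-1}$ simultaneously, and a sign or exponent mistake there immediately breaks the identity.
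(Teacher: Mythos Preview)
Your overall strategy matches the paper's: apply Theorem~\ref{theo:formchar2} to the vector-representation crystal of $B_n^{(1)}$, use the period-$2$ ground state paths $\p_{\Ll_0}=\cdots\bar 1\,1\,\bar 1$ and $\p_{\Ll_1}=\cdots 1\,\bar 1\,1$, and then extract the even-part-count subseries via \eqref{eq:evennumberparts}. Your identification of the parity constraint with the operator $\E_{c_0,c_1,\ldots,c_n}$ is correct, since the parity of the total signed exponent in the $c_i$'s coincides with the parity of the number of parts.

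The gap is in your ``central computation''. The claim that the energy conditions decouple into a \emph{distinct parts with gap $2$ condition on each color}, producing directly the product $(-c_0q;q^2)_\infty\prod_k(-c_kq;q^2)_\infty(-c_k^{-1}q;q^2)_\infty$, is not what actually happens. Two features of the energy matrix~\eqref{eq:energyB} prevent such a clean decoupling. First, $H(0\otimes 0)=0$, so parts coloured $c_0$ may \emph{repeat} arbitrarily, contributing a denominator $1/(c_0q;q^2)_\infty$ rather than a numerator $(-c_0q;q^2)_\infty$. Second, $H(1\otimes\bar 1)=-1$ allows alternating subsequences $\cdots(2k-1)_{c_{\bar 1}},(2k+1)_{c_1},(2k-1)_{c_{\bar 1}}\cdots$, which contribute a further denominator $1/(c_{\bar 1}c_1q^4;q^4)_\infty$. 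The paper handles this by working with the relation $\gg$ and $d=2$ (hence with ${}_2^2\Ppm$ rather than your ${}_2\Pppm$), first computing the generating function for $A_{2n-1}^{(2)}$ in Section~\ref{Sec:A2n-1}, then observing that the $B_n^{(1)}$ energy is obtained by inserting the $0$-row and $0$-column. Only \emph{after} specialising $c_0=1$ and $c_{\bar 1}c_1=1$ do these denominators combine with the $(q^2;q^2)_\infty$ from Theorem~\ref{theo:formchar2} to yield the stated products. Your sketch would need to be revised to account for these repeating/alternating subsequences and the resulting denominator factors; without them the intermediate generating function you write down is incorrect, even though the final answer is right.
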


We conclude with the four level $1$ standard modules of $D_{n}^{(1)}$.
\begin{theo}\label{theo:chardn1}
Let $n\geq 4$, and let $\Lambda_0, \dots, \Lambda_{n}$ be the fundamental weights  and $\alpha_0, \dots, \alpha_{n}$ be the simple roots of $D_{n}^{(1)}$.
Let $\delta= \alpha_0+\alpha_1+2\alpha_2\cdots+2\alpha_{n-2}+\alpha_{n-1}+\alpha_n$ is the null root.
Let us set
$$q=e^{-\delta/2} \quad \text{and} \quad c_i = e^{\alpha_i + \cdots + \alpha_{n-2} +\alpha_{n-1}/2 +\alpha_n/2} \text{ for all } i \in \{1,\dots , n \}.$$
We have
\begin{align}
e^{-\Ll_0}\mathrm{ch}(L(\Ll_0))&= \E_{c_1, \dots , c_n} \left(\prod_{k=1}^n (-c_k q;q^2)_{\infty} (-c_k^{-1}q;q^2)_{\infty}\right), \label{eq:d10}
\\e^{-\Ll_1}\mathrm{ch}(L(\Ll_1))&= \E_{c_1, \dots , c_n} \left((-c_1 q^3;q^2)_{\infty} (-c_1^{-1}q^{-1};q^2)_{\infty} \prod_{k=2}^n (-c_k q;q^2)_{\infty} (-c_k^{-1}q;q^2)_{\infty}\right), \label{eq:d11}
\\e^{-\Ll_{n-1}}\mathrm{ch}(L(\Ll_{n-1})) &= \E_{c_1, \dots , c_n} \left((-c_n;q^2)_{\infty} (-c_n^{-1}q^2;q^2)_{\infty} \prod_{k=1}^{n-1} (-c_k q^2;q^2)_{\infty} (-c_k^{-1};q^2)_{\infty}\right), \label{eq:d1n-1}
\\e^{-\Ll_n}\mathrm{ch}(L(\Ll_n)) &= \E_{c_1, \dots , c_n} \left((-c_n q^2;q^2)_{\infty} (-c_n^{-1};q^2)_{\infty} \prod_{k=1}^{n-1} (-c_k q^2;q^2)_{\infty} (-c_k^{-1};q^2)_{\infty}\right). \label{eq:d1n}
\end{align}
\end{theo}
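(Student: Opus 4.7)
The plan is to apply \Thm{theo:formchar2} to the standard level-$1$ perfect crystal $\B$ of $D_n^{(1)}$, which has $2n$ elements naturally indexed by $\{(1),\dots,(n),(\overline n),\dots,(\overline 1)\}$ with weights corresponding under the substitutions of the theorem to $c_k^{\pm 1}$, $k=1,\dots,n$. First I would write down the energy function $H:\B\otimes\B\to\Z$ from \cite{KMN2a}, and identify, for each of the four fundamental weights $\Lambda_0,\Lambda_1,\Lambda_{n-1},\Lambda_n$, the corresponding ground state path. In each of the four cases this path is periodic of period $t=2$ (the two entries are related by the relevant Dynkin diagram automorphism), which is precisely why the multi-grounded framework of \Thm{theo:formchar2}, rather than the constant-ground-state formula \eqref{eq:groundchar}, is needed.

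Once the ground state $g_1\otimes g_0$ is fixed, the next step is to analyse the associated multi-grounded partitions. The combinatorial structure of the $D_n^{(1)}$ energy function implies that the difference conditions decouple by colour: a multi-grounded partition becomes a family of $2n$ essentially independent ordinary partitions, one per colour $c_k^{\pm 1}$, each supported on a fixed residue class modulo $2$ determined by the ground state. Summing over such families gives, before any parity restriction, a raw product of the form
\begin{equation*}
\prod_{k=1}^n (-c_k q^{a_k};q^2)_{\infty}(-c_k^{-1}q^{b_k};q^2)_{\infty},
\end{equation*}
where the exponents $a_k,b_k\in\{-1,0,1,2,3\}$ are read off from the ground state. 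The substitution $q=e^{-\delta/2}$, which corresponds to $d_0 D=2$ in \Thm{theo:formchar2}, produces the $q^2$ bases appearing in the theorem, and the shifts away from $(a_k,b_k)=(1,1)$ in the formulas for $L(\Lambda_1), L(\Lambda_{n-1}), L(\Lambda_n)$ arise from the single colour coordinate in which the ground state differs from that of $L(\Lambda_0)$.

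The final ingredient is the evenness operator $\E_{c_1,\dots,c_n}$. It arises because the multi-grounded partition must extend the period-$2$ ground state in blocks compatible with that periodicity, which forces the total exponent in the colour variables $c_1,\dots,c_n$ to be even. Extracting this even part via \eqref{eq:evennumberparts} then converts the raw product into the two-term averaged expression given in the statement. The main technical obstacle, as with the other theorems of Section \ref{sec:examples}, is the explicit description of the perfect crystal, its energy function, and the four ground state paths, together with the careful bookkeeping of the shifts $(a_k,b_k)$ in each case; once these are pinned down, the decoupling into independent single-colour partitions and the parity projection are essentially automatic, and are in fact simpler than in \Thm{theo:charbn1} because $D_n^{(1)}$ has no short simple roots and thus no extra ``$c_0$'' factor to track.
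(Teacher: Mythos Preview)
Your overall architecture is right: apply \Thm{theo:formchar2} to the $2n$-element level-$1$ perfect crystal of $D_n^{(1)}$, use the period-$2$ ground state paths, and extract the even-number-of-parts subseries via \eqref{eq:evennumberparts} to produce $\E_{c_1,\dots,c_n}$. The parity projection is also correctly explained.

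However, the central combinatorial claim --- that ``the difference conditions decouple by colour'' so that a multi-grounded partition is ``a family of $2n$ essentially independent ordinary partitions, one per colour'' --- is not how the product arises and does not follow from the energy function. The energy matrix for $D_n^{(1)}$ (almost identical to \eqref{eq:Ha2}, with only $H(\overline n\otimes n)$ changed to $0$) induces a chain-like partial order on coloured integers, not a colour-wise decoupling. For colours $c_2,\dots,c_{n-1},c_{\overline 2},\dots,c_{\overline{n-1}}$, each coloured integer can occur \emph{at most once}, giving numerator factors $(-c_k q^{\cdot};q^2)_\infty$ (distinct parts, not ordinary partitions). For the pairs $(c_1,c_{\overline 1})$ and $(c_n,c_{\overline n})$, the incomparabilities allow alternating repeating blocks such as $\cdots(2k-1)_{c_{\overline 1}}\,(2k+1)_{c_1}\,(2k-1)_{c_{\overline 1}}\cdots$, which contribute \emph{denominator} factors $1/(c_1c_{\overline 1}q^4;q^4)_\infty$ and $1/(c_nc_{\overline n}q^2;q^4)_\infty$. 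These only disappear because $c_kc_{\overline k}=1$, whereupon they combine with $1/(q^2;q^2)_\infty$ from \Thm{theo:formchar2}. Without this step the raw generating function is not the product you wrote.

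Two smaller points. First, for $\Lambda_{n-1}$ and $\Lambda_n$ the paper takes $D=1$, $d=1$ (since $H(\overline n\otimes n)=H(n\otimes\overline n)=0$ gives $u^{(0)}=u^{(1)}=0$), working with $q=e^{-\delta}$ and squaring at the end; your uniform ``$d_0D=2$'' is not what is used there. Second, the ground state for $\Lambda_{n-1},\Lambda_n$ is $\cdots n\,\overline n\,n\,\overline n$, not a one-coordinate perturbation of $\cdots 1\,\overline 1\,1\,\overline 1$; correspondingly \emph{all} of the exponents $(a_k,b_k)$ shift (to $(2,0)$ for $k<n$), not just one, so the bookkeeping you describe would not produce \eqref{eq:d1n-1} and \eqref{eq:d1n}.
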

Note that these character formulas for $A_{2n}^{(2)}$, $B_n^{(1)}$, and $D_{n+1}^{(2)}$ are reminiscent of the specialised character formulas given by Bernard and Thierry-Mieg using string functions in \cite{BernardMieg}. Formulas of the same kind, but involving only products (not sums of products), can be found in Wakimoto's book \cite{WakimotoBook}.

\medskip
The paper is structured as follows. In Section \ref{sec:ground}, we recall some basics on affine Lie algebras, perfect crystals, and the theory of grounded partitions introduced in \cite{DK19-2}. In Section \ref{sec:multiground}, we introduce multi-grounded partitions and prove Theorem \ref{theo:formchar2}. In Section \ref{sec:examples}, we use our new theory to prove the character formulas of Theorems \ref{theo:chara2n2}--\ref{theo:chardn1}.

\section{Perfect crystals and grounded partitions}
\label{sec:ground}
In this section, we briefly recall the connection between grounded partitions and characters of Lie algebra modules whose ground state path is constant, introduced in our previous paper \cite{DK19-2}.
Here we only recall the major definitions. For a more detailed introduction, we refer the reader to the book of Hong and Kang \cite{HK} or to our previous paper \cite{DK19-2}. Throughout this paper, we follow the notation of \cite{HK}.

\subsection{Affine Lie algebras and character formulas}
\label{sec:lie}
We start by recalling some basic definitions on affine Lie algebras.

Let $\gf =\gf(A)$ be a Kac--Moody affine Lie algebra with generalised Cartan matrix $A = \bigl( a_{i,j}\bigr)_{i,j\in \I}$.
Let $\h $ be the Cartan subalgebra of $\gf$ and $\h^*$ be its dual. We have $ \h= \mathbb C \ot_{\Z} P^\vee$, where
$P^\vee = \Z h_0 \oplus \Z h_1 \oplus \cdots \oplus \Z h_{n-1}
\oplus \Z d$ with $h_0, \dots , h_{n-1}, d$ linearly independent. $P^\vee$ is called the \textit{dual weight lattice}.
Define linear functionals $\alpha_i$ and $\Lambda_i$ ($i \in \I$)
on $\h$ such that
\begin{equation*}
\begin{array}{cccc} \langle h_j,\alpha_i \rangle: =  \alpha_i(h_j) = a_{j,i}& \qquad &\langle d, \alpha_i \rangle: = \alpha_i(d) = \delta_{i,0}& \\
\langle h_j, \Lambda_i \rangle: = \Lambda_i(h_j) = \delta_{i,j}& \qquad &\langle d, \Lambda_i \rangle: = \Lambda_i(d) = 0 &\quad (i,j\in \I). \end{array}
\end{equation*}
The set $\Pi:= \{\alpha_i \mid i \in \I\} \subset \h^*$ is the set of simple roots,
and $\Pi^\vee := \{h_i \mid i \in \I\}\subset \h$ is the set of \textit{simple coroots}. 
We also set
$P := \{\lambda \in \mathfrak h^* \mid \lambda(P^\vee) \subset \Z\}$
to be the \textit{weight lattice}. It contains the set of {\it dominant integral weights}
$P^+ := \{\lambda \in P \mid \lambda(h_i) \in \Z_{ \geq 0} \text{ for all } i \in \I\}.$

The quintuple $(A,\Pi,\Pi^\vee,P,P^\vee)$ is called the \textit{Cartan datum} of $\gf$.

We also define the \textit{coroot lattice}
$\bar P^\vee := \Z h_0 \oplus \Z h_1 \oplus \cdots \oplus \Z h_{n-1},$
 and its complexification $\bar \h = \mathbb C \ot_{\Z}\bar P^\vee$.  The
 $\Z$-submodule
$\bar P:=\Z \Lambda_0 \oplus \Z \Lambda_1 \oplus \cdots \oplus \Z \Lambda_{n-1}$
of $P$ is called the lattice of {\it classical weights}.

Let $\bar P^+ : = \sum_{i= 0}^n  \Z_{ \geq 0} \Lambda_i$ be the set of \textit{dominant weights}.

\m
The center 
$\Z c = \{h\in P^\vee: \langle h,\alpha_i \rangle = 0 \text{ for all i } \in \I\}$
of $\gf$
is one-dimensional, generated by the {\it canonical central element}
$c=c_0 h_0 + \cdots + c_{n-1} h_{n-1}.$
The space of imaginary roots $ \Z \delta = \{\lambda\in P: \langle h_i,\lambda \rangle = 0 \text{ for all } i\in \I\}$
of $\gf$ is also one-dimensional, generated by the {\it null root} $\delta = d_0 \alpha_0 + d_1 \alpha_1 + \cdots + d_{n-1} \alpha_{n-1}.$

The {\it level} of a dominant weight $\lambda \in P^+$ is the integer $\ell$ such that
$\langle c, \lambda \rangle = \ell$.  We denote by $P_{\ell}$ (resp. $P_{\ell}^+$) the set of weights (resp. dominant weights) of level $\ell$.

\m

Let $U_q(\gf)$ (resp. $U_q'(\gf)$) be the {\it quantum affine algebra} (resp. \textit{derived quantum affine algebra}) associated to $\gf$.
Let $M$ be an integrable $U_q(\gf)$-module. It has a weight space decomposition $M = \bigoplus_{\lambda \in P} M_\lambda$,  where
$M_\lambda = \{v \in M \mid q^h\cdot v = q^{\lambda(h)}v$ for all $h \in P^\vee \}$. Assuming that $\dim M_{\lambda}<\infty$ for all $\lambda\in \mathrm{wt}(M)$, the \textit{character} of $M$ is defined by 
\begin{equation*}\label{eq:character}
 \mathrm{ch} (M) := \sum_{\lambda\in \mathrm{wt}(M)} \dim M_{\lambda} \cdot e^{\lambda},
\end{equation*}
where $\mathrm{wt}(M) = \{\lambda \in P \mid M_\lambda \neq 0\}$, and the $e^{\lambda}$'s  are formal basis elements of the group algebra $\mathbb C[\mathfrak{h}^*]$, with the multiplication defined by $e^{\lambda}e^{\mu}= e^{\lambda+\mu}$.

Let $L(\lambda)$ be an irreducible highest weight $U_q(\gf)$-module of highest weight $\lambda \in P^+$.
Then its character is given by
$$
e^{-\lambda}\mathrm{ch} (L(\lambda)) = \sum_{\mu\in \h^*} \dim L(\lambda)_{\mu} \cdot e^{\mu-\lambda} \quad \in \quad \Z[[e^{-\alpha_i}, i\in \I]].
$$
In other words, the character $e^{-\lambda}\mathrm{ch} (L(\lambda))$ is a series with positive coefficients in the $e^{-\alpha_i}$'s.
For a fixed weight $\lambda\in P$, the irreducible highest weight $\gf$-modules of weight $\lambda$ can be identified with the irreducible highest weight $U_q(\gf)$-modules of weight $\lambda$, and we have equality of characters.

\subsection{Perfect crystals}
\label{sec:crystal}
We now briefly recall notions on perfect crystals which are necessary to state the (KMN)$^2$ crystal base character formula. We assume that the reader is somewhat familiar with the basic definitions of crystal bases and quantum algebras, or can quickly catch up by reading Chapters $4$ and $10$ of \cite{HK} or our thorough introduction in \cite{DK19-2}.

Let $\mathcal O^q_{\hbox {\rm \small  int}}$ denote the category of integrable $U_q(\gf)$-modules. To each module $M = \bigoplus_{\lambda \in P} M_\lambda \in \mathcal O^q_{\hbox {\rm \small  int}}$, one can associate a corresponding crystal base $(\mathcal L, \B)$, which is unique up to isomorphism. There is a crystal graph associated to $\B$, which has vertex set $\B$, and oriented edges
$$ b \xrightarrow[]{\,\,\, i \,\,\,} b' \quad \text{if and only if}\quad \tilde f_i b = b' \text{ (or equivalently } \tilde e_i b' = b),$$
where $\tilde e_i$ and $\tilde f_i$ are the Kashiwara operators.

For $i \in \I$, the functions $\varepsilon_i, \varphi_i: \B \rightarrow \Z$ are defined as follows:
$$
\begin{array}{cc}
&\varepsilon_i(b) = \max\{ k  \geq 0 \mid \tilde e_i^k b \in \B\}, \\
&\varphi_i(b) =  \max\{ k  \geq 0 \mid \tilde f_i^k b \in \B\}. \end{array}
$$
Now define
$$\varepsilon(b) = \sum_{i=0}^{n-1} \varepsilon_i(b) \Lambda_i,
\qquad \text{and} \qquad \varphi(b) = \sum_{i=0}^{n-1} \varphi_i(b) \Lambda_i.$$
We then have $ \wt b = \varphi(b) -\varepsilon(b)$, and for all $b\in \B$ such that $\tilde e_i b \neq 0$,
$$\mathrm{wt} (\tilde e_i b) - \mathrm{wt} b = \alpha_i.$$
An energy function on $\B \ot \B$ is a map $H: \B \ot \B \rightarrow
\Z$ satisfying, for all $i \in \I$ and $b_1,b_2$ with $\tilde e(b_1 \ot b_2) \neq 0$,
$$ H\left(\tilde e_i (b_1 \ot b_2)\right)
= \begin{cases} H(b_1 \ot b_2) & \qquad \hbox{\rm if} \ \ i \neq 0, \\
H(b_1 \ot b_2) + 1 & \qquad \hbox{\rm if} \ \ i = 0 \ \hbox{\rm and} \ \varphi_0(b_1)  \geq \varepsilon_0(b_2)  \\
H(b_1 \ot b_2) - 1 & \qquad \hbox{\rm if} \ \ i = 0 \ \hbox{\rm and} \ \varphi_0(b_1)< \varepsilon_0(b_2).
\end{cases} $$
By definition, in the crystal graph of $\B \ot \B$, the value of $H(b_1\ot b_2)$ determines the values $H(b'_1\ot b'_2)$ for all vertices $b'_1\ot b'_2$ in the same connected component as $b_1\ot b_2$.
Energy functions will play a key role in the (KMN)$^2$ crystal base character formula.

\medskip
Perfect crystals, introduced by Kang, Kashiwara, Misra, Miwa, Nakashima, and Nakayashiki \cite{KMN2a,KMN2b}, provide a construction of the crystal base $\B(\lambda)$ of any irreducible $U_q(\gf)$-module
$L(\lambda)$ corresponding to a classical weight $\lambda \in \bar P^+$. 

\begin{deff}\label{deff:pc}(\cite[Definition 10.5.1]{HK})   For a positive integer $\ell$, a finite
classical crystal $\B$ is said to be a {\it perfect crystal of
level $\ell$}  for the quantum affine algebra $U_q(\gf)$ if
\begin{itemize}
\item[{\rm (1)}]  there is a finite-dimensional $U_q'(\gf)$-module
with a crystal base whose crystal graph is isomorphic to
$\B$ (when the $0$-arrows are removed);
\item[{\rm (2)}]  $\B \otimes \B$ \,  is connected;
\item[{\rm (3)}]  there exists a classical weight \,$\lambda_0$\,
such that

$$\mathrm{wt}(\B) \subset \lambda_0 + \frac{1}{d_0}\sum_{i \neq 0} \Z_{\leq 0} \alpha_i
\quad \hbox{\rm and} \quad  |\B_{\lambda_0}| = 1;$$

\item[{\rm (4)}]  for any $b \in \B$, we have
$$\langle c,\varepsilon(b)\rangle
= \sum_{i=0}^{n-1} \varepsilon_i(b) \Lambda_i(c)  \geq \ell;$$

\item[{\rm (5)}]   for each $\lambda \in \bar P_\ell^+ := \{ \mu \in \bar P^+  \mid \langle c, \mu \rangle = \ell\}$, there exist unique
vectors $b^\lambda$ and $b_\lambda$  in $\B$ such that
$\varepsilon(b^\lambda) = \lambda$ and $\varphi(b_\lambda) = \lambda$.
\end{itemize}
\end{deff}
Let us fix a perfect crystal $\B$ for the remainder of this section.
The maps $\lambda \mapsto \varepsilon(b_\lambda)$ and $\lambda\mapsto \varphi(b^\lambda)$ then define two bijections on $\bar P_\ell^+$. 
As a consequence of the vertex operator theory (\cite[(10.4.4)]{HK}), for any $\lambda \in \bar P^+_\ell$, there is a natural crystal isomorphism
\begin{gather}\label{deff:vertex} \mathcal B(\lambda) \ \iso \  \mathcal B(\varepsilon(b_{\lambda})) \ot \mathcal B \\
  \quad u_{\lambda} \, \mapsto \ \ u_{\varepsilon(b_{\lambda})}  \ot b_{\lambda}  .\nonumber 
\end{gather}

We now define the famous ground state paths and $\lambda$-paths, which are related with grounded and multi-grounded partitions.
\begin{deff}  For $\lambda \in \bar P^+_\ell$, the {\it ground state path of weight} $\lambda$ is the tensor product
 $${\p}_\lambda = \,\bigl(g_k)_{k=0}^\infty \,= \  \  \cdots \ot g_{k+1} \ot g_k \ot \cdots \ot g_1 \ot g_0,$$
 where $g_k \in \B$ for all $k \geq 0$, and
 \begin{equation}\label{eq:lamb}\begin{array} {ccc}
\lambda_0 = \lambda &\qquad g_0 = b_\lambda &  \\
\lambda_{k+1} = \varepsilon(b_{\lambda_k})
 &\qquad \, g_{k+1} = b_{\lambda_{k+1}}  & \qquad  \hbox{\rm for all}\ \
k  \geq 0\, .\end{array} \end{equation}
A tensor product $\p = (p_k)_{k=0}^\infty =  \cdots \ot p_{k+1} \ot p_k \ot \cdots \ot p_1 \ot p_0$ of elements $p_k \in \B$ is said to be a $\lambda$-{\it path} if
$p_k = g_k$ for $k$ large enough. Let $\mathcal P(\lambda)$ denote the set of $\lambda$-paths .
\end{deff}

Iterating \eqref{deff:vertex}, we obtain the following isomorphism.
 \begin{theo}
 \label{theorem:cryiso}(\cite[Theorem 10.6.4]{HK}) 
 Let $\lambda \in  \bar P^+_\ell$.
 Then there is a crystal isomorphism
 \begin{align*}
 \B(\lambda)  &\iso \mathcal P(\lambda)\\
 u_\lambda &\mapsto \p_\lambda
 \end{align*}
 between the crystal base $\B(\lambda)$ of $L(\lambda)$ and the set $\mathcal P(\lambda)$ of $\lambda$-paths.
 \end{theo}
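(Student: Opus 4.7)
The plan is to iterate the vertex-operator crystal isomorphism \eqref{deff:vertex} and pass to a limit. Set $\lambda_0 := \lambda$ and recursively $\lambda_{k+1} := \varepsilon(b_{\lambda_k})$ as in \eqref{eq:lamb}. Axioms (4) and (5) of Definition \ref{deff:pc} guarantee that each $\lambda_k$ lies in $\bar P^+_\ell$, so \eqref{deff:vertex} applies at every stage. A single application gives $\B(\lambda) \iso \B(\lambda_1) \ot \B$, with $u_\lambda \mapsto u_{\lambda_1} \ot b_{\lambda_0}$. Applying \eqref{deff:vertex} to the leftmost factor $N$ times produces, for each $N \geq 1$, a crystal isomorphism
$$\B(\lambda) \iso \B(\lambda_N) \ot \B^{\ot N}, \qquad u_\lambda \ \mapsto \ u_{\lambda_N} \ot b_{\lambda_{N-1}} \ot \cdots \ot b_{\lambda_0},$$
so that the tail $b_{\lambda_{N-1}} \ot \cdots \ot b_{\lambda_0}$ already coincides with the truncation of the ground state path $\p_\lambda$.

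Next I would construct the candidate isomorphism $\B(\lambda) \to \Pp(\lambda)$. Any $b \in \B(\lambda)$ is obtained from the highest weight vector $u_\lambda$ by a finite word in the Kashiwara operators, and the tensor-product sign rule implies that such a word modifies only finitely many factors of the image in $\B(\lambda_N) \ot \B^{\ot N}$. Hence for $N$ large enough the image has the form $u_{\lambda_N} \ot p_{N-1} \ot \cdots \ot p_0$, and stabilises as $N$ grows. Repeatedly expanding $u_{\lambda_N}$ via \eqref{deff:vertex} shows that this data assembles into a unique infinite sequence $(p_k)_{k \geq 0}$ with $p_k = g_k$ for $k \geq N$, that is, a $\lambda$-path. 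The map $b \mapsto (p_k)_{k \geq 0}$ is injective because each finite-stage map is; it is surjective because any $\lambda$-path eventually equals the ground state tail and thus lies in the image of some $\B(\lambda_N) \ot \B^{\ot N}$.

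Finally I would verify that this bijection intertwines the crystal data. The Kashiwara operators, the weight function, and the statistics $\varepsilon_i, \varphi_i$ on $\Pp(\lambda)$ are defined from the tensor-product rules on finite truncations; compatibility with the iterated isomorphism at every finite level then transfers to the limit, sending $u_\lambda$ to $\p_\lambda$. The identity $\mathrm{wt}(\p_\lambda) = \lambda$ follows by a telescoping argument from $\mathrm{wt}(b_{\lambda_k}) = \varphi(b_{\lambda_k}) - \varepsilon(b_{\lambda_k}) = \lambda_k - \lambda_{k+1}$.

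The main obstacle is making the limiting procedure rigorous: one must confirm that $\tilde e_i$ and $\tilde f_i$ genuinely act on only finitely many tensor factors of any element of $\Pp(\lambda)$, so that they are well defined. This reduces to showing that the ground state tail is \emph{stable} under the sign rule, which in turn follows from the equalities $\varepsilon(b_{\lambda_k}) = \lambda_{k+1} = \varphi(b_{\lambda_{k+1}})$ built into the definition of $\p_\lambda$; together with axiom (4), these prevent the Kashiwara signs from propagating arbitrarily far to the left. Once this stability is established, every step above is essentially formal.
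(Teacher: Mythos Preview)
Your approach is correct and matches the paper's treatment: the paper does not give a proof but simply cites \cite[Theorem 10.6.4]{HK} and remarks, immediately before the statement, that the isomorphism is obtained by ``iterating \eqref{deff:vertex}'', which is exactly the strategy you carry out in detail. Your sketch is more explicit than anything in the paper itself, but the underlying idea---iterate the vertex-operator isomorphism, stabilise on finite truncations, and verify compatibility of the crystal data in the limit---is the standard argument the paper is invoking.
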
  

The crystal structure of $\mathcal P(\lambda)$ can be described as follows (\cite[(10.48)]{HK}).   
For any $\p = (p_k)_{k=0}^\infty \in \mathcal P(\lambda)$, let $N \geq 0$ be the smallest integer such that  $p_k = g_k$ for all $k  \geq N$. We have
\begin{align*}
\mathrm{wt} \p &= \lambda_N + \sum_{k=0}^{N-1} \wt p_k,\\
\tilde e_i \p\  &= \ \ \cdots \ot g_{N+1} \ot \tilde e_i\left( g_N \ot \cdots \ot p_0 \right), \\
\tilde f_i \p \ &= \ \ \cdots \ot g_{N+1} \ot \tilde f_i\left( g_N \ot \cdots \ot p_0 \right), \\
\varepsilon_i(\p) &= \max\left( \varepsilon_i(\p')-\varphi_i(g_N), 0\right),  \\
\varphi_i(\p) &= \varphi_i(\p') +  \max\left(\varphi_i(g_N)-\varepsilon_i(\p'), 0\right),
\end{align*} 
where $\p' := p_{N-1} \ot \cdots \ot p_1 \ot p_0$, and
$\wt$ is viewed as the classical weight of an element
of $\B$ or $\mathcal P(\lambda)$. \m

We are now ready to state the (KMN)$^2$ crystal base character formula, which gives an explicit expression for the affine weight  $\mathrm{wt} \p$ and connects it with the character of $L(\lambda)$.
\begin{theo}[(KMN)$^2$ crystal base character formula \cite{KMN2a}]
\label{theorem:wtchar}
Let $\lambda \in \bar P^+_{\ell}$, let $H$ be an energy function on $\B \ot \B$, and let  $\p = (p_k)_{k=0}^\infty \in \mathcal P(\lambda)$.
Then the weight of $\p$ and the character of the irreducible highest weight $U_q(\widehat \gf)$-module
$L(\lambda)$ are given by the following expressions:
\begin{align}
\mathrm{wt} \p &= \lambda + \sum_{k=0}^\infty \left(\wt p_k -\wt g_k\right)  - \frac{\delta}{d_0}\sum_{k=0}^\infty (k+1)\Big(H(p_{k+1} \ot p_k) - H(g_{k+1}\ot g_k)\Big),  \label{eq:firsteq} \\
         &= \lambda + \sum_{k=0}^\infty \left(\left(\wt p_k -\wt g_k\right)  -  \frac{\delta}{d_0}\sum_{\ell=k}^\infty (H(p_{\ell+1} \ot p_\ell) - H(g_{\ell+1}\ot g_\ell))\right), \nonumber\\
\mathrm{ch}(L(\lambda)) &= \sum_{\p \in \mathcal P(\lambda)}  e^{\mathrm{wt} \p}. \label{eq:charweight}
\end{align}
\end{theo}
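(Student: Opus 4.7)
The character identity \eqref{eq:charweight} is an immediate corollary of Theorem \ref{theorem:cryiso}: the crystal isomorphism $\B(\lambda)\iso\Pp(\lambda)$ sending $u_\lambda$ to $\p_\lambda$ preserves affine weights, and combined with the general identity $\mathrm{ch}(L(\lambda)) = \sum_{b\in\B(\lambda)} e^{\mathrm{wt}\,b}$ for integrable highest weight $U_q(\gf)$-modules, it transports the character to a sum over $\Pp(\lambda)$.

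For the weight formula \eqref{eq:firsteq}, I would introduce the candidate function
\[ W(\p) := \lambda + \sum_{k=0}^\infty\big(\wt p_k - \wt g_k\big) - \frac{\delta}{d_0}\sum_{k=0}^\infty (k+1)\big(H(p_{k+1}\ot p_k) - H(g_{k+1}\ot g_k)\big), \]
which is a finite sum for every $\p\in\Pp(\lambda)$, and prove $W = \mathrm{wt}$ throughout $\Pp(\lambda)$. Since $\B(\lambda)\cong\Pp(\lambda)$ is connected, it suffices to check \emph{(i)} $W(\p_\lambda)=\lambda=\mathrm{wt}(\p_\lambda)$, which is immediate since every summand vanishes on the ground state path, together with \emph{(ii)} the shift rule $W(\tilde e_i\p) - W(\p) = \alpha_i$ whenever $\tilde e_i\p\neq 0$, matching the same shift for $\mathrm{wt}$.

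To establish \emph{(ii)}, fix $N$ minimal with $p_k = g_k$ for $k\geq N$, and apply the tensor-product signature rule to $g_N\ot p_{N-1}\ot\cdots\ot p_0$ to pinpoint the unique position $k_0$ at which $\tilde e_i$ acts, sending $p_{k_0}\mapsto \tilde e_i p_{k_0}$. The classical weight then shifts by $\bar\alpha_i$, the classical part of $\alpha_i$, and only the two energy values $H(p_{k_0+1}\ot p_{k_0})$ and (if $k_0\geq 1$) $H(p_{k_0}\ot p_{k_0-1})$ can be affected. For $i\neq 0$ the defining property of $H$ keeps both values invariant and $\alpha_i = \bar\alpha_i$, so $W$ shifts by exactly $\alpha_i$. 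For $i=0$, using the decomposition $\alpha_0 = \bar\alpha_0 + \delta/d_0$ (obtained by pairing $\alpha_0$ with $d$ and with the $h_j$), it remains to show that the two energy variations $\Delta_1,\Delta_2$ satisfy $(k_0+1)\Delta_1 + k_0\Delta_2 = -1$, so that the weighted energy contribution to $W$ produces exactly the $+\delta/d_0$ correction lifting $\bar\alpha_0$ to $\alpha_0$.

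\textbf{Main obstacle.} The delicate step is the case $i=0$ of \emph{(ii)}: one must reconcile the \emph{global} signature rule that determines $k_0$ within the full tensor product $g_N\ot p_{N-1}\ot\cdots\ot p_0$ with the \emph{local} $\pm 1$ rule of $H$ on adjacent pairs, and verify the combinatorial identity $(k_0+1)\Delta_1 + k_0\Delta_2 = -1$ for every admissible $k_0$. This hinges on showing that the $\tilde e_0$-action propagates through the tensor chain in a way compatible with the local energy rule, a consistency property that ultimately reflects the vertex-operator origin of $H$ in the (KMN)$^2$ construction of $L(\lambda)$.
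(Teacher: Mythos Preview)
The paper does not prove this theorem; it is quoted from \cite{KMN2a} (see also \cite[Chapter~10]{HK}) as background for the construction of multi-grounded partitions. So there is no ``paper's own proof'' to compare against.

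Your outline is essentially the standard argument found in the original source and in \cite{HK}. The connectedness reduction to (i) the value at $\p_\lambda$ and (ii) the shift $W(\tilde e_i\p)-W(\p)=\alpha_i$ is exactly right, as is the decomposition $\alpha_0=\bar\alpha_0+\delta/d_0$.

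The step you flag as the ``main obstacle'' is, however, much shallower than you make it sound. No vertex-operator input is required; it is pure crystal combinatorics. If $\tilde e_0$ acts at position $k_0$ in the infinite tensor product, the signature rule forces two local facts: in the adjacent pair $p_{k_0+1}\ot p_{k_0}$ the operator $\tilde e_0$ must act on the \emph{right} factor (hence $\varphi_0(p_{k_0+1})<\varepsilon_0(p_{k_0})$ and $\Delta_1=-1$ by the very definition of $H$), while in the pair $p_{k_0}\ot p_{k_0-1}$ it must act on the \emph{left} factor (hence $\varphi_0(p_{k_0})\geq\varepsilon_0(p_{k_0-1})$ and $\Delta_2=+1$). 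This gives $(k_0+1)\Delta_1+k_0\Delta_2=-(k_0+1)+k_0=-1$ immediately, with the boundary case $k_0=0$ handled by the absence of the second pair. The only thing to check is the elementary signature-rule lemma that the global action position is consistent with the two local two-fold tensor rules, which follows from associativity of the tensor product of crystals. Once you supply that one-line lemma, the proof is complete.
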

A specialisation of Theorem \ref{theorem:wtchar} gives the following corollary in the special case where the ground state path is constant.

\begin{cor}\label{cor:difcond}  Suppose that $\lambda \in \bar P^+_{\ell}$ is such that $b_{\lambda}=b^{\lambda}= g$, and set $ H(g \ot g)=0$.
Then  $\ov{\rm{wt}}g = 0$, $g_k=g$ for all $k\in \mathbb{Z}_{\geq 0}$, and we have  
\begin{equation*}\label{eq:secondeq}
\mathrm{wt}\p = \lambda + \sum_{k=0}^\infty \left( \wt p_k-  \frac{\delta}{d_0} \sum_{\ell=k}^\infty H(p_{\ell+1} \ot p_\ell)\right).
\end{equation*}
\end{cor}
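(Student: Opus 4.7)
The plan is to deduce the corollary as a direct specialisation of Theorem \ref{theorem:wtchar}, with the main content being the verification that the constancy of the ground state path collapses all the ``$\wt g_k$'' and ``$H(g_{\ell+1}\ot g_\ell)$'' terms in the two sums.

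First I would pin down the ground state path itself. By Definition \ref{deff:pc}(5), $b_\lambda$ is characterised by $\varphi(b_\lambda)=\lambda$ and $b^\lambda$ by $\varepsilon(b^\lambda)=\lambda$. The hypothesis $b_\lambda = b^\lambda = g$ therefore forces $\varphi(g)=\varepsilon(g)=\lambda$, so that $\wt g = \varphi(g)-\varepsilon(g)=0$, which gives the first claimed identity. Now I apply the recursion \eqref{eq:lamb} inductively: $\lambda_0=\lambda$ and $g_0=b_\lambda=g$ by definition, and assuming $\lambda_k=\lambda$ and $g_k=g$, we get
\begin{equation*}
\lambda_{k+1}=\varepsilon(b_{\lambda_k})=\varepsilon(b_\lambda)=\varepsilon(g)=\lambda,\qquad g_{k+1}=b_{\lambda_{k+1}}=b_\lambda=g,
\end{equation*}
which closes the induction. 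Hence $g_k=g$ for all $k\in\mathbb{Z}_{\geq 0}$.

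Next I would plug these two facts into the second expression for $\mathrm{wt}\,\p$ provided by Theorem \ref{theorem:wtchar}, namely
\begin{equation*}
\mathrm{wt}\,\p = \lambda + \sum_{k=0}^\infty \left(\bigl(\wt p_k - \wt g_k\bigr) - \frac{\delta}{d_0}\sum_{\ell=k}^\infty \bigl(H(p_{\ell+1}\ot p_\ell) - H(g_{\ell+1}\ot g_\ell)\bigr)\right).
\end{equation*}
Since $g_k=g$ for every $k$, we have $\wt g_k = \wt g = 0$; and since $H(g\ot g)=0$ by hypothesis, every term $H(g_{\ell+1}\ot g_\ell)=H(g\ot g)$ also vanishes. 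Substituting these vanishings term by term into the nested sum immediately produces the advertised formula
\begin{equation*}
\mathrm{wt}\,\p = \lambda + \sum_{k=0}^\infty \left(\wt p_k - \frac{\delta}{d_0}\sum_{\ell=k}^\infty H(p_{\ell+1}\ot p_\ell)\right).
\end{equation*}

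There is no genuine obstacle here beyond the mild bookkeeping of unpacking Definition \ref{deff:pc}(5) and the recursion \eqref{eq:lamb}; everything is an immediate consequence of Theorem \ref{theorem:wtchar} once one observes that in the constant ground state setting the $g$-contributions telescope away trivially. The only point worth stating with care is that the rewriting of the double sum is legal termwise because for each fixed $\p\in\mathcal P(\lambda)$ we have $p_k=g_k=g$ for all sufficiently large $k$, so the inner sums over $\ell$ are eventually zero and no convergence issue arises.
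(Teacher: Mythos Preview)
Your proof is correct and follows exactly the approach the paper indicates: the paper simply states that the corollary is ``a specialisation of Theorem~\ref{theorem:wtchar}'' without spelling out the details, and your argument is precisely that specialisation, carried out carefully via Definition~\ref{deff:pc}(5) and the recursion~\eqref{eq:lamb}.
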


\subsection{Grounded partitions}
\label{sec:groundedpartitions}
To make the connection between character formulas and partitions (in particular the Primc partition identity), we introduced the concept of \textit{grounded partitions} in \cite{DK19-2}. First, recall the definition of these objects.

\begin{deff}
Let $\C$ be a set of colours, and let $\Z_{\C} = \{k_c : k \in \Z, c \in \C\}$ be the set of integers coloured with the colours of $\C$.
Let $\succ$ be a binary relation defined on $\Z_{\C}$.
A \textit{generalised coloured partition} with relation $\succ$ is a finite sequence $(\pi_0,\ldots,\pi_s)$ of coloured integers, such that for all $i \in \{0, \dots, s-1\},$ $\pi_i\succ \pi_{i+1}.$
\end{deff}

In the following, if $\pi=(\pi_0,\ldots,\pi_s)$ is a generalised coloured partition, then $c(\pi_i) \in \C$ denotes the colour of the part $\pi_i$. The quantity $|\pi|=\pi_0+\cdots+\pi_s$ is the weight of $\pi$, and $C(\pi) = c(\pi_0)\cdots c(\pi_s)$ is its colour sequence.

Fix a particular colour $\co \in \C$. Grounded partitions, which are directly related to constant ground state paths, are defined as follows.
\begin{deff}[\cite{DK19-2}] 
A \textit{grounded partition} with ground $\co$ and relation $\succ$ is a non-empty generalised coloured partition $\pi=(\pi_0,\ldots,\pi_s)$ with relation $\succ$, such that $\pi_s = 0_{\co}$, and when $s>0$, $\pi_{s-1}\neq 0_{\co}.$
\noindent Let $\Psuc$ denote the set of such partitions.
\end{deff}

For the remainder of this section, we fix $\B$ a  perfect crystal of
level $\ell$ for $U_q(\gf)$.
Let $\lambda$ be a weight of $\bar P^+_{\ell}$ such that  $b_{\lambda}=b^{\lambda}=g$, i.e. having a constant ground state path ${\p}_\lambda =\cdots \ot g \ot g \ot g.$ Let $H$ be an energy function on $\B \ot \B$ such that $H(g \ot g)=0$. Let $\Co_{\B}=\{c_b: \,b\in \B\}$ be the set of colours indexed by $\B$. 
We define the binary relation $\gtrdot$ on $\Z_{\Co_{\B}}$ by 
\begin{equation*}\label{eq:restr}
k_{c_b}\gtrdot k'_{c_{b'}} \text{ if and only if } k-k'= H(b'\ot b).
\end{equation*}

Then the set of $\lambda$-paths is in bijection with the set of grounded partitions $\Pppp$.

\begin{prop}[\cite{DK19-2}] 
\label{prop:flat1}
Let $\phi$ be the map between $\lambda$-paths and grounded partitions defined as follows:
\begin{equation*}
\phi : \quad \p\mapsto(\pi_0,\ldots,\pi_{s-1},0_{\co}),
\end{equation*}
where $\p=(p_k)_{k\geq 0}$ is a $\lambda$-path in $\Pp(\lambda)$, $s\geq 0$ is the unique non-negative integer such that $p_{s-1}\neq g$ and $p_k=g$ for all $k\geq s$, 
and for all $k\in \ssss$, the part $\pi_k$ has colour $c_{p_k}$ and size
$\sum_{l=k}^{s-1} H(p_{k+1}\ot p_k).$
Then $\phi$ is a bijection between $\Pp(\lambda)$ and $\Pppp$. Furthermore, by taking $c_{b}=e^{\wt b}$, we have for all $\pi\in \Pppp$,  
\begin{equation*}\label{eq:paca}
e^{-\lambda+\mathrm{wt} (\phi^{-1}(\pi))}  = C(\pi)e^{-\frac{\delta|\pi|}{d_0}}.
\end{equation*} 
\end{prop}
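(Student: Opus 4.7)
\smallskip
\noindent\textbf{Proof plan.} The plan is to verify three things in turn: that $\phi$ lands in $\Pppp$, that $\phi$ admits an explicit two-sided inverse, and finally that the weight identity holds by a direct telescoping from Corollary \ref{cor:difcond}.

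\smallskip

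First I would check that $\phi(\p)$ is a grounded partition. Write $N_k := \sum_{\ell=k}^{s-1} H(p_{\ell+1}\ot p_\ell)$ for $0\le k\le s-1$, and $N_s := 0$. Then $\pi_k$ has size $N_k$ and colour $c_{p_k}$, while $\pi_s = 0_{\co}$. A direct computation gives
\[
N_k - N_{k+1} \;=\; H(p_{k+1}\ot p_k) \qquad (0\le k\le s-1),
\]
using the convention $p_s = g$ for the last step. By the definition of $\gtrdot$ on $\Z_{\Co_\B}$, this is precisely the condition $\pi_k \gtrdot \pi_{k+1}$. Moreover, by the choice of $s$, we have $p_{s-1} \neq g$, so $c_{p_{s-1}} \neq c_g = \co$, which guarantees $\pi_{s-1} \neq 0_{\co}$. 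Thus $\phi(\p) \in \Pppp$.

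\smallskip

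Next I would exhibit the inverse. Since $b\mapsto c_b$ is a bijection between $\B$ and $\Co_\B$, any coloured integer $k_{c}\in \Z_{\Co_\B}$ determines a unique $b\in \B$ with $c_b = c$. Given $\pi=(\pi_0,\ldots,\pi_{s-1},0_{\co})\in\Pppp$, define $\p=(p_k)_{k\ge 0}$ by letting $p_k$ be the unique element of $\B$ with $c_{p_k}$ equal to the colour of $\pi_k$ for $k\le s-1$, and $p_k = g$ for $k\ge s$. Because $p_k = g$ for all large $k$, $\p$ is automatically a $\lambda$-path. A backward induction starting from $\pi_s = 0$ and using the relation $\pi_k - \pi_{k+1} = H(p_{k+1}\ot p_k)$ recovers $\pi_k = N_k$, so $\phi(\p) = \pi$. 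Conversely, applied to $\phi(\p)$, this construction returns the original $\p$: the colours determine the $p_k$, and the condition $p_{s-1}\ne g$ (equivalent to $\pi_{s-1}\ne 0_{\co}$) ensures the truncation index is $s$.

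\smallskip

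Finally, for the weight identity, I would invoke Corollary \ref{cor:difcond}. Since $b_\lambda = b^\lambda = g$, we have $\wt g = \varphi(g) - \varepsilon(g) = \lambda - \lambda = 0$, and $p_k = g$ for $k\ge s$, so the infinite sums collapse to finite ones:
\[
\mathrm{wt}\,\p - \lambda \;=\; \sum_{k=0}^{s-1} \wt p_k \;-\; \frac{\delta}{d_0}\sum_{k=0}^{s-1} N_k.
\]
The second sum is exactly $|\pi|$ (as $\pi_s = 0$), and exponentiating the first via $c_b = e^{\wt b}$ together with $c_g = 1$ gives $\prod_{k=0}^{s-1} c_{p_k} = C(\pi)$. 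Hence $e^{-\lambda + \mathrm{wt}(\phi^{-1}(\pi))} = C(\pi)\, e^{-\delta|\pi|/d_0}$, as claimed.

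\smallskip

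No step presents a real obstacle: the whole proof is essentially a bookkeeping argument once one observes that the difference relation $\gtrdot$ was defined precisely so as to match the telescoping differences $N_k - N_{k+1}$ coming from the energy function. The only care needed is the distinction between ``sequence terminates at index $s-1$'' and ``last non-ground part is at index $s-1$'', which is handled by the extra condition $\pi_{s-1}\ne 0_{\co}$ in the definition of a grounded partition.
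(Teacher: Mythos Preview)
Your proof is correct and follows essentially the same approach as the paper's: it is the $t=1$ specialisation of the argument given for Proposition~\ref{prop:flat2} (the paper does not reprove Proposition~\ref{prop:flat1} itself, citing \cite{DK19-2} instead). The only step you could make more explicit is the claimed equivalence ``$p_{s-1}\neq g \Leftrightarrow \pi_{s-1}\neq 0_{\co}$'' in the inverse direction: it holds here because $\gtrdot$ forces $\pi_{s-1}-0 = H(g\ot g)=0$ whenever $c(\pi_{s-1})=\co$, but this uses the specific relation $\gtrdot$ and the normalisation $H(g\ot g)=0$, not just that colours determine crystal elements.
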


We also described a connection with the set $\Ppp$ of grounded partitions for the relation $\gg$ defined by
\begin{equation*}\label{eq:relation}
 k_{c_b}\gg k'_{c_{b'}} \text{ if and only if } k-k'\geq H(b'\ot b).
\end{equation*}
%We refer to this relation as \textit{minimal difference conditions}.
One can view the partitions of $\Pppp$ as the partitions of $\Ppp$ such that the differences between consecutive parts are minimal.
%We then refer to  the partitions of $\Pppp$ as some minimal partitions of $\Ppp$.
However, $\Pppp$ is not exactly the set of all minimal partitions of $\Ppp$
because, contrarily to $\Pppp$, the set $\Ppp$ has some partitions $\pi = (\pi_0,\ldots,\pi_{s-1},0_{\co})$ such that $c(\pi_{s-1})= \co$. 
Nonetheless, these two sets of grounded partitions are related by the following proposition.

\begin{prop}[\cite{DK19-2}]
\label{prop:diff}
Let $\Pp_{\co}$ be the set of grounded partitions where all parts have colour $\co$. There is a bijection $\Phi$ between $\Ppp$ and $\Pppp\times \Pp_{\co}$, such that if $\Phi(\pi) =(\mu,\nu)$, then $|\pi|=|\mu|+|\nu|$, and by setting $\co=1$, we have $C(\pi)=C(\mu)$.
\end{prop}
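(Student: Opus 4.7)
The plan is to split a partition $\pi \in \Ppp$ into a ``tight'' component $\mu \in \Pppp$ encoding its non-$\co$ colour pattern and a $\co$-monochromatic component $\nu \in \Pp_\co$ that absorbs both the difference slack and any trailing $\co$-coloured parts. Given $\pi = (\pi_0,\ldots,\pi_{s-1}, 0_\co) \in \Ppp$, I first identify the $\co$-coloured tail: let $j \in \{0,\ldots,s\}$ be the smallest index with $c(\pi_k) = \co$ for all $k = j,\ldots,s-1$, so that $j = s$ exactly when $c(\pi_{s-1}) \neq \co$. Write $p_i$ for the unique element of $\B$ with $c_{p_i} = c(\pi_i)$; by the choice of $j$ we have $p_i = g$ for every $i \geq j$. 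Set
\[
\mu_i := \sum_{k=i}^{j-1} H(p_{k+1} \ot p_k) \quad (0 \leq i < j), \qquad c(\mu_i) := c(\pi_i),
\]
and $\mu := (\mu_0,\ldots,\mu_{j-1}, 0_\co)$. Next, with $e_k := (\pi_k - \pi_{k+1}) - H(p_{k+1} \ot p_k) \geq 0$, define $\tilde\nu_i := \pi_i - \mu_i$ for $i < j$ and $\tilde\nu_i := \pi_i$ for $j \leq i < s$, colour every entry by $\co$, and trim trailing zeros before appending $0_\co$ to obtain $\nu$.

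I would then verify that the construction is well-defined. By construction, consecutive differences of $\mu$ are exactly $H(p_{k+1}\ot p_k)$, and $c(\mu_{j-1}) = c(\pi_{j-1}) \neq \co$ when $j > 0$, so $\mu \in \Pppp$. For $\nu$, the sequence $\tilde\nu$ is non-increasing: for $i < j-1$ we have $\tilde\nu_i - \tilde\nu_{i+1} = e_i \geq 0$; at $i = j-1$ the relation $\pi_{j-1} \gg \pi_j$ (with $p_j = g$) gives $\tilde\nu_{j-1} - \tilde\nu_j \geq 0$; and for $j \leq i < s$ we have $\tilde\nu_i - \tilde\nu_{i+1} = \pi_i - \pi_{i+1} \geq H(g \ot g) = 0$. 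If $j < s$, the constraint $\pi_{s-1} \neq 0_\co$ combined with $c(\pi_{s-1}) = \co$ forces $\tilde\nu_{s-1} = \pi_{s-1} > 0$; if $j = s$, trimming trailing zeros yields a valid element of $\Pp_\co$. A short telescoping computation gives
\[
|\nu| = j\pi_j + \sum_{k=0}^{j-1}(k+1)e_k + \sum_{i=j}^{s-1}\pi_i = |\pi| - |\mu|,
\]
and the colour identity $C(\pi) = C(\mu)\cdot \co^{\,s-j}$ is immediate, specialising to $C(\pi) = C(\mu)$ when $\co = 1$.

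For the inverse, given $(\mu,\nu) \in \Pppp \times \Pp_\co$ with $\mu$ of length $j+1$ and $\nu$ of length $r+1$, I extend $\nu_i := 0$ for $i \geq r$ and set $s := \max(j,r)$; then $\pi_i := \mu_i + \nu_i$ with colour $c(\mu_i)$ for $i < j$, and $\pi_i := \nu_i$ with colour $\co$ for $j \leq i < s$. The conditions that $\mu$ has difference exactly $H$ with $c(\mu_{j-1}) \neq \co$, together with $\nu$ weakly decreasing with positive last nontrivial part, translate directly into $\pi_i \gg \pi_{i+1}$ and $\pi_{s-1} \neq 0_\co$, so $\pi \in \Ppp$. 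The two maps are manifestly mutually inverse. The main obstacle is the bookkeeping at the cutoff $j$: one has to check at a single interface that $c(\mu_{j-1}) \neq \co$ (so that $\mu$ lies in $\Pppp$ rather than merely in $\Ppp$), that any $\co$-coloured tail of $\pi$ is recoverable from the tail of $\nu$ beyond position $j-1$, and that the equality-of-differences regime in $\mu$ patches consistently with the inequality-of-differences regime in $\pi$.
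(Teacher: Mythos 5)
Your proposal is correct and matches the strategy the authors use: the paper itself only cites this statement from \cite{DK19-2}, but its proof of the generalisation (Proposition \ref{prop:diff2}, the multi-grounded case) is exactly your construction specialised to $t=1$, $d=1$ --- peel off the minimal partition $\mu$ dictated by the colour sequence, absorb the difference slack and the trailing $\co$-coloured parts into $\nu$, and invert by adding parts back. Your single unified treatment of the cutoff $j$ (with trailing-zero trimming) corresponds to the paper's two cases $m<s$ and $m=s$, and your interface checks and the weight/colour identities are the same ones the paper verifies.
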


This allowed us to give a character formula in terms of generating functions for grounded partitions.
\begin{theo}[\cite{DK19-2}]
 \label{theo:formchar}
Let $\B$ be a perfect crystal of level $\ell$ for $U_q(\gf)$. Let $\lambda \in \overline{P}_{\ell}^+$ having a constant ground state path ${\p}_\lambda =\cdots \ot g \ot g\ot g$. Setting $q=e^{-\delta/d_0}$ and $c_b=e^{\wt b}$ for all $b\in \B$, we have $\co=1$, and the character of the irreducible highest weight $U_q(\gf)$-module
$L(\lambda)$ is given by the following expressions:
\begin{align*}
\sum_{\pi\in \Pppp} C(\pi)q^{|\pi|} &= e^{-\lambda}\mathrm{ch}(L(\lambda)),\\
 \sum_{\pi\in \Ppp} C(\pi)q^{|\pi|} &= \frac{e^{-\lambda}\mathrm{ch}(L(\lambda))}{(q;q)_{\infty}}.
\end{align*}
\end{theo}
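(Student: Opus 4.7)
The plan is to combine the (KMN)$^2$ crystal base character formula (Theorem \ref{theorem:wtchar}) with the bijection $\phi$ of Proposition \ref{prop:flat1}, then bootstrap to the second identity via the product decomposition of Proposition \ref{prop:diff}. Since the heavy combinatorial lifting (bijectivity of $\phi$ and $\Phi$, together with the weight-matching identity $e^{-\lambda + \mathrm{wt}(\phi^{-1}(\pi))} = C(\pi) e^{-\delta|\pi|/d_0}$) is already established in \cite{DK19-2} and imported here as Propositions \ref{prop:flat1} and \ref{prop:diff}, the remaining task is to assemble these pieces cleanly and check that the stated specialisation indeed gives $\co = 1$ and turns the sum over $\lambda$-paths into a generating function for grounded partitions.

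For the first identity, I would start from the (KMN)$^2$ character formula $\mathrm{ch}(L(\lambda)) = \sum_{\p \in \Pp(\lambda)} e^{\mathrm{wt}\,\p}$. Under the hypothesis of a constant ground state path $\cdots \ot g \ot g \ot g$, Corollary \ref{cor:difcond} yields $\wt g = 0$ together with the normalisation $H(g \ot g) = 0$, and in particular the specialisation $c_b = e^{\wt b}$ forces $\co = c_g = e^{\wt g} = 1$. Applying $\phi$, each $\lambda$-path $\p$ is sent to a grounded partition $\pi \in \Pppp$, and the weight-preservation identity of Proposition \ref{prop:flat1}, combined with $q = e^{-\delta/d_0}$, directly rewrites $e^{-\lambda} \sum_{\p \in \Pp(\lambda)} e^{\mathrm{wt}\,\p}$ as $\sum_{\pi \in \Pppp} C(\pi)\, q^{|\pi|}$, yielding the first equality.

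For the second identity, I would invoke Proposition \ref{prop:diff}: the bijection $\Phi \colon \Ppp \to \Pppp \times \Pp_{\co}$ satisfies $|\pi| = |\mu| + |\nu|$ and $C(\pi) = C(\mu)$ (using $\co = 1$). It then suffices to identify the generating function of $\Pp_{\co}$ with $1/(q;q)_\infty$. Since every part carries colour $\co$ and the relation $\gg$ restricts on such parts to $k_{\co} \gg k'_{\co} \iff k - k' \geq H(g \ot g) = 0$, the elements of $\Pp_{\co}$ are weakly decreasing sequences of non-negative integers ending in $0_{\co}$; stripping the trailing $0_{\co}$ gives a bijection with ordinary integer partitions (including the empty one), whose generating function is $1/(q;q)_\infty$. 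Multiplying the two generating functions produces the claimed formula.

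The only conceptual step, rather than a real obstacle, is to verify that the various specialisations are internally consistent, namely that $H(g \ot g) = 0$ and $\wt g = 0$ together force $\co = 1$ and make the weight exponents and colour contributions telescope correctly under $\phi$. Since these facts are precisely the content of Corollary \ref{cor:difcond} and the weight-matching in Proposition \ref{prop:flat1}, the proof reduces to concatenating the imported bijections with the standard partition generating function identity, with no further computation required.
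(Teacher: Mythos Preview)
Your proposal is correct and follows essentially the same approach as the paper (and its predecessor \cite{DK19-2}): the first identity is obtained by combining the (KMN)$^2$ character formula with the bijection $\phi$ of Proposition~\ref{prop:flat1} and its weight-matching identity, and the second follows from the product decomposition of Proposition~\ref{prop:diff} together with the observation that $\Pp_{\co}$ has generating function $1/(q;q)_\infty$. In the present paper the theorem is actually imported from \cite{DK19-2} and alternatively recovered as the $t=1$, $D=d=1$ specialisation of Theorem~\ref{theo:formchar2}, whose proof via Propositions~\ref{prop:flat2} and~\ref{prop:diff2} is the exact multi-grounded analogue of the argument you outline.
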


In \cite{DK19-2}, we applied Theorem \ref{theo:formchar} to the case of the level $1$ irreducible highest weight modules of $A_{n-1}^{(1)}$, and retrieved the Kac--Peterson character formula \cite{KacPeterson}. Provided that one is able to compute the generating function for grounded partitions, it can in principle yield character formulas for all the irreducible highest weight modules of level $\ell$ having a constant ground state path.

For any Lie algebra and any level, we can always obtain a constant ground state path by considering, for any perfect crystal $\B$, the tensor product $\Bb=\B\ot \B^{\vee}$, where $\B^\vee$ is the dual of $\B$. However, it is sometimes difficult to find a nice formula for an energy function on $\Bb\ot\Bb$ given an energy function  on $\B\ot\B$. Moreover, it can also be difficult to find a nice expression for the generating function for grounded partitions corresponding to $\Bb$. Therefore Theorem \ref{theo:formchar} is difficult to apply in some cases.

We present a solution to this problem in the next section, by introducing \textit{multi-grounded partitions} which allow us to directly handle the case where the ground state path is not a constant sequence.

\section{Multi-grounded partitions}
\label{sec:multiground}
\subsection{Definition}
In the spirit of grounded partitions, we now define multi-grounded partitions. Contrarily to grounded partitions, not only their smallest part is fixed, but their $t$ smallest parts are fixed for some $t \geq 1$. This will allows us to make a connection with characters of irreducible highest weight modules having a ground state path with period $t$.

\begin{deff}\label{deff:multiground}
 Let $\Co$ be a set of colors, $\Z_{\Co}$ the set of integers coloured with colours in $\C$, and $\succ$ a binary relation defined on $\Z_{\Co}$. Suppose that there exist some colors $c_{g_0},\ldots,c_{g_{t-1}}$ in $\Co$ 
 and \textbf{unique} coloured integers $u_{c_{g_0}}^{(0)},\ldots, u_{c_{g_{t-1}}}^{(t-1)}$ such that
 \begin{align}
  &u^{(0)}+\cdots+u^{(t-1)}=0,  \label{eq:sumu} \\
  &u_{c_{g_0}}^{(0)}\succ u_{c_{g_1}}^{(1)}\succ \cdots\succ u_{c_{g_{t-1}}}^{(t-1)}\succ u_{c_{g_0}}^{(0)}.   \label{eq:cyclgrounds}
 \end{align}
Then a \textit{multi-grounded partition} with ground $c_{g_0},\ldots,c_{g_{t-1}}$ and relation $\succ$ is a non-empty generalised coloured partition $\pi = (\pi_0,\cdots,\pi_{s-1},u_{c_{g_0}}^{(0)},\ldots, u_{c_{g_{t-1}}}^{(t-1)})$ with relation $\succ$, such that
$(\pi_{s-t},\cdots,\pi_{s-1})\neq (u_{c_{g_0}}^{(0)},\ldots, u_{c_{g_{t-1}}}^{(t-1)})$ in terms of coloured integers.
\end{deff}
\noindent We denote by $\Psucm$ the set of multi-grounded partitions with ground $g_0,\ldots,g_{t-1}$ and relation $\succ$.

\begin{ex}
\label{ex}
 Let us consider the set of colours $\Co=\{c_1,c_2,c_3\}$, the matrix 
 $$
 M = 
 \begin{pmatrix}
   2&2&2\\
   0&0&2\\
   -2&0&2\\
 \end{pmatrix},$$
 and define the relation $\succ$ on $\Z_{\C}$ by $k_{c_b}\succ k'_{c_{b'}} \text{ if and only if } k-k' \geq M(b'\ot b)$.
 
If we choose $(g_0,g_1) = (1,3)$, the pair $(u^{(0)},u^{(1)})=(1,-1)$ is the unique pair satisfying \eqref{eq:sumu} and \eqref{eq:cyclgrounds}. The generalised coloured partitions
$(3_{c_3},3_{c_2},3_{c_1},-1_{c_3},1_{c_1},-1_{c_3})$  and $(1_{c_3},3_{c_1},1_{c_3},3_{c_1},-1_{c_3},1_{c_1},-1_{c_3})$
are examples of multi-grounded partitions with ground $c_1,c_3$ and relation $\succ$, while $(1_{c_1},-1_{c_3},1_{c_1},-1_{c_3})$ and $(2_{c_1},1_{c_1},-1_{c_3})$
are not.
\end{ex}

\begin{rem}
In \Def{deff:multiground}, note that Conditions \eqref{eq:sumu} and \eqref{eq:cyclgrounds} are the same for any cyclic permutation of $c_{g_0},\ldots,c_{g_{t-1}}$. Thus, if grounded partitions are well-defined for fixed grounds $c_{g_0},\ldots,c_{g_{t-1}}$ and relation $\succ$, then grounded 
partitions for any cyclic permutation 
\linebreak $c_{g_i},\ldots,c_{g_{t-1}},c_{g_0},\ldots,c_{g_{i-1}}$ of the grounds and relation $\succ$ are also well-defined. Moreover, if the unique coloured integers corresponding to the ground $c_{g_0},\ldots,c_{g_{t-1}}$ are 
$u_{c_{g_0}}^{(0)},\ldots, u_{c_{g_{t-1}}}^{(t-1)}$, then the unique coloured integers corresponding to the ground 
$c_{g_i},\ldots,c_{g_{t-1}},c_{g_0},\ldots,c_{g_{i-1}}$ 
are $u_{c_{g_i}}^{(i)},\ldots, u_{c_{g_{t-1}}}^{(t-1)}, u_{c_{g_0}}^{(0)},\ldots, u_{c_{g_{i-1}}}^{(i-1)}.$
\end{rem}

Note that grounded partitions are the particular case of multi-grounded partitions where the ground is just one colour.

\subsection{Connection with perfect crystals}
\label{section:crystalsandpartitions}
As we did with grounded partitions, we now establish a connection between multi-grounded partitions and ground state paths of perfect crystals. The difference is that now, multi-grounded partitions allow us to treat the case where the ground state path is not a constant sequence.

Let $\B$ be a perfect crystal of level $\ell$, and let $\lambda \in \bar P_{\ell}^+$ be a level $\ell$ dominant classical weight with ground state path $\p_{\lambda}=(g_k)_{k\geq 0}$.
The finitude of the set $P_{\ell}$ implies the periodicity of the sequence $(g_i)_{i\geq 0}$ (see  \eqref{eq:lamb}). Let us set $t$ to be the \textit{period} of the ground state path, i.e. the smallest non-negative integer $k$ such that $g_k=g_0$.
Let $H$ be an energy function on $\B\ot\B$. Since $\B\ot\B$ is connected, $H$ is uniquely determined by fixing its value on a particular $b_0 \ot b_0' \in \B \ot \B$. 

We now define the function $H_{\lambda}$, for all $b, b' \in \B\ot\B$, by
  \begin{equation}\label{eq:Hlamb}
   H_{\lambda} (b\ot b') := H(b\ot b')-\frac{1}{t}\sum_{k=0}^{t-1} H(g_{k+1}\ot g_k)\,.
  \end{equation}
Thus we have
\begin{equation}
\label{eq:Hlambnul}
\sum_{k=0}^{t-1} H_{\lambda}(g_{k+1}\ot g_k)=0.
\end{equation}
The function $H_{\lambda}$ satisfies all the properties of energy functions, except that it only has integer values when $t$ divides $\sum_{k=0}^{t-1} H(g_{k+1}\ot g_k).$
In the particular case where $t=1$ (where the ground state path is constant), the function $H_{\lambda}$ is the unique energy function which satisfies $H_{\lambda}(g_0\ot g_0) = 0$. This condition plays a key role in the connection between grounded partitions and $\lambda$-paths.

  Note that for any energy function $H$, we always have  
\[\sum_{k=0}^{t-1} (k+1) H_{\lambda}(g_{k+1}\ot g_k)= \sum_{k=0}^{t-1} (k+1) H(g_{k+1}\ot g_k) -\frac{t+1}{2}\sum_{k=0}^{t-1} H(g_{k+1}\ot g_k) \in \frac{1}{2}\Z  .\]
The quantity above is an integer as soon as $t$ is odd, and is equal to $0$ when $t=1$. Thus we can choose a suitable divisor $D$ of $2t$ such that 
$DH_{\lambda}(\B\ot\B)\subset \Z$ and $\frac{1}{t}\sum_{k=0}^{t-1} (k+1) DH_{\lambda}(g_{k+1}\ot g_k) \in \Z$. \textbf{In the whole paper, $D$ always denotes such an integer.}

For the particular case $t=1$, which corresponds to the case of a constant ground state path, we can always choose $D=1$.

Let us now consider the set of colours $\Co_{\B}$ indexed by $\B$, and let us define, for the remainder of this paper, the relations $\gtrdot$  and $\gg$ on $\Z_{\Co_{\B}}$ by 
\begin{align}\label{eq:DHLeg}
 k_{c_b}\gtrdot k'_{c_{b'}} &\Longleftrightarrow k-k'= DH_\lambda(b'\ot b), \\
\label{eq:DHLin}
 k_{c_b}\gg k'_{c_{b'}} &\Longleftrightarrow k-k'\geq DH_\lambda(b'\ot b).
\end{align}
We can define multi-grounded partitions associated with these relations, as can be seen in the next proposition.

\begin{prop}
\label{prop:multigroundedparts}
The set $\Pppm$ (resp. $\Ppm$) of multi-grounded partitions with ground $c_{g_0},\ldots,c_{g_{t-1}}$ and relation $\gtrdot$ (resp. $\gg$) is the set of non-empty generalised coloured partitions $\pi = (\pi_0,\cdots,\pi_{s-1},u_{c_{g_0}}^{(0)},\ldots, u_{c_{g_{t-1}}}^{(t-1)})$ with relation $\gtrdot$ (resp. $\gg$), such that
$(\pi_{s-t},\cdots,\pi_{s-1})\neq (u_{c_{g_0}}^{(0)},\ldots, u_{c_{g_{t-1}}}^{(t-1)})$, and for all $k \in \{0, \dots , t-1\},$
\begin{equation}\label{eq:choiceofu}
 u^{(k)} = -\frac{1}{t} \sum_{\ell=0}^{t-1} (\ell+1) DH_{\lambda}(g_{\ell+1}\ot g_\ell) + \sum_{\ell=k}^{t-1}DH_{\lambda}(g_{\ell+1}\ot g_\ell)\,.
\end{equation}
\end{prop}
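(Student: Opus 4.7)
The plan is to verify two things: (i) the explicit values in \eqref{eq:choiceofu} satisfy \eqref{eq:sumu} and \eqref{eq:cyclgrounds} for both relations $\gtrdot$ and $\gg$, and (ii) they are the only integers that do so, thereby showing that the setup of \Def{deff:multiground} is realized with these $u^{(k)}$ and that the proposition's description of $\Pppm$ and $\Ppm$ is correct.

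For existence, I would first observe that each $u^{(k)}$ is an integer by the very choice of $D$ recalled just before the proposition: the shift $-\frac{1}{t}\sum_{\ell=0}^{t-1}(\ell+1)DH_{\lambda}(g_{\ell+1}\ot g_\ell)$ lies in $\Z$, and each $DH_\lambda(g_{\ell+1}\ot g_\ell)$ is an integer. Summing \eqref{eq:choiceofu} over $k=0,\ldots,t-1$ and swapping the order of summation, the double sum $\sum_{k=0}^{t-1}\sum_{\ell=k}^{t-1}DH_\lambda(g_{\ell+1}\ot g_\ell)$ equals $\sum_{\ell=0}^{t-1}(\ell+1)DH_\lambda(g_{\ell+1}\ot g_\ell)$, which exactly cancels the shift and yields \eqref{eq:sumu}. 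For the chain \eqref{eq:cyclgrounds}, a direct subtraction from \eqref{eq:choiceofu} gives $u^{(k)}-u^{(k+1)}=DH_\lambda(g_{k+1}\ot g_k)$ for $k=0,\ldots,t-2$, and the wraparound $u^{(t-1)}-u^{(0)}=DH_\lambda(g_t\ot g_{t-1})$ follows using the crucial identity \eqref{eq:Hlambnul} together with the periodicity $g_t=g_0$.

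For uniqueness, I would distinguish the two relations. For $\gtrdot$, the chain \eqref{eq:cyclgrounds} provides $t$ linear equations $u^{(k)}-u^{(k+1\bmod t)}=DH_\lambda(g_{k+1}\ot g_k)$ whose cyclic sum vanishes by \eqref{eq:Hlambnul}; hence they are consistent and have rank $t-1$. Combined with the normalization \eqref{eq:sumu}, the resulting $t\times t$ linear system has rank $t$ and admits a unique solution, which must be the one given by \eqref{eq:choiceofu}.

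The subtler case, and the one I expect to be the main obstacle, is $\gg$, since here \eqref{eq:cyclgrounds} only supplies inequalities $u^{(k)}-u^{(k+1\bmod t)}\geq DH_\lambda(g_{k+1}\ot g_k)$. Summing them cyclically and invoking \eqref{eq:Hlambnul} once more, the total collapses to $0\geq 0$, forcing every individual inequality to be an equality. This reduces the $\gg$ case to the $\gtrdot$ case, and shows that the normalization \eqref{eq:Hlamb}---whose sole purpose is to guarantee \eqref{eq:Hlambnul}---is precisely what makes the entire multi-grounded partition framework well-defined.
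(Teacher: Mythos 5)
Your proposal is correct and follows essentially the same route as the paper: verify \eqref{eq:sumu} by swapping the order of summation, verify \eqref{eq:cyclgrounds} by direct subtraction, and obtain uniqueness by summing the cyclic chain of constraints and invoking \eqref{eq:Hlambnul} to force every inequality into an equality, after which \eqref{eq:sumu} pins down $u^{(0)}$. The only cosmetic difference is that you phrase the final step for $\gtrdot$ as a rank argument on a $t\times t$ linear system, where the paper instead extracts $u^{(0)}$ explicitly from the telescoping identity $0=\sum_k u^{(k)}=\sum_k k\,(u^{(k-1)}-u^{(k)})+tu^{(0)}$; both rest on the same two facts.
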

\begin{proof}
First, we check that the colours $c_{g_0},\ldots,c_{g_{t-1}}$  and the coloured integers $u_{c_{g_0}}^{(0)},\ldots, u_{c_{g_{t-1}}}^{(t-1)}$ satisfy the conditions in \Def{deff:multiground} for both relations $\gtrdot$ and $\gg$.

We have $u^{(k)}- u^{(k+1)}=DH_{\lambda}(g_{k+1}\ot g_k)$, so \eqref{eq:cyclgrounds} is true for both $\gtrdot$ and $\gg$. To check that \eqref{eq:sumu} is true as well, we do the computation:
\begin{align*}
\sum_{k=0}^{t-1} u^{(k)} &= \sum_{k=0}^{t-1} \left(-\frac{1}{t} \sum_{\ell=0}^{t-1} (\ell+1) DH_{\lambda}(g_{\ell+1}\ot g_\ell) + \sum_{\ell=k}^{t-1}DH_{\lambda}(g_{\ell+1}\ot g_\ell)\right)\\
&=  - \sum_{\ell=0}^{t-1} (\ell+1) DH_{\lambda}(g_{\ell+1}\ot g_\ell) + \sum_{k=0}^{t-1} \sum_{\ell=k}^{t-1}DH_{\lambda}(g_{\ell+1}\ot g_\ell)\\
&=0.
\end{align*}

Moreover, the choice of integers $u^{(0)},\ldots, u^{(t-1)}$ is unique. Indeed, by 
\eqref{eq:DHLeg} and \eqref{eq:cyclgrounds}, they satisfy $u^{(k)}-u^{(k+1)} \geq DH_{\lambda}(g_{k+1}\ot g_k)$ for all $k\in \{0,\ldots,t-1\}$ (with the convention that $u^{(0)}=u^{(t)}$). Thus, by \eqref{eq:Hlambnul},
they must satisfy
 \begin{align*}
   0 &= u^{(0)}-u^{(1)}+u^{(1)}-u^{(2)}+\cdots+u^{(t-2)}-u^{(t-1)}+u^{(t-1)}-u^{(0)}\\
     &\geq DH_{\lambda}(g_{1}\ot g_0)+ DH_{\lambda}(g_{2}\ot g_1)+ \cdots+DH_{\lambda}(g_{t-1}\ot g_{t-2})+ DH_{\lambda}(g_{t}\ot g_{t-1})\\
     &=0\,,
 \end{align*}
and this implies that $u^{(k)}-u^{(k+1)} = DH_{\lambda}(g_{k+1}\ot g_k)$ for all $k\in \{0,\ldots,t-1\}$.

Finally, by \eqref{eq:sumu}, we have
\begin{align*}
 0&=u^{(0)}+\cdots+u^{(t-1)}\\
 &= u^{(0)}-u^{(1)}+ 2(u^{(1)}-u^{(2)})+\cdots+ t (u^{(t-1)}-u^{(0)}) + tu^{(0)}\,,
\end{align*}
which gives us the value of $u^{(0)}$:
$$u^{(0)} = -\frac{1}{t}\sum_{k=0}^{t-1} (k+1) DH_{\lambda}(g_{k+1}\ot g_k) .$$
The other values \eqref{eq:choiceofu} are then fully determined by the equalities $u^{(k)}-u^{(k+1)} = DH_{\lambda}(g_{k+1}\ot g_k)$.
\end{proof}

To make a more general connection between multi-grounded partitions and character formulas, we define some additional sets of multi-grounded partitions.
For any positive integer $d$, we denote by 
$^d\Ppm$ the set of multi-grounded partitions $\pi=(\pi_0,\cdots,\pi_{s-1},u_{c_{g_0}}^{(0)},\ldots, u_{c_{g_{t-1}}}^{(t-1)})$ of $\Ppm$ such that for all $k\in\{0,\ldots,s-1\}$,
\begin{equation*}
 \pi_k-\pi_{k+1} - DH_{\lambda}(p_{k+1}\ot p_{k}) \in d\Z_{\geq 0}\,,
\end{equation*}
where $c(\pi_k)=c_{p_k}$ and $\pi_s=u_{c_{g_0}}^{(0)}$. 

Finally, let $_t^d\Ppm$ (resp. $_t\Ppm$, $_t\Pppm$) denote the set of multi-grounded partitions of $^d\Ppm$ (resp. $\Ppm$, $\Pppm$) whose number of parts is divisible by $t$.

\begin{ex}
Assume that $DH_{\lambda}=M$ given in Example \ref{ex}. Then $^2\Pp_{c_1,c_3}$ is the set of multi-grounded partitions $\pi=(\pi_0,\cdots,\pi_{s-1},1_{c_1},-1_{c_3})$ of $\Pp_{c_1,c_3}$ such that for all $k\in\{0,\ldots,s-1\}$,
\begin{equation*}
 \pi_k-\pi_{k+1} - DH_{\lambda}(p_{k+1}\ot p_{k}) \in 2\Z_{\geq 0}\,,
\end{equation*}

Given that all the values in the matrix $M$ are even and that these multi-grounded partitions always end with $(1_{c_1},-1_{c_3})$, the multi-grounded partitions in $^2\Pp_{c_1,c_3}$ only have $odd$ parts.
For example, the multi-grounded partition $(7_{c_1},5_{c_2},3_{c_3},1_{c_2},1_{c_1},-1c_{c_3})$ belongs to $^2\Pp_{c_1,c_3}$. It also belongs to $_2^2\Pp_{c_1,c_3}$ as its number of parts is divisible by $2$.
\end{ex}

Now that we have introduced all the relevant notation, let us repeat the main theorem from the introduction, Theorem \ref{theo:formchar2}, which makes the connection between perfect crystals and multi-grounded partitions.

\begin{manualtheorem}{1.1}
Setting $q=e^{-\delta/(d_0D)}$ and $c_b=e^{\wt b}$ for all $b\in \B$, we have $c_{g_0}\cdots c_{g_{t-1}}=1$, and the character of the irreducible highest weight $U_q(\gf)$-module
$L(\lambda)$ is given by the following expressions:
\begin{align*}
\sum_{\mu\in _t\Pppm} C(\pi)q^{|\pi|} &= e^{-\lambda}\mathrm{ch}(L(\lambda)),\\
 \sum_{\pi\in \, _t^d\Ppm} C(\pi)q^{|\pi|} &= \frac{e^{-\lambda}\mathrm{ch}(L(\lambda))}{(q^d;q^d)_{\infty}}.
\end{align*}
\end{manualtheorem}
The remainder of this section is dedicated to the proof of Theorem \ref{theo:formchar2}.

 \bi 
Let $\p=(p_k)_{k\geq 0} \in \Pp(\lambda)$ be a $\lambda$-path  different from the ground state path $\p_{\lambda} =(g_k)_{k\geq 0}$. Then, by definition, there exists a unique positive integer $m$ such that
\begin{equation}
\label{eq:defm}
\begin{aligned}
 (p_{(m-1)t},\ldots,p_{mt-1}) &\neq (g_0,\ldots,g_{t-1})\\
 (p_{m't},\ldots,p_{(m'+1)t-1}) &= (g_0,\ldots,g_{t-1}) \qquad \text{for all } m'\geq m .
\end{aligned}
\end{equation}

We start by expression the weight of $\p$ in terms of the function $H_{\lambda}$.
\begin{lem}\label{lem:wtpm}
The weight $\mathrm{wt}(\p)$ of $\p$ is given by the following formula:
\begin{equation}\label{eq:Hfinalsum}
\mathrm{wt} (\p) = \lambda + \sum_{k=0}^{mt-1} \left( \wt(p_k) - \frac{\delta}{d_0}\left(-\frac{1}{t}\sum_{\ell=0}^{t-1} (\ell+1) H_{\lambda}(g_{\ell+1}\ot g_\ell)+\sum_{\ell=k}^{mt-1} H_{\lambda}(p_{\ell+1} \ot p_\ell) \right) \right).
\end{equation}
 \end{lem}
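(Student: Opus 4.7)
The plan is to start from the second form of the $(KMN)^2$ crystal base character formula given in Theorem \ref{theorem:wtchar},
\[\mathrm{wt}(\p) = \lambda + \sum_{k=0}^{\infty}\left((\wt p_k - \wt g_k) - \frac{\delta}{d_0}\sum_{\ell=k}^{\infty}\bigl(H(p_{\ell+1}\ot p_\ell)-H(g_{\ell+1}\ot g_\ell)\bigr)\right),\]
and rewrite it in terms of $H_\lambda$ and with finite sums ending at $mt-1$. Since $p_k=g_k$ for all $k\geq mt$ by the definition of $m$ in \eqref{eq:defm}, every term with $k\geq mt$ vanishes, and the inner sum $\sum_{\ell=k}^{\infty}$ can be replaced by $\sum_{\ell=k}^{mt-1}$ without change. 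Moreover, since $H$ and $H_\lambda$ differ only by a constant by definition \eqref{eq:Hlamb}, the differences $H(p_{\ell+1}\ot p_\ell)-H(g_{\ell+1}\ot g_\ell)$ can be replaced verbatim by $H_\lambda(p_{\ell+1}\ot p_\ell)-H_\lambda(g_{\ell+1}\ot g_\ell)$.

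Next I would dispose of the $\wt g_k$ terms. Recall from \eqref{eq:lamb} that $\varphi(g_k)=\lambda_k$ and $\varepsilon(g_k)=\lambda_{k+1}$, so $\wt g_k=\lambda_k-\lambda_{k+1}$, and telescoping gives $\sum_{k=0}^{t-1}\wt g_k = \lambda_0-\lambda_t = 0$ by periodicity of the ground state path. Consequently $\sum_{k=0}^{mt-1}\wt g_k = m(\lambda_0-\lambda_t) = 0$. At this stage the expression reads
\[\mathrm{wt}(\p) = \lambda + \sum_{k=0}^{mt-1}\left(\wt p_k - \frac{\delta}{d_0}\sum_{\ell=k}^{mt-1}H_\lambda(p_{\ell+1}\ot p_\ell) + \frac{\delta}{d_0}\sum_{\ell=k}^{mt-1}H_\lambda(g_{\ell+1}\ot g_\ell)\right).\]

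The main (and only nontrivial) step is to identify the contribution of the ground-state term $\sum_{k=0}^{mt-1}\sum_{\ell=k}^{mt-1}H_\lambda(g_{\ell+1}\ot g_\ell)$ with the closed form appearing in \eqref{eq:Hfinalsum}. Writing $k=qt+r$ with $0\leq r<t$ and $0\leq q<m$, the periodicity of $(g_k)$ together with \eqref{eq:Hlambnul} yields
\[\sum_{\ell=qt+r}^{mt-1}H_\lambda(g_{\ell+1}\ot g_\ell) = \sum_{\ell=r}^{t-1}H_\lambda(g_{\ell+1}\ot g_\ell),\]
which depends on $k$ only through $r$. Summing over the $m$ values of $q$ for each residue $r$, a short interchange of summations gives
\[\sum_{k=0}^{mt-1}\sum_{\ell=k}^{mt-1}H_\lambda(g_{\ell+1}\ot g_\ell) = m\sum_{r=0}^{t-1}\sum_{\ell=r}^{t-1}H_\lambda(g_{\ell+1}\ot g_\ell) = m\sum_{\ell=0}^{t-1}(\ell+1)H_\lambda(g_{\ell+1}\ot g_\ell).\]

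Finally I compare with the target. The constant $-\frac{1}{t}\sum_{\ell=0}^{t-1}(\ell+1)H_\lambda(g_{\ell+1}\ot g_\ell)$ inside the sum $\sum_{k=0}^{mt-1}$ of \eqref{eq:Hfinalsum} contributes a total of $-m\sum_{\ell=0}^{t-1}(\ell+1)H_\lambda(g_{\ell+1}\ot g_\ell)$, which is exactly the negative of the quantity just computed, so (with the sign from $-\delta/d_0$ flipping appropriately) the two expressions for $\mathrm{wt}(\p)$ coincide. The only subtlety here is that the equality holds only after summing over $k$, not termwise; I expect this repackaging of the double sum to be the one step that needs to be laid out carefully, while everything else is formal manipulation using periodicity and \eqref{eq:Hlambnul}.
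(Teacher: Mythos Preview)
Your proof is correct and follows essentially the same route as the paper's: both truncate the sums at $mt-1$, establish $\sum_{k=0}^{t-1}\wt g_k=0$ by telescoping $\varphi(g_k)-\varepsilon(g_k)$, and reduce the ground-state contribution to $m\sum_{\ell=0}^{t-1}(\ell+1)H_\lambda(g_{\ell+1}\ot g_\ell)$ via periodicity and \eqref{eq:Hlambnul}. The only cosmetic difference is that the paper starts from the $(k+1)$-weighted form \eqref{eq:firsteq} and converts to the double sum at the end, whereas you start from the double-sum form directly and swap $H$ for $H_\lambda$ early on since the constant cancels in the difference.
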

\begin{proof}
For any positive integer $m$, we have
  \begin{align}
  \sum_{k=0}^{mt-1} (k+1) H(g_{k+1}\ot g_k)&= \frac{m(mt+1)}{2}\sum_{k=0}^{t-1} H(g_{k+1}\ot g_k) + \sum_{k=0}^{mt-1} (k+1) H_{\lambda}(g_{k+1}\ot g_k)\quad \text{by \eqref{eq:Hlamb}}\nonumber\\
  &= \frac{m(mt+1)}{2}\sum_{k=0}^{t-1} H(g_{k+1}\ot g_k) + m\sum_{k=0}^{t-1} (k+1) H_{\lambda}(g_{k+1}\ot g_k), \label{eq:Hlambm}
  \end{align}
where the last equality follows from the periodicity of $\p_{\lambda}$.
  
  On the other hand, we have
\begin{align}
\label{eq:wtsumnul}
 \sum_{k=0}^{t-1} \wt(g_k)=0.
\end{align}
Indeed,
 \begin{align*}
  \sum_{k=0}^{t-1} \wt(g_k)&= \sum_{k=0}^{t-1} \varphi(g_k)-\varepsilon(g_k) \qquad \qquad  \text{ by defintion}\\
  &= \sum_{k=0}^{t-1} \varphi(g_k)-\varphi(g_{k+1}) \qquad \qquad \text{by \eqref{eq:lamb}}\\
  &= \varphi(g_0)-\varphi (g_{0}).&
  \end{align*}

Therefore, computing the weight $\mathrm{wt} (\p)$ given by \eqref{eq:firsteq} yields:
\begin{align*}
\mathrm{wt}(\p) &= \lambda + \sum_{k=0}^\infty \left(\wt (p_k) -\wt (g_k)\right)  - \frac{\delta}{d_0}\sum_{k=0}^\infty (k+1)\Big(H(p_{k+1} \ot p_k) - H(g_{k+1}\ot g_k)\Big)\\
 &= \lambda + \sum_{k=0}^\infty \left(\wt (p_k) -\wt (g_k)\right)  - \frac{\delta}{d_0}\sum_{k=0}^{mt-1} (k+1)\Big(H(p_{k+1} \ot p_k) - H(g_{k+1}\ot g_k)\Big)\\
 &= \lambda + \sum_{k=0}^{mt-1} \wt(p_k) - \frac{\delta}{d_0}\sum_{k=0}^{mt-1} (k+1)H_{\lambda}(p_{k+1} \ot p_k) + \frac{m\delta}{d_0} \sum_{k=0}^{t-1} (k+1) H_{\lambda}(g_{k+1}\ot g_k),
\end{align*}
where the last equality follows from \eqref{eq:Hlambm} and \eqref{eq:wtsumnul}.
Equation \eqref{eq:Hfinalsum} then follows from the fact that 
$$\sum_{k=0}^{mt-1} (k+1) H_{\lambda}(p_{k+1}\ot p_k) = \sum_{k=0}^{mt-1} \sum_{\ell=k}^{mt-1} H_{\lambda}(p_{\ell+1}\ot p_\ell).$$
\end{proof}

\bi
We now give a bijection between $\lambda$-paths and multi-grounded partitions.
\begin{prop}
\label{prop:flat2}
Let us define the map $\phi$ from $\Pp(\lambda)$ to $\Pppm$,  such that  $\phi(\p_{\lambda}) = (u_{c_{g_0}}^{(0)},\ldots, u_{c_{g_{t-1}}}^{(t-1)})$, and for all  $\p=(p_k)_{k\geq 0} \in \Pp(\Lambda)$ different from $\p_{\lambda}$ and $m$ defined in \eqref{eq:defm},
\[ \phi(\p) = (\pi_0,\cdots,\pi_{mt-1},u_{c_{g_0}}^{(0)},\ldots, u_{c_{g_{t-1}}}^{(t-1)})\]
where for all $k\in \{0,\cdots,mt-1\}$, $c(\pi_k) = c_{p_k}$ and 
\[
 \pi_k = -\frac{1}{t}\sum_{\ell=0}^{t-1} (\ell+1) DH_{\lambda}(g_{\ell+1}\ot g_\ell)+\sum_{\ell=k}^{mt-1} DH_{\lambda}(p_{\ell+1} \ot p_\ell)\,.
\]
 Then $\phi$ defines a bijection between $\Pp(\lambda)$ and the set $_t\Pppm$ of partitions of $\Pppm$ whose number of parts is divisible by $t$.
 
 Furthermore, by setting $c_b=e^{\wt(b)}$ for all $b\in \B$, we have for all $\pi \in\, _t\Pppm$, 
\begin{equation}\label{eq:pacam}
e^{-\Ll+\mathrm{wt} (\phi^{-1}(\pi))}  = C(\pi)e^{-\frac{\delta |\pi|}{d_0D}}.
\end{equation}
\end{prop}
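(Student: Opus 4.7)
The plan is to establish Proposition~\ref{prop:flat2} in three stages: verify that $\phi$ lands in $\,_t\Pppm$, construct an explicit inverse, and derive the weight identity \eqref{eq:pacam} from Lemma~\ref{lem:wtpm}.

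For the first stage I treat $\p = \p_\lambda$ separately: by definition its image is the tail-only partition $(u^{(0)}_{c_{g_0}},\ldots,u^{(t-1)}_{c_{g_{t-1)}}})$, which lies in $\,_t\Pppm$ with a head of length $0$. Otherwise, using the integer $m$ from \eqref{eq:defm}, the telescoping formula for $\pi_k$ immediately gives $\pi_k - \pi_{k+1} = DH_\lambda(p_{k+1}\otimes p_k)$ for $0 \leq k \leq mt-2$, which is the relation $\gtrdot$. For the junction between $\pi_{mt-1}$ and the first tail entry $u^{(0)}_{c_{g_0}}$ I invoke \eqref{eq:Hlambnul} to collapse $u^{(0)}$ to $-\tfrac{1}{t}\sum_\ell (\ell+1) DH_\lambda(g_{\ell+1}\otimes g_\ell)$ and then use $p_{mt} = g_0$ to obtain $\pi_{mt-1} - u^{(0)} = DH_\lambda(g_0 \otimes p_{mt-1})$; the tail itself is already handled by Proposition~\ref{prop:multigroundedparts}. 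The non-triviality clause in Definition~\ref{deff:multiground} translates to $(p_{(m-1)t},\ldots,p_{mt-1}) \neq (g_0,\ldots,g_{t-1})$, which is the minimality of $m$, and the count $mt$ of non-tail parts is divisible by $t$.

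For the inverse, given $\pi \in \,_t\Pppm$ with $s = mt$ non-tail parts, I define $p_k$ to be the unique element of $\B$ with $c_{p_k} = c(\pi_k)$ for $0 \leq k < mt$ and set $p_k := g_{k \bmod t}$ for $k \geq mt$; periodicity of $(g_k)$ ensures that the result is a genuine $\lambda$-path matching the prescribed tail. The $\gtrdot$ equalities of $\pi$ invert the telescoping formula, recovering the $\pi_k$, and the non-triviality clause delivers the correct integer $m$. The two assignments are mutually inverse by construction.

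For the weight identity, comparing Lemma~\ref{lem:wtpm} with the defining formula for $\pi_k$ shows that $\pi_k / D$ coincides with the bracketed quantity in the second summand of \eqref{eq:Hfinalsum}, so
\[
\mathrm{wt}(\p) - \lambda = \sum_{k=0}^{mt-1}\wt(p_k) \;-\; \frac{\delta}{d_0 D}\sum_{k=0}^{mt-1}\pi_k.
\]
By \eqref{eq:sumu} the tail contributes zero to $|\pi|$, giving $|\pi| = \sum_{k=0}^{mt-1}\pi_k$, and by \eqref{eq:wtsumnul} we have $c_{g_0}\cdots c_{g_{t-1}} = 1$, so $C(\pi) = e^{\sum_{k=0}^{mt-1}\wt(p_k)}$. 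Exponentiating yields \eqref{eq:pacam}. The one genuinely delicate step is the junction check at $k = mt-1$, where the vanishing \eqref{eq:Hlambnul} (equivalently, the defining property of $H_\lambda$) is essential; the rest is bookkeeping.
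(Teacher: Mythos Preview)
Your proof is correct and follows essentially the same approach as the paper's own argument: verify $\phi$ lands in $\,_t\Pppm$ via the telescoping differences and the junction at $k=mt-1$, build the inverse by reading off the $p_k$ from the colours and extending periodically, and deduce \eqref{eq:pacam} from Lemma~\ref{lem:wtpm} together with \eqref{eq:sumu} and \eqref{eq:wtsumnul}. The only place where you are slightly terser than the paper is the inverse direction: when you write ``the non-triviality clause delivers the correct integer $m$'', the paper spells out the contrapositive explicitly, namely that if $(p_{(m-1)t},\ldots,p_{mt-1})=(g_0,\ldots,g_{t-1})$ then the equality constraints of $\gtrdot$ force $(\pi_{(m-1)t},\ldots,\pi_{mt-1})=(u^{(0)}_{c_{g_0}},\ldots,u^{(t-1)}_{c_{g_{t-1}}})$, contradicting Definition~\ref{deff:multiground}; this is implicit in your sentence but worth making explicit since the non-triviality clause concerns coloured integers, not just colours.
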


\begin{proof}
Let $\p \in \Pp(\lambda)$ and $\pi = \phi(\p)$.
First, let us check that $\pi$ belongs to $_t\Pppm$. The multi-grounded partition $\pi$ has $(m+1)t$ parts, so its number of parts is indeed divisible by $t$.
Moreover, we have $\pi_k\gtrdot\pi_{k+1}$ for all $k\in \{0,mt-2\}$, because $\pi_k-\pi_{k+1} = DH_{\lambda}(p_{k+1} \ot p_k)$.
By \eqref{eq:Hlambnul} and \eqref{eq:choiceofu}, we have
$u^{(0)} = -\frac{1}{t}\sum_{\ell=0}^{t-1} (\ell+1) DH_{\lambda}(g_{\ell+1}\ot g_\ell)$, so that
$\pi_{mt-1} - u^{(0)} = DH_{\lambda}(p_{mt}\ot p_{mt-1})$ and $\pi_{mt-1}\gtrdot u^{(0)}_{c_{g_0}}$.
Finally, since $(p_{(m-1)t},\ldots,p_{mt-1})\neq (g_0,\ldots,g_{t-1})$, we necessarily have that
\linebreak 
$(\pi_{(m-1)t},\cdots,\pi_{mt-1})\neq (u_{c_{g_0}}^{(0)},\ldots, u_{c_{g_{t-1}}}^{(t-1)})$ in terms of coloured integers. Thus $\pi$ is indeed a multi-grounded partition belonging to $_t\Pppm$

\m Let us now give the inverse bijection $\phi^{-1}$. Start with $\pi=(\pi_0,\ldots,\pi_{mt-1},u_{c_{g_0}}^{(0)},\ldots, u_{c_{g_{t-1}}}^{(t-1)}) \in\, _t\Pppm$, with $m>0$ and colour sequence
$c_{p'_0}\cdots c_{p'_{mt-1}} c_{g_0}\cdots c_{g_{t-1}}$.
We set $\phi^{-1}(\pi) = (p_k)_{k\geq 0}$, where $p_{m't+i}= g_i$ for all $m'\geq m$ and  $i\in\{0,\ldots,t-1\}$, and $p_k= p'_k$ for all $k\in \{0,\ldots,mt-1\}$.
\begin{itemize}
 \item We first show that $(p_{(m-1)t},\ldots,p_{mt-1})\neq (g_0,\ldots,g_{t-1})$. Assume for the purpose of contradiction that $(p_{(m-1)t},\ldots,p_{mt-1})=(g_0,\ldots,g_{t-1})$.
 We then obtain by \eqref{eq:DHLeg} that 
 $$\pi_{(m-1)t+k} = -\frac{1}{t}\sum_{\ell=0}^{t-1}(\ell+1) DH_{\lambda}(g_{\ell+1}\ot g_\ell) + \sum_{\ell=k}^{t-1}DH_{\lambda}(g_{\ell+1}\ot g_\ell) = u_{c_{g_{k}}}^{(k)}.$$
 This contradicts the fact that $(\pi_{(m-1)t},\cdots,\pi_{mt-1})\neq (u_{c_{g_0}}^{(0)},\ldots, u_{c_{g_{t-1}}}^{(t-1)})$ in terms of colored integers.
 \item By \eqref{eq:DHLeg}, we also have, for all $k\in \{0,\ldots,mt-1\}$,  $\pi_k-\pi_{k+1}=DH_{\lambda}(p_{k+1}\ot p_k).$ Therefore
\begin{equation}
\label{eq:4}
\pi_k = u^{(0)} + \sum_{\ell=k}^{mt-1}(\pi_\ell-\pi_{\ell+1})= -\frac{1}{t}\sum_{\ell=0}^{t-1} (\ell+1) DH_{\lambda}(g_{\ell+1}\ot g_\ell) + \sum_{\ell=k}^{mt-1} DH_{\lambda}(p_{\ell+1}\ot p_\ell).
\end{equation}

\end{itemize}
With what precedes, we have $\phi(\phi^{-1}(\pi))=\pi$ and $\phi^{-1}(\phi(\p))=\p$.

Finally, we obtain \eqref{eq:pacam} by \Lem{lem:wtpm}. 
\end{proof}

\bi
Similarly to the case of grounded partitions, we end this section with a bijection connecting the multi-grounded partitions with relations $\gg$ and $\gtrdot$.

\begin{prop}\label{prop:diff2}
Let $^d\Pp$ be the set of classical partitions where all parts are divisible by $d$. There is a bijection $\Phi_d$ between $_t^d\Ppm$ and $_t\Pppm\times \, ^d\Pp$, such that if $\Phi_d(\pi) =(\mu,\nu)$, then $|\pi|=|\mu|+|\nu|$, and by setting $c_{g_0}\cdots c_{g_{t-1}}=1$, we have $C(\pi)=C(\mu)$.
\end{prop}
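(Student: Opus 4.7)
The plan is to mimic the proof of Proposition~\ref{prop:diff} in the multi-grounded setting, defining $\Phi_d$ by extracting the ``slack'' in each difference of $\pi \in {}_t^d\Ppm$ into a classical partition $\nu \in {}^d\Pp$, and absorbing any trailing blocks of ground colours into the ground to produce a canonical $\mu \in {}_t\Pppm$.

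For the forward map, given $\pi = (\pi_0,\ldots,\pi_{s-1},u_{c_{g_0}}^{(0)},\ldots,u_{c_{g_{t-1}}}^{(t-1)})$ with colour sequence $c_{p_0}\cdots c_{p_{s-1}}c_{g_0}\cdots c_{g_{t-1}}$, I would first define the slacks $a_k := \pi_k - \pi_{k+1} - DH_\lambda(p_{k+1}\ot p_k) \in d\Z_{\geq 0}$ for $k=0,\ldots,s-1$ (using $p_s=g_0$, $\pi_s=u^{(0)}$), and the partial tail sums $b_k := \sum_{\ell=k}^{s-1} a_\ell$. The $b_k$ form a non-increasing sequence of non-negative multiples of $d$, so if $r$ denotes the number of non-zero $b_k$, then $\nu := (b_0,\ldots,b_{r-1})$ is an element of ${}^d\Pp$. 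I would then let $s^*\leq s$ be obtained from $s$ by iteratively removing all trailing blocks of $t$ consecutive colours equal to $(g_0,\ldots,g_{t-1})$, and let $\mu$ be the unique element of ${}_t\Pppm$ with colour sequence $c_{p_0}\cdots c_{p_{s^*-1}}c_{g_0}\cdots c_{g_{t-1}}$, whose values are tightly determined by $\mu_{s^*}=u^{(0)}$ and $\mu_k-\mu_{k+1}=DH_\lambda(p_{k+1}\ot p_k)$.

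For the inverse map, starting from $(\mu,\nu)\in{}_t\Pppm\times{}^d\Pp$ with $\mu$ of non-ground length $s^*$ and $\nu$ of length $r$, I would set $s := s^*$ when $r\leq s^*$, and otherwise let $s$ be the smallest multiple of $t$ with $s\geq r$; then I would extend $\mu$'s colour sequence by $(s-s^*)/t$ copies of $(g_0,\ldots,g_{t-1})$, pad $\nu$ with trailing zeros up to length $s$, and define $\pi_k$ as the tight value coming from the extended colour sequence plus $b_k$. The three main verifications are then: that $\pi$ lies in canonical form of ${}_t^d\Ppm$ (when $s>s^*$ this reduces to $b_{s-t}>0$, which follows from $s-t<r$ by the choice of $s$); that $|\pi|=|\mu|+|\nu|$, which after telescoping reduces to the identity $\sum_{j=0}^{t-1}u^{(j)}=0$ from \eqref{eq:sumu}; and that $C(\pi)=C(\mu)\cdot(c_{g_0}\cdots c_{g_{t-1}})^{(s-s^*)/t}=C(\mu)$ under the normalisation $c_{g_0}\cdots c_{g_{t-1}}=1$.

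The main obstacle lies in the subtle case where the trailing $t$ colours of $\pi$ coincide with the ground cycle $(g_0,\ldots,g_{t-1})$ but at least one $b_k$ with $k\geq s-t$ is positive, so that $\pi$ remains in canonical form of ${}_t^d\Ppm$ only because of non-vanishing slack. On the $\mu$ side these trailing blocks must be \emph{absorbed} into the ground (forcing $s^*<s$); on the inverse side they must be \emph{reconstructed} from $\nu$ alone by extending $\mu$'s colour sequence with extra ground cycles when $r>s^*$. Checking that these two procedures are genuine mutual inverses—and in particular that the forward-image $\mu$ always satisfies the canonical form condition of ${}_t\Pppm$, while the backward-image $\pi$ always satisfies that of ${}_t^d\Ppm$—is the crux of the argument.
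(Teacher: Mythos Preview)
Your proposal is correct and follows essentially the same approach as the paper's proof. The paper likewise extracts $\nu_k = \pi_k - \mu_k$ (which coincides with your tail-sum $b_k$, since the ``tight'' extension $\tilde\mu_k$ of $\mu$ to all $s$ positions agrees with $\mu_k$ on $k<s^*$ and equals $u^{(k\bmod t)}$ thereafter), and handles the trailing-ground-block absorption exactly as you describe. The only cosmetic difference is that the paper presents the construction of $\nu$ via an explicit two-case analysis ($m<s$ versus $m=s$, in its block-count notation) and normalises $\nu$ to have length a multiple of $t$, whereas your slack formulation gives a single unified expression; these are equivalent descriptions of the same bijection.
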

\begin{proof}
The main trick here consists in seeing classical partitions as partitions with number of parts divisible by $t$. 
If $\nu$ is a classical partition with $rt-\ell$ parts, with $r>1$ and $\ell \in \{0, \dots , t-1\}$, it suffices to add $\ell$ parts $0$ at the end of $\nu$. Then, a non-empty partition $\nu \in\ ^d\Pp$ can be uniquely written as a non-increasing sequence $\nu = (d\nu_0,\cdots,d\nu_{rt-1})$ of non-negative multiples of $d$, with $\nu_{(r-1)t}>0$.
\m 
We set $\Phi_d(u_{c_{g_0}}^{(0)},\ldots, u_{c_{g_{t-1}}}^{(t-1)})=((u_{c_{g_0}}^{(0)},\ldots, u_{c_{g_{t-1}}}^{(t-1)}),\emptyset)$. Let us now consider
$\pi=(\pi_0,\ldots,\pi_{ts-1},u_{c_{g_0}}^{(0)},\ldots, u_{c_{g_{t-1}}}^{(t-1)})$ in  $_t^d\Ppm$, 
different from $(u_{c_{g_0}}^{(0)},\ldots, u_{c_{g_{t-1}}}^{(t-1)})$, with colour sequence $c_{p'_0}\cdots c_{p'_{ts-1}}c_{g_0}\cdots c_{g_{t-1}}$, and let us build $\Phi_d(\pi) =(\mu,\nu).$ 
We set  $ \p= (p_k)_{k\geq 0}$, with $p_{s't+i}= g_{i}$ 
for all $s'\geq s$ and  $i\in \{0,\ldots,t-1\}$, and $p_k= p'_k$ for all $k\in \{0,\ldots,st-1\}$, and
$$m=\max\{k\in \{0,\ldots,s\}: (p_{(k-1)t},\ldots,p_{kt-1})\neq (g_0,\cdots,g_{t-1})\}.$$
Since $(p_{kt},\ldots,p_{(k+1)t-1})=(g_0,\cdots,g_{t-1})$ for all $k\geq m$, with the convention $c_{g_0}\cdots c_{g_{t-1}}=1$, we obtain that 
$C(\pi)=c_{p_0}\cdots c_{p_{st-1}} = c_{p_0}\cdots c_{p_{mt-1}}$.

We set 
$$\mu=(\mu_0,\ldots,\mu_{mt-1},u_{c_{g_0}}^{(0)},\ldots, u_{c_{g_{t-1}}}^{(t-1)}):= \phi(\p).$$
By \eqref{eq:4}, for all $k\in \{0,\ldots,mt-1\}$, the part $\mu_k$ has colour $c_{p_k}$ and size
$$u^{(0)}+\sum_{\ell=k}^{mt-1} DH_{\lambda}(p_{\ell+1}\ot p_\ell).$$
Thus we have $C(\pi)= C(\mu)$.
\m
Let us now build $\nu = (\nu_0,\ldots,\nu_{rt-1})$ in $^d\Pp$. We distinguish two different cases.

\begin{enumerate}
\item If $m<s$, we set $r=s$ and $\nu = (\nu_0,\ldots,\nu_{st-1})$, where 
$$\begin{cases}
\nu_k&=\ \pi_k-\mu_k \qquad \qquad \text{for } k \in \{0,\ldots,mt-1\},\\
\nu_{kt+i}&=\ \pi_k-u^{(i)}\qquad \quad \ \text{ for } k\in \{m,\ldots,s-1\}\  \text{and  } i\in \{0,\ldots,t-1\}.
\end{cases}$$
Therefore, for all $k\in \{0,\ldots,mt-2\}$, we have
\begin{align*}
\nu_{k}-\nu_{k+1} &= \pi_{k}-\pi_{k+1} - \mu_k + \mu_{k+1}&\\
&= \pi_{k}-\pi_{k+1} - DH_{\lambda}(p_{k+1}\ot p_k)&\\
&\in d \Z_{\geq 0},
\end{align*}
and 
\begin{align*}
\nu_{mt-1}-\nu_{mt} &= \pi_{mt-1}-\pi_{mt}-\mu_{mt-1}+u^{(0)}&\\
&= \pi_{k}-\pi_{k+1} - DH_{\lambda}(p_{mt}\ot p_{mt-1})\\
&\in d \Z_{\geq 0}.
\end{align*}
We also have, for all $k\in \{m,\ldots,s-1\}$ and all $i\in\{0,\ldots,t-1\}$,
\begin{align*}
\nu_{kt+i}-\nu_{kt+i+1} &= \pi_{kt+i}-\pi_{kt+i+1}-u^{(i)}+u^{(i+1)}\\
&= \pi_{kt+i}-\pi_{kt+i+1}-DH_{\lambda}(p_{kt+i+1}\ot p_{kt+i})\\
&\in d \Z_{\geq 0}.
\end{align*}
%and , for all $k\in\{m+1,s-1\}$
%\begin{align*}
%\nu_{kt-1}-\nu_{kt} &= \pi_{kt-1}-\pi_{kt}-u^{(t-1)}+u^{(0)}\\
%&= \pi_{kt-1}-\pi_{kt}-DH_{\Lambda}(p_{kt}\ot p_{kt-1})\\
%&\in d \Z_{\geq 0} .
%\end{align*}
We finally observe that 
\begin{align*}
\nu_{st-1} &= \pi_{st-1}-u^{(t-1)}\\
&= \pi_{st-1}-u^{(0)}+u^{(0)}-u^{(t-1)}\\
&= \pi_{st-1}-u^{(0)}-DH_{\lambda}(p_{kt}\ot p_{kt-1})\\
&\in d \Z_{\geq 0}.
\end{align*}
We showed that the sequence $(\nu_k)_{k=0}^{st-1}$ is indeed a non-increasing sequence of multiples of $d$.

Moreover, we have $\pi_{(s-1)t}>u^{(0)}$. Indeed, $m<s$ implies that $(p'_{(s-1)t},\dots,p'_{st-1})=(g_0,\dots,g_{t-1})$. So if we had $\pi_{(s-1)t}=u^{(0)}$, then by the difference condition \eqref{eq:DHLin}, it would mean that
\linebreak
 $(\pi_{(s-1)t},\ldots,\pi_{st-1}) = (u_{c_{g_0}}^{(0)},\ldots, u_{c_{g_{t-1}}}^{(t-1)})$ as coloured integers, which contradicts the definition of multi-grounded partitions.
Thus $\nu_{(s-1)t}=\pi_{(s-1)t}-u^{(0)}>0$.

\item By definition, we have $m\leq s$, so the only other possible case is $m=s$. 
As before, we obtain that $(\pi_k-\mu_k)_{k=0}^{mt-1}$ is a non-increasing sequence of non-negative multiples of $d$. We set 
$$r=\min\{k\in \{0,\ldots,s\}: \pi_{kt}=\mu_{kt}\},$$
and $\nu_k = \pi_k-\mu_k$ for all $k\in \{0,\ldots,rt-1\}$. So, in this case too, $\nu$ belongs to $^d\Pp$ and $\nu_{(r-1)t}>0$.
\end{enumerate}

\bi
The map $\Phi_d^{-1}$ from $_t\Pppm\times \, ^d\Pp$ to $_t^d\Ppm$ simply consists in
adding the parts of\\
$\mu = (\mu_0,\ldots,\mu_{mt-1},u_{c_{g_0}}^{(0)},\ldots, u_{c_{g_{t-1}}}^{(t-1)}) \in \,_t\Pppm$ to those of $\nu = (\nu_0,\cdots,\nu_{rt-1}) \in \,^d\Pp$ to obtain a multi-grounded partition $\pi \in \,_t^d\Ppm$ in the following way: 
\begin{enumerate}
 \item if $m\geq r$, then for all $k\in \{0,\ldots,mt-1\}$, $\pi_k$ has size $\mu_k + \nu_k$ and color $c(\mu_k)$, where we set $\nu_k=0$ for all $k\in\{rt,\cdots,mt-1\}$, and we obtain the multi-grounded partition
 \[\pi = (\pi_0,\cdots,\pi_{mt-1},u_{c_{g_0}}^{(0)},\ldots, u_{c_{g_{t-1}}}^{(t-1)});\]
 \item if $m<r$, the first $mt$ parts are defined as in the case $m\geq r$, and the remaining parts are $\pi_{kt+i} = \nu_{kt+i}+u^{(i)}$ with color $c_{g_i}$ for all $k\in \{m,\ldots,r-1\}$ and $i\in\{0,\ldots,t-1\}$, and we obtain the multi-grounded partition
 \[\pi = (\pi_0,\cdots,\pi_{rt-1},u_{c_{g_0}}^{(0)},\ldots, u_{c_{g_{t-1}}}^{(t-1)}).\]
\end{enumerate}
It is easy to see that $\Phi_d$ and $\Phi_d^{-1}$ are inverses of each other, the first (resp. second) case of $\Phi_d$ being the inverse of the second (resp. first) case of $\Phi_d^{-1}$.
\end{proof}

This proposition, along with \eqref{eq:charweight} of \Thm{theorem:wtchar}, yields
Theorem \ref{theo:formchar2}.

\m Note that when $t=1$, we can choose $D=1$, and \Thm{theo:formchar} is then a particular case of \Thm{theo:formchar2}. The additional parameter $d$ allows us to have a refined equality and to simplify some calculations. It is 
particularly useful when
$DH_{\lambda}(\B\ot\B)\in d\Z$, in which case the parts of our partitions all belong to the same congruence class modulo $d$.
This is done for example in Section \ref{sec:A2n} for $A_{2n}^{(2)}$ and in Section \ref{sec:D2} for $D_{n+1}^{(2)}$.

\section{Examples of application: character formulas for several classical affine Lie algebras}
\label{sec:examples}
In this last section, we illustrate how multi-grounded partitions and \Thm{theo:formchar2} can be used to compute character formulas for standard level $1$ modules where \Thm{theo:formchar} is not easily (or not at all) applicable. All crystals can be found in the book \cite{HK}.

\subsection{The Lie algebra $A_{2n}^{(2)}(n\geq 2)$}
\label{sec:A2n}
We start by studying the algebra $A_{2n}^{(2)}$ for $n\geq 2$ to prove Theorem \ref{theo:chara2n2}.
The crystal $\B$ of the vector representation of $A_{2n}^{(2)}(n\geq 2)$ is given by the crystal graph in Figure \ref{fig1} with the weights
\begin{align*}
\wt (0)&=0,\\
\wt (u) &= -\wt (\overline{u}) = \frac{1}{2}\alpha_n+\sum_{i=u}^{n-1} \alpha_i \text{ for all $u\in \{1,\ldots,n\}$.}
\end{align*}
Here, the null root is
$\delta = \alpha_n+2\sum_{i=0}^{n-1} \alpha_i$.

\begin{figure}[H]
\begin{center}
\begin{tikzpicture}[scale=0.8, every node/.style={scale=0.8}]
%%crystal graph B
\draw (-4,0) node{$\B$ :};
% ground and ground paths
\draw (-4,-0.75) node{$b^{\Ll_0}=b_{\Ll_0}=0$};
\draw (-4,-1.5) node{$\p_{\Ll_0}=(\cdots000)$};
%vertices
\draw (-1.5,-0.75) node {$0$};
\draw (0,0) node {$1$};
\draw (1.5,0) node {$2$};
\draw (5,0) node {$n-1$};
\draw (6.75,0) node {$n$};
\draw (3.15,0) node {$\cdots$};
\draw (0,-1.5) node {$\overline{1}$};
\draw (1.5,-1.5) node {$\overline{2}$};
\draw (5,-1.5) node {$\overline{n-1}$};
\draw (6.75,-1.5) node {$\overline{n}$};
\draw (3.15,-1.5) node {$\cdots$};

% box for nodes 0
\draw (-0.2-1.5,0.2-0.75)--(0.2-1.5,0.2-0.75)--(0.2-1.5,-0.2-0.75)--(-0.2-1.5,-0.2-0.75)--(-0.2-1.5,0.2-0.75);

% boxes for nodes 1 \bar 1 ,2 bar 2, n \bar n
\foreach \x in {0,1,4.5} 
\foreach \y in {0,1}
\draw (-0.2+1.5*\x,0.2-1.5*\y)--(0.2+1.5*\x,0.2-1.5*\y)--(0.2+1.5*\x,-0.2-1.5*\y)--(-0.2+1.5*\x,-0.2-1.5*\y)--(-0.2+1.5*\x,0.2-1.5*\y);

% boxes for nodes n-1 \bar n-1
\foreach \x in {0} 
\foreach \y  in {0,1}
\draw (-0.45+5+2*\x,0.2-1.5*\y)--(0.45+5+2*\x,0.2-1.5*\y)--(0.45+5+2*\x,-0.2-1.5*\y)--(-0.45+5+2*\x,-0.2-1.5*\y)--(-0.45+5+2*\x,0.2-1.5*\y);

% 0 arrow
\draw [thick, ->] (-1.45,-0.5)--(-0.3,-0.05);
\draw [thick, <-] (-1.45,-1)--(-0.3,-1.45);
\draw (-0.8,-0.1) node {\footnotesize{$0$}};
\draw (-0.8,-1.4) node {\footnotesize{$0$}};

% 1 arrow
\draw [thick, ->] (0.3,0)--(1.2,0);
\draw [thick, <-] (0.3,-1.5)--(1.2,-1.5);
\draw (0.75,0.15) node {\footnotesize{$1$}};
\draw (0.75,-1.67) node {\footnotesize{$1$}};

% n arrow
\draw [thick, ->] (1.8,0)--(2.7,0);
\draw [thick, <-] (1.8,-1.5)--(2.7,-1.5);
\draw (2.25,0.15) node {\footnotesize{$2$}};
\draw (2.25,-1.67) node {\footnotesize{$2$}};

% n-2 arrow
\draw [thick, ->] (3.6,0)--(4.5,0);
\draw [thick, <-] (3.6,-1.5)--(4.5,-1.5);
\draw (4.05,0.15) node {\footnotesize{$n-2$}};
\draw (4.05,-1.67) node {\footnotesize{$n-2$}};

% n-1 arrow
\draw [thick, ->] (5.6,0)--(6.4,0);
\draw [thick, <-] (5.6,-1.5)--(6.4,-1.5);
\draw (6,0.15) node {\footnotesize{$n-1$}};
\draw (6,-1.67) node {\footnotesize{$n-1$}};

% n arrow
\draw [thick, ->] (6.75,-0.3)--(6.75,-1.2);
\draw (6.9,-0.75) node {\footnotesize{$n$}};

\end{tikzpicture}
\end{center}
\caption{Crystal graph $\B$ of the vector representation for the Lie algebra $A_{2n}^{(2)}(n\geq 2)$}
\label{fig1}
\end{figure}
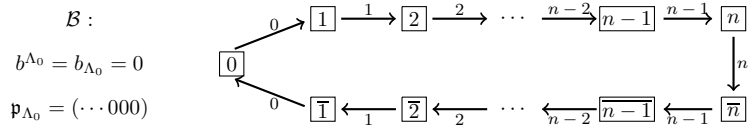

We now compute the energy function $H$ on $\B\ot \B$ such that $H(0\ot 0)=0$. To do so, we use the crystal graph of $\B\ot \B$ given in Figure \ref{fig2}. It is connected, so the energy is completely determined by the choice $H(0\ot 0)=0$. Moreover, the energy is constant on each connected component of $\B\ot \B$ after removing the $0$-arrows. More detail on the computation of energy functions can be found in \cite{HK} or \cite{DK19-2}.
 
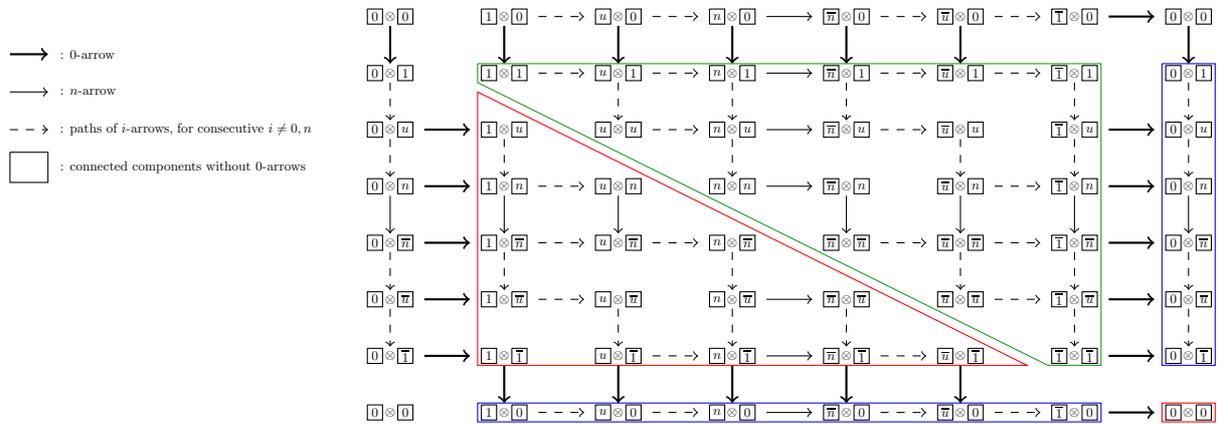
\begin{figure}[H]
\begin{center}
\begin{tikzpicture}[scale=0.5, every node/.style={scale=0.5}]

% legende
\draw [thick,->] (-10,-1)--(-9,-1);
\draw (-8.8,-1) node[right] {: $0$-arrow};

\draw [->] (-10,-2)--(-9,-2);
\draw (-8.8,-2) node[right] {: $n$-arrow};

\draw [dashed,->] (-10,-3)--(-9,-3);
\draw (-8.8,-3) node[right] {: paths of $i$-arrows, for consecutive $i \neq 0,n$};

\draw (-10,-3.6)--(-9,-3.6)--(-9,-4.4)--(-10,-4.4)--(-10,-3.6);
\draw (-8.8,-4) node[right] {: connected components without $0$-arrows};

%%general remark: I use \x to move horizontally and \y vertically
 
%%crystal graph B

%boxes for vertices

%leftside  the tensor 
\foreach \x in {0,...,7} 
\foreach \y in {0,...,7}
\draw (-0.2-0.4+3*\x,0.2-1.5*\y)--(0.2-0.4+3*\x,0.2-1.5*\y)--(0.2-0.4+3*\x,-0.2-1.5*\y)--(-0.2-0.4+3*\x,-0.2-1.5*\y)--(-0.2-0.4+3*\x,0.2-1.5*\y);

%rightside of the tensor
\foreach \x in {0,...,7} 
\foreach \y in {0,...,7}
\draw (-0.2+0.4+3*\x,0.2-1.5*\y)--(0.2+0.4+3*\x,0.2-1.5*\y)--(0.2+0.4+3*\x,-0.2-1.5*\y)--(-0.2+0.4+3*\x,-0.2-1.5*\y)--(-0.2+0.4+3*\x,0.2-1.5*\y);

% tensor
\foreach \x in {0,...,7} 
\foreach \y in {0,...,7}
\draw (3*\x,-1.5*\y) node {$\ot$};

%filling the boxes

\foreach \x in {0,...,7} 
\draw (0.4+3*\x,0) node {$0$};
\foreach \y in {0,...,7} 
\draw (-0.4,-1.5*\y) node {$0$};

\foreach \x in {0,...,7} 
\draw (0.4+3*\x,-1.5) node {$1$};
\foreach \y in {0,...,7} 
\draw (-0.4+3,-1.5*\y) node {$1$};

\foreach \x in {0,...,7} 
\draw (0.4+3*\x,-3) node {$u$};
\foreach \y in {0,...,7} 
\draw (-0.4+6,-1.5*\y) node {$u$};

\foreach \x in {0,...,7} 
\draw (0.4+3*\x,-4.5) node {$n$};
\foreach \y in {0,...,7} 
\draw (-0.4+9,-1.5*\y) node {$n$};

\foreach \x in {0,...,7} 
\draw (0.4+3*\x,-6) node {$\overline{n}$};
\foreach \y in {0,...,7} 
\draw (-0.4+12,-1.5*\y) node {$\overline{n}$};

\foreach \x in {0,...,7} 
\draw (0.4+3*\x,-7.5) node {$\overline{u}$};
\foreach \y in {0,...,7} 
\draw (-0.4+15,-1.5*\y) node {$\overline{u}$};

\foreach \x in {0,...,7} 
\draw (0.4+3*\x,-9) node {$\overline{1}$};
\foreach \y in {0,...,7} 
\draw (-0.4+18,-1.5*\y) node {$\overline{1}$};

\foreach \x in {0,...,7} 
\draw (0.4+3*\x,-10.5) node {$0$};
\foreach \y in {0,...,7} 
\draw (-0.4+21,-1.5*\y) node {$0$};

% upper  vertical 0 arrow from 0 to 1
\foreach \x in {0,...,5,7} 
\draw [thick, ->] (3*\x,-0.25)--(3*\x,-1.25);
%mostright horizontal 0 arrows from \bar 1 to 0
\foreach \y in {0,2,3,4,5,6,7} 
\draw [thick, ->] (18.9,-1.5*\y)--(20.1,-1.5*\y);

% mid..upper vertical sequence of arrows from 1 to u
\foreach \x in {0,2,3,4,6,7} 
\draw [dashed, ->] (3*\x,-1.75)--(3*\x,-2.75);
% mid..right horizontal sequence of arrows from bar u to \bar 1
\foreach \y in {0,1,3,4,5,7} 
\draw [dashed, ->] (15.9,-1.5*\y)--(17.1,-1.5*\y);

% mid..upper vertical sequence of arrows from u to n
\foreach \x in {0,1,3,5,6,7} 
\draw [dashed, ->] (3*\x,-3.25)--(3*\x,-4.25);
% mid..right horizontal sequence of arrows from bar n to \bar u
\foreach \y in {0,1,2,4,6,7} 
\draw [dashed, ->] (12.9,-1.5*\y)--(14.1,-1.5*\y);

% vertical n arrow from n to \bar n
\foreach \x in {0,1,2,4,5,6,7} 
\draw [->] (3*\x,-4.75)--(3*\x,-5.75);
% horizontal n arrow from n to \bar n
\foreach \y in {0,1,2,3,5,6,7} 
\draw [->] (9.9,-1.5*\y)--(11.1,-1.5*\y);

% mid..lower vertical sequence of arrows from \bar n to \bar u
\foreach \x in {0,2,3,4,6,7}
\draw [dashed, ->] (3*\x,-7.75)--(3*\x,-8.75);
% mid..left horizontal sequence of arrows from u to n
\foreach \y in {0,1,2,4,6,7} 
\draw [dashed, ->] (6.9,-1.5*\y)--(8.1,-1.5*\y);

% mid..lower vertical sequence of arrows from \bar u to \bar 1
\foreach \x in {0,1,3,5,6,7} 
\draw [dashed, ->] (3*\x,-6.25)--(3*\x,-7.25);
% mid..left horizontal sequence of arrows from 1 to u
\foreach \y in {0,1,3,4,5,7} 
\draw [dashed, ->] (3.9,-1.5*\y)--(5.1,-1.5*\y);

%lower vertical 0 arrows from \bar 1 to 0
\foreach \x in {1,...,5} 
\draw [thick, ->] (3*\x,-9.25)--(3*\x,-10.25);
% left horizontal arrow from 0 to 1
\foreach \y in {2,...,6} 
\draw [thick, ->] (0.9,-1.5*\y)--(2.1,-1.5*\y);

%% connected component of

% 0\ot 0 lower right 
\foreach \x in {21}
\foreach \y in {10.5}
\draw [red] (-0.7+\x,0.25-\y)--(0.7+\x,0.25-\y)--(0.7+\x,-0.25-\y)--(-0.7+\x,-0.25-\y)--(-0.7+\x,0.25-\y);

% 0\ot 1 right 
\foreach  \x in {21}
\draw [blue](-0.7+\x,0.25-1.5)--(0.7+\x,0.25-1.5)--(0.7+\x,-0.25-9)--(-0.7+\x,-0.25-9)--(-0.7+\x,0.25-1.5);

% 1\ot 0 down
\foreach \y in {10.5}
\draw [blue](-0.7+3,0.25-\y)--(0.7+18,0.25-\y)--(0.7+18,-0.25-\y)--(-0.7+3,-0.25-\y)--(-0.7+3,0.25-\y);

% 1\ot 2 
\draw [red] (-0.7+3,1-3)--(-0.7+3,-0.25-9)--(0.25+16.5,-0.25-9)--(-0.7+3,1-3);

%1\ot 1
\draw [foge] (-0.7+3,0.25-1.5)--(0.7+18,0.25-1.5)--(0.7+18,-0.25-9)--(-0.7+18,-0.25-9)--(-0.7+3,-0.25-1.5)--(-0.7+3,0.25-1.5);

\end{tikzpicture}
\end{center}
\caption{Crystal graph of $\B\ot \B$ for the Lie algebra $A_{2n}^{(2)}(n\geq 2)$}
\label{fig2}
\end{figure}

We obtain the following energy matrix:
\[
H=
\bordermatrix{
\text{}&1& 2 &\cdots &n & \overline{n} & \cdots & \overline{2} &\overline{1}&0
\cr 1&2&\cdots&\cdots&\cdots&\cdots& \cdots& \cdots&2 &1
\cr 2&0&\ddots&&&&&&\vdots &\vdots
\cr \vdots&\vdots &\ddots&\ddots&&&2^*&&\vdots&\vdots
\cr n&\vdots&&\ddots&\ddots & &&&\vdots &\vdots
\cr \overline{n}&\vdots&&&\ddots&\ddots&& &\vdots&\vdots
\cr \vdots&\vdots&&0^*&&\ddots&\ddots&&\vdots&\vdots
\cr \overline{2}&\vdots&&&& &\ddots &\ddots &\vdots &\vdots
\cr \overline{1}&0&\cdots&\cdots&\cdots&\cdots&\cdots& 0&2 &1
\cr 0&1&\cdots&\cdots&\cdots&\cdots& \cdots& \cdots&1&0
} .
\]

In this case, $L(\Lambda_0)$ is the only irreducible highest weight module of level $1$, and the corresponding ground state path is $\p_{\Ll_0}=\cdots000$, which is constant. So in theory, to obtain the character formula of \Thm{theo:chara2n2}, we could apply Theorem \ref{theo:formchar}. But here we will make use of the additional variable $d$ of Theorem \ref{theo:formchar2} to simplify our computations.

Let us apply Theorem \ref{theo:formchar2} with $D=t=1$ and $d=2$. We have $H_{\Lambda_0}=H$, and
\begin{equation}
\label{eq:th_a2}
\sum_{\pi\in \,_1^2\Pp_{c_0}^{\gg}} C(\pi)q^{|\pi|} = \frac{e^{-\Ll_0}\mathrm{ch}(L(\Lambda_0))}{(q^2;q^2)_{\infty}},
\end{equation}
where $q=e^{-\delta/2}$ and $c_b=e^{\wt b}$ for all $b\in \B$.

We recall that $_1^2\Pp_{c_0}^{\gg}$ is the set of grounded partitions $\pi=(\pi_0,\ldots,\pi_{s-1},0_{c_0})$ with relation $\gg$ and ground $c_0$, such that for all $k \in \{0, \dots , s-1\}$,
\begin{equation}
\label{eq:a2parity}
\pi_k-\pi_{k+1} -H(p_{k+1}\ot p_{k}) \in 2 \Z_{\geq0},
\end{equation}
where $c(\pi_k)=c_{p_k}$ and $\pi_s=0_{c_{g_0}}$.

These grounded partitions are exactly the partitions grounded in $c_0$ which are finite sub-sequences of 
\[\cdots\od 5_{c_1}\od 4_{c_0}\od 3_{c_{\overline{1}}}\od\cdots\od 3_{c_1}\od 2_{c_0}\od 1_{c_{\overline{1}}}\od \cdots\od 1_{c_{\overline{n}}}\od 1_{c_n}\od \cdots\od 1_{c_1}\od 0_{c_0}\,,\]
where the parts $(2k)_{c_0}$ may repeat for $k>0$.

Indeed, by \eqref{eq:a2parity}, the size difference between two consecutive parts with colours $c_b$ and $c_{b'}$
has the same parity as $H(b'\ot b)$. This implies that all the parts with colours $c_1,c_{\overline{1}},\ldots,c_n,c_{\overline{n}}$ have the same parity, different from the parity
of the parts with colour $c_0$. Since the ground has size $0$ and colour $c_0$, the parts coloured $c_0$ are even and the others are odd, and we obtain the sequence above. 
\m Setting $c_0=1$, we obtain the generating function 
\[
\sum_{\pi\in \,_1^2\Pp_{c_0}^{\gg}} C(\pi)q^{|\pi|} = \frac{(-c_1q,-c_{\overline{1}}q,\ldots,-c_nq,-c_{\overline{n}}q;q^2)_{\infty}}{(q^2;q^2)_{\infty}} .
\]
Combining this with \eqref{eq:th_a2}, we obtain
$$e^{-\Ll_0}\mathrm{ch}(L(\Lambda_0))= (-c_1q,-c_{\overline{1}}q,\ldots,-c_nq,-c_{\overline{n}}q;q^2)_{\infty},$$
which is Theorem \ref{theo:chara2n2}.

\subsection{The Lie algebra $D_{n+1}^{(2)} (n\geq 2)$}
\label{sec:D2}
We now turn to the algebra $D_{n+1}^{(2)}$ for $n\geq 2$ and prove Theorem \ref{theo:chardn2}.
The crystal $\B$ of the vector representation of $D_{n+1}^{(2)} (n\geq 2)$ is given by the crystal graph in Figure \ref{fig3} with the weights
\begin{align*}
\wt (0)&= \wt(\overline{0})= 0,\\
\wt (u) &= -\wt (\overline{u}) = \sum_{i=u}^{n} \alpha_i \text{ for all $u\in \{1,\ldots,n\}$.}
\end{align*}
Here, the null root is $\delta = \sum_{i=0}^n \alpha_i$.

\begin{figure}[H]
\begin{center}
\begin{tikzpicture}[scale=0.8, every node/.style={scale=0.8}]
%%crystal graph B
\draw (-4,0) node{$\B$ :};
\draw (-4,-0.75) node{$\p_{\Ll_0}=(\cdots0\,0\,0\,0) $};
\draw (-4,-1.5) node{$\p_{\Ll_n}=(\cdots\overline{0}\,\overline{0}\,\overline{0}\,\overline{0})$};
\draw (-1.5,-0.75) node {$0$};
\draw (8.25,-0.75) node {$\overline{0}$};
\draw (0,0) node {$1$};
\draw (1.5,0) node {$2$};
\draw (5,0) node {$n-1$};
\draw (6.75,0) node {$n$};
\draw (3.15,0) node {$\cdots$};
\draw (0,-1.5) node {$\overline{1}$};
\draw (1.5,-1.5) node {$\overline{2}$};
\draw (5,-1.5) node {$\overline{n-1}$};
\draw (6.75,-1.5) node {$\overline{n}$};
\draw (3.15,-1.5) node {$\cdots$};
\draw (-0.2-1.5,0.2-0.75)--(0.2-1.5,0.2-0.75)--(0.2-1.5,-0.2-0.75)--(-0.2-1.5,-0.2-0.75)--(-0.2-1.5,0.2-0.75);
\draw (-0.2+8.25,0.2-0.75)--(0.2+8.25,0.2-0.75)--(0.2+8.25,-0.2-0.75)--(-0.2+8.25,-0.2-0.75)--(-0.2+8.25,0.2-0.75);
\foreach \x in {0,1,4.5} 
\foreach \y in {0,1}
\draw (-0.2+1.5*\x,0.2-1.5*\y)--(0.2+1.5*\x,0.2-1.5*\y)--(0.2+1.5*\x,-0.2-1.5*\y)--(-0.2+1.5*\x,-0.2-1.5*\y)--(-0.2+1.5*\x,0.2-1.5*\y);
\foreach \x in {0} 
\foreach \y  in {0,1}
\draw (-0.45+5+2*\x,0.2-1.5*\y)--(0.45+5+2*\x,0.2-1.5*\y)--(0.45+5+2*\x,-0.2-1.5*\y)--(-0.45+5+2*\x,-0.2-1.5*\y)--(-0.45+5+2*\x,0.2-1.5*\y);

\draw [thick, ->] (-1.45,-0.5)--(-0.3,-0.05);
\draw [thick, <-] (-1.45,-1)--(-0.3,-1.45);
\draw (-0.8,-0.1) node {\footnotesize{$0$}};
\draw (-0.8,-1.4) node {\footnotesize{$0$}};
\draw [thick, ->] (0.3,0)--(1.2,0);
\draw [thick, <-] (0.3,-1.5)--(1.2,-1.5);
\draw (0.75,0.15) node {\footnotesize{$1$}};
\draw (0.75,-1.67) node {\footnotesize{$1$}};
\draw [thick, ->] (1.8,0)--(2.7,0);
\draw [thick, <-] (1.8,-1.5)--(2.7,-1.5);
\draw (2.25,0.15) node {\footnotesize{$2$}};
\draw (2.25,-1.67) node {\footnotesize{$2$}};
\draw [thick, ->] (3.6,0)--(4.5,0);
\draw [thick, <-] (3.6,-1.5)--(4.5,-1.5);
\draw (4.05,0.15) node {\footnotesize{$n-2$}};
\draw (4.05,-1.67) node {\footnotesize{$n-2$}};
\draw [thick, ->] (5.6,0)--(6.4,0);
\draw [thick, <-] (5.6,-1.5)--(6.4,-1.5);
\draw (6,0.15) node {\footnotesize{$n-1$}};
\draw (6,-1.67) node {\footnotesize{$n-1$}};
\draw [thick, ->] (7.05,-0.05)--(8.2,-0.5);
\draw [thick, <-] (7.05,-1.45)--(8.2,-1);
\draw (7.6,-0.1) node {\footnotesize{$n$}};
\draw (7.6,-1.4) node {\footnotesize{$n$}};
\end{tikzpicture}
\end{center}
\caption{Crystal graph $\B$  of the vector representation for the Lie algebra $D_{n+1}^{(2)}(n\geq 2)$}
\label{fig3}
\end{figure}
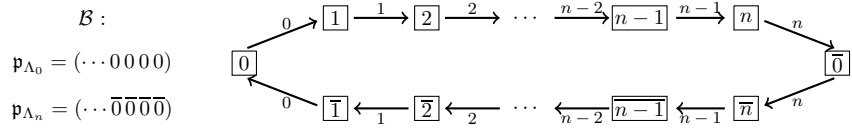

Again, we compute the energy function $H$ on $\B\ot \B$ such that $H(0\ot 0)=0$ with the help of the crystal graph of $\B\ot \B$ given in Figure \ref{fig4}.
\begin{figure}[H]
\begin{center}
\begin{tikzpicture}[scale=0.5, every node/.style={scale=0.5}]
\draw [thick,->] (-10,0)--(-9,0);
\draw (-8.8,0) node[right] {: $0$-arrow};
\draw [->] (-10,-1)--(-9,-1);
\draw (-8.8,-1) node[right] {: $n$-arrow};
\draw [->] (-10,-2)--(-9,-2);
\filldraw (-9.5,-2) circle(2pt);
\draw (-8.8,-2) node[right] {: chains of two $n$-arrows};
\filldraw (-9.5,-3) circle(2pt);
\draw (-8.8,-3) node[right] {: vertex of the form $\overline{0}\ot \cdot$ or $\cdot\ot\overline{0}$};
\draw [dashed,->] (-10,-4)--(-9,-4);
\draw (-8.8,-4) node[right] {: paths of $i$-arrows, for consecutive $i\neq 0,n$};
\draw (-10,-4.6)--(-9,-4.6)--(-9,-5.4)--(-10,-5.4)--(-10,-4.6);
\draw (-8.8,-5) node[right] {: connected components without $0$-arrows};
%%crystal graph B
%%crystal graph B
\foreach \x in {0,...,7} 
\foreach \y in {0,...,7}
\draw (-0.2-0.4+3*\x,0.2-1.5*\y)--(0.2-0.4+3*\x,0.2-1.5*\y)--(0.2-0.4+3*\x,-0.2-1.5*\y)--(-0.2-0.4+3*\x,-0.2-1.5*\y)--(-0.2-0.4+3*\x,0.2-1.5*\y);
\foreach \x in {0,...,7} 
\foreach \y in {0,...,7}
\draw (-0.2+0.4+3*\x,0.2-1.5*\y)--(0.2+0.4+3*\x,0.2-1.5*\y)--(0.2+0.4+3*\x,-0.2-1.5*\y)--(-0.2+0.4+3*\x,-0.2-1.5*\y)--(-0.2+0.4+3*\x,0.2-1.5*\y);

\foreach \x in {0,...,7} 
\foreach \y in {0,...,7}
\draw (3*\x,-1.5*\y) node {$\ot$};

\foreach \x in {0,...,7} 
\draw (0.4+3*\x,0) node {$0$};
\foreach \y in {0,...,7} 
\draw (-0.4,-1.5*\y) node {$0$};

\foreach \x in {0,...,7} 
\draw (0.4+3*\x,-1.5) node {$1$};
\foreach \y in {0,...,7} 
\draw (-0.4+3,-1.5*\y) node {$1$};

\foreach \x in {0,...,7} 
\draw (0.4+3*\x,-3) node {$u$};
\foreach \y in {0,...,7} 
\draw (-0.4+6,-1.5*\y) node {$u$};

\foreach \x in {0,...,7} 
\draw (0.4+3*\x,-4.5) node {$n$};
\foreach \y in {0,...,7} 
\draw (-0.4+9,-1.5*\y) node {$n$};

\foreach \x in {0,...,7} 
\draw (0.4+3*\x,-6) node {$\overline{n}$};
\foreach \y in {0,...,7} 
\draw (-0.4+12,-1.5*\y) node {$\overline{n}$};

\foreach \x in {0,...,7} 
\draw (0.4+3*\x,-7.5) node {$\overline{u}$};
\foreach \y in {0,...,7} 
\draw (-0.4+15,-1.5*\y) node {$\overline{u}$};

\foreach \x in {0,...,7} 
\draw (0.4+3*\x,-9) node {$\overline{1}$};
\foreach \y in {0,...,7} 
\draw (-0.4+18,-1.5*\y) node {$\overline{1}$};

\foreach \x in {0,...,7} 
\draw (0.4+3*\x,-10.5) node {$0$};
\foreach \y in {0,...,7} 
\draw (-0.4+21,-1.5*\y) node {$0$};

\foreach \x in {0,...,5,7} 
\draw [thick, ->] (3*\x,-0.25)--(3*\x,-1.25);
\draw [thick, ->] (3*3.5,-0.25)--(3*3.5,-1.25);
\foreach \y in {0,2,3,4,5,6,7} 
\draw [thick, ->] (18.9,-1.5*\y)--(20.1,-1.5*\y);
\draw [thick, ->] (18.9,-1.5*3.5)--(20.1,-1.5*3.5);

\foreach \x in {0,2,3,4,6,7} 
\draw [dashed, ->] (3*\x,-1.75)--(3*\x,-2.75);
\draw [dashed, ->] (3*3.5,-1.75)--(3*3.5,-2.75);
\foreach \y in {0,1,3,4,5,7} 
\draw [dashed, ->] (15.9,-1.5*\y)--(17.1,-1.5*\y);
\draw [dashed, ->] (15.9,-1.5*3.5)--(17.1,-1.5*3.5);

\foreach \x in {0,1,3,5,6,7} 
\draw [dashed, ->] (3*\x,-3.25)--(3*\x,-4.25);
\draw [dashed, ->] (3*3.5,-3.25)--(3*3.5,-4.25);
\foreach \y in {0,1,2,4,6,7} 
\draw [dashed, ->] (12.9,-1.5*\y)--(14.1,-1.5*\y);
\draw [dashed, ->] (12.9,-1.5*3.5)--(14.1,-1.5*3.5);

\foreach \x in {0,1,2,4,5,6,7} 
\draw [->] (3*\x,-4.75)--(3*\x,-5.75);
\foreach \y in {0,1,2,3,5,6,7} 
\draw [->] (9.9,-1.5*\y)--(11.1,-1.5*\y);

\foreach \x in {0,2,3,4,6,7}
\draw [dashed, ->] (3*\x,-7.75)--(3*\x,-8.75);
\draw [dashed, ->] (3*3.5,-7.75)--(3*3.5,-8.75);
\foreach \y in {0,1,2,4,6,7} 
\draw [dashed, ->] (6.9,-1.5*\y)--(8.1,-1.5*\y);
\draw [dashed, ->] (6.9,-1.5*3.5)--(8.1,-1.5*3.5);

\foreach \x in {0,1,3,5,6,7} 
\draw [dashed, ->] (3*\x,-6.25)--(3*\x,-7.25);
\draw [dashed, ->] (3*3.5,-6.25)--(3*3.5,-7.25);
\foreach \y in {0,1,3,4,5,7} 
\draw [dashed, ->] (3.9,-1.5*\y)--(5.1,-1.5*\y);
\draw [dashed, ->] (3.9,-1.5*3.5)--(5.1,-1.5*3.5);

\foreach \x in {1,...,5} 
\draw [thick, ->] (3*\x,-9.25)--(3*\x,-10.25);
\draw [thick, ->] (3*3.5,-9.25)--(3*3.5,-10.25);
\foreach \y in {2,...,6} 
\draw [thick, ->] (0.9,-1.5*\y)--(2.1,-1.5*\y);
\draw [thick, ->] (0.9,-1.5*3.5)--(2.1,-1.5*3.5);

\draw [->] (9.2,-5.25)--(10.3,-5.25); 
\draw [->] (10.5,-5.35)--(10.5,-5.9);

\foreach \x in {21}
\foreach \y in {10.5}
\draw [red] (-0.7+\x,0.25-\y)--(0.7+\x,0.25-\y)--(0.7+\x,-0.25-\y)--(-0.7+\x,-0.25-\y)--(-0.7+\x,0.25-\y);

\foreach  \x in {21}
\draw [blue](-0.7+\x,0.25-1.5)--(0.7+\x,0.25-1.5)--(0.7+\x,-0.25-9)--(-0.7+\x,-0.25-9)--(-0.7+\x,0.25-1.5);

\foreach \y in {10.5}
\draw [blue](-0.7+3,0.25-\y)--(0.7+18,0.25-\y)--(0.7+18,-0.25-\y)--(-0.7+3,-0.25-\y)--(-0.7+3,0.25-\y);

\draw [red] (-0.7+3,1-3)--(-0.7+3,-0.25-9)--(0.25+16.5,-0.25-9)--(0.25+10.5,-0.25-6)--(0.25+10.5,0.25-5.25)--(-0.7+9,0.25-5.25)--(-0.7+3,1-3);

\draw [foge] (-0.7+3,0.25-1.5)--(0.7+18,0.25-1.5)--(0.7+18,-0.25-9)--(-0.7+18,-0.25-9)--(-0.7+12,-0.25-6)--(-0.7+12,-0.25-4.5)--(-0.7+9,-0.25-4.5)--(-0.7+3,-0.25-1.5)--(-0.7+3,0.25-1.5);

\foreach \x in {0,...,7} 
\filldraw (3*\x,-5.25) circle(2pt);
\foreach \y in {0,1,2,3,3.5,4,5,6,7} 
\filldraw (10.5,-1.5*\y) circle(2pt);

\end{tikzpicture}
\end{center}
\caption{Crystal graph of $\B\ot \B$ for the Lie algebra $D_{n+1}^{(2)} (n\geq 2)$}
\label{fig4}
\end{figure}
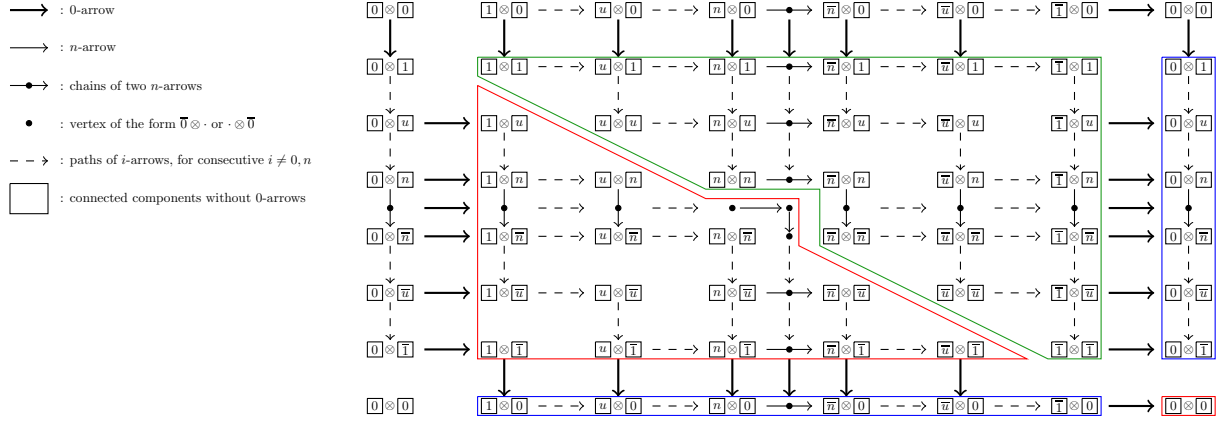

We obtain the following energy matrix:
\begin{equation}\label{eq:energyD}
H= \bordermatrix{
\text{}&1&2&\cdots&n&\overline{0}&\overline{n}&\cdots&\overline{2}&\overline{1}&0
\cr 1 &2&\cdots &\cdots&2&2&2&\cdots & \cdots &2&1
\cr 2& 0 &\ddots&2^*&\vdots&\vdots&\vdots&&&\vdots&\vdots
\cr \vdots&\vdots&\ddots&\ddots &\vdots&\vdots&\vdots&2^*&&\vdots&\vdots
\cr n&\vdots&&\ddots&2&2&2&\cdots&\cdots&2&1
\cr \overline{0}&\vdots&&&\ddots&0&2&\cdots&\cdots&2&1
\cr \overline{n}&\vdots&&&&\ddots&2&\cdots&\cdots&2&\vdots
\cr \vdots&\vdots&&0^*&&&\ddots&\ddots&2^*&\vdots&\vdots
\cr \overline{2}&\vdots&&&&& &\ddots &\ddots &\vdots &\vdots
\cr \overline{1}&0&\cdots&\cdots&\cdots&\cdots&\cdots&\cdots& 0&2 &1
\cr 0&1&\cdots&\cdots&\cdots&\cdots& \cdots & \cdots& \cdots&1&0
} .
\end{equation}

In this case, there are two irreducible highest weight modules of level $1$: $L(\Lambda_0)$ and $L(\Lambda_n)$, and the corresponding ground state paths are $\p_{\Ll_0}=\cdots000$ and $\p_{\Ll_{n}}=\cdots\overline{0}\overline{0}\overline{0}$, respectively. Again, though both ground state paths are constant, we make use of the additional variable $d$ of Theorem \ref{theo:formchar2} to simplify our computations.

\subsubsection{Character for $L(\Lambda_0)$}
We start by studying $L(\Lambda_0)$.
Again, we apply Theorem \ref{theo:formchar2} with $D=t=1$ and $d=2$. We have $H_{\Lambda_0}=H$, and
\begin{equation}
\label{eq:th_d2}
\sum_{\pi\in \,_1^2\Pp_{c_0}^{\gg}} C(\pi)q^{|\pi|} = \frac{e^{-\Ll_0}\mathrm{ch}(L(\Lambda_0))}{(q^2;q^2)_{\infty}},
\end{equation}
where $q=e^{-\delta}$ and $c_b=e^{\wt b}$ for all $b\in \B$.

Note that the energy matrix \eqref{eq:energyD} is exactly the same as in the case of 
$A_{2n}^{(2)}$ except that the row and column $\overline{0}$ were added. 
So, if we remove the parts coloured $c_{\overline{0}}$, the grounded partitions of $_1^2\Pp_{c_0}^{\gg}$ are the same as in the previous section. Moreover, the added parts coloured $c_{\overline{0}}$ can repeat arbitrarily many times, and are located between $c_n$ and $c_{\overline{n}}$ in the order of colours. Thus, $_1^2\Pp_{c_0}^{\gg}$ is the set of grounded partitions with ground $c_0$, which are finite subsequences of 
\[\cdots\od 3_{c_{\overline{1}}}\od \cdots\od 3_{c_{\overline{n}}}\od 3_{c_{\overline{0}}}\od 3_{c_n}\od \cdots\od 3_{c_1}\od 2_{c_0}\od 1_{c_{\overline{1}}}\od \cdots\od 1_{c_{\overline{n}}}\od 1_{c_{\overline{0}}}\od 1_{c_n}\od \cdots\od 1_{c_1}\od 0_{c_0}\cdot\]
where only the parts $2k_{c_0}$ and $(2k-1)_{c_{\overline{0}}}$ may repeat for all $k>0$.

\m Thus we obtain  the generating function 
\[
\sum_{\pi\in \,_1^2\Pp_{c_0}^{\gg}} C(\pi)q^{|\pi|} = \frac{(-c_1q,-c_{\overline{1}}q,\ldots,-c_nq,-c_{\overline{n}}q;q^2)_{\infty}}{(c_0q^2;q^2)_{\infty}(c_{\overline{0}}q;q^2)_{\infty}},
\]
and setting $c_0 = c_{\overline{0}}=1$, we get
\[
\sum_{\pi\in \,_1^2\Pp_{c_0}^{\gg}} C(\pi)q^{|\pi|} = \frac{(-c_1q,-c_{\overline{1}}q,\ldots,-c_nq,-c_{\overline{n}}q;q^2)_{\infty}}{(q;q)_{\infty}},
\]
Combining this with \eqref{eq:th_d2}, we obtain
$$e^{-\Ll_0}\mathrm{ch}(L(\Lambda_0))= \frac{(-c_1q,-c_{\overline{1}}q,\ldots,-c_nq,-c_{\overline{n}}q;q^2)_{\infty}}{(q;q^2)_{\infty}},$$
which is \eqref{eq:d21}.

\subsubsection{Character for $L(\Lambda_n)$}
We now turn to $L(\Lambda_n)$.
Again, we apply Theorem \ref{theo:formchar2} with $D=t=1$ and $d=2$. We have $H_{\Lambda_n}=H$, and
\begin{equation}
\label{eq:th_d22}
\sum_{\pi\in \,_1^2\Pp_{c_{\overline{0}}}^{\gg}} C(\pi)q^{|\pi|} = \frac{e^{-\Ll_n}\mathrm{ch}(L(\Lambda_n))}{(q^2;q^2)_{\infty}},
\end{equation}
where $q=e^{-\delta}$ and $c_b=e^{\wt b}$ for all $b\in \B$.

The energy matrix, and therefore difference conditions, are still the same. But now the grounded partitions are grounded in $c_{\overline{0}}$ instead of $c_0$. The grounded partitions have smallest part $0_{c_{\overline{0}}}$, so the parts with colours $c_{\overline{0}},c_1,c_{\overline{1}},\ldots,c_n,c_{\overline{n}}$ are now even, while the parts with colour $c_0$ are odd. 
Thus the grounded partitions in $\,_1^2\Pp_{c_{\overline{0}}}^{\gg}$ are exactly the partitions grounded in $c_{\overline{0}}$, which are finite subsequences of 
\[\cdots\od 4_{c_1}\od 3_{c_0}\od 2_{c_{\overline{1}}}\od \cdots\od 2_{c_{\overline{n}}}\od 2_{c_{\overline{0}}}\od 2_{c_n}\od \cdots\od 2_{c_1}\od 1_{c_0}\od 0_{c_{\overline{1}}}\od \cdots\od 0_{c_{\overline{n}}}\od 0_{c_{\overline{0}}}\cdot\]
where for all $k>0$, the parts $(2k-1)_{c_0}$ and $2k_{c_{\overline{0}}}$ may repeat.

\m Thus we obtain that the generating function 
\[
\sum_{\pi\in \,_1^2\Pp_{c_{\overline{0}}}^{\gg}} C(\pi)q^{|\pi|} = \frac{(-c_1q^2,-c_{\overline{1}},\ldots,-c_nq^2,-c_{\overline{n}};q^2)_{\infty}}{(c_0q;q^2)_{\infty}(c_{\overline{0}}q^2;q^2)_{\infty}}\,,
\]
and setting $c_0 = c_{\overline{0}}=1$, we get
\[
\sum_{\pi\in \,_1^2\Pp_{c_{\overline{0}}}^{\gg}} C(\pi)q^{|\pi|} = \frac{(-c_1q^2,-c_{\overline{1}},\ldots,-c_nq^2,-c_{\overline{n}};q^2)_{\infty}}{(q;q)_{\infty}}\,,
\]
Combining this with \eqref{eq:th_d22}, we obtain
$$e^{-\Ll_n}\mathrm{ch}(L(\Lambda_n))=  \frac{(-c_1q^2,-c_{\overline{1}},\ldots,-c_nq^2,-c_{\overline{n}};q^2)_{\infty}}{(q;q^2)_{\infty}},$$
which is \eqref{eq:d22}.

\subsection{The Lie algebra $A_{2n-1}^{(2)}(n\geq 3)$}
\label{Sec:A2n-1}
We now move to our first example where the ground state paths are not constant: the Lie algebra $A_{2n-1}^{(2)}$ for $n\geq 3$, and prove Theorem \ref{theo:chara2n1}.

The crystal $\B$ of the vector representation of $A_{2n-1}^{(2)}(n\geq 3)$ is given by the crystal graph in Figure \ref{fig5} with, for all $u\in \{1,\ldots,n\}$, the weights
$$\wt (u) = -\wt (\overline{u}) = \frac{1}{2}\alpha_n+\sum_{i=u}^{n-1} \alpha_i .$$
Here, the null root is
\[\delta = \alpha_0+\alpha_1+\alpha_n+2\sum_{i=2}^{n-1} \alpha_i .\]

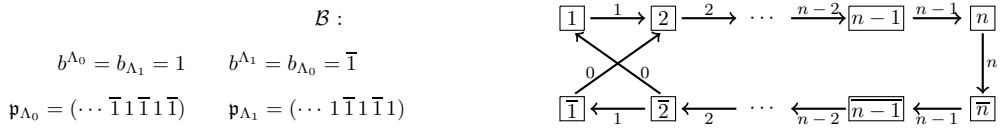
\begin{figure}[H]
\begin{center}
\begin{tikzpicture}[scale=0.8, every node/.style={scale=0.8}]
%%crystal graph B
\draw (-4,0) node{$\B$ :};
\draw (-6,-0.75) node{$b^{\Ll_0}=b_{\Ll_1}=1\qquad b^{\Ll_1}=b_{\Ll_0}=\overline{1}$};
\draw (-6,-1.5) node{$\p_{\Ll_0}=(\cdots\,\overline{1}\,1\,\overline{1}\,1\,\overline{1})\qquad \p_{\Ll_1}=(\cdots\,1\,\overline{1}\,1\,\overline{1}\,1)$};
\draw (0,0) node {$1$};
\draw (1.5,0) node {$2$};
\draw (5,0) node {$n-1$};
\draw (6.75,0) node {$n$};
\draw (3.15,0) node {$\cdots$};
\draw (0,-1.5) node {$\overline{1}$};
\draw (1.5,-1.5) node {$\overline{2}$};
\draw (5,-1.5) node {$\overline{n-1}$};
\draw (6.75,-1.5) node {$\overline{n}$};
\draw (3.15,-1.5) node {$\cdots$};

\foreach \x in {0,1,4.5} 
\foreach \y in {0,1}
\draw (-0.2+1.5*\x,0.2-1.5*\y)--(0.2+1.5*\x,0.2-1.5*\y)--(0.2+1.5*\x,-0.2-1.5*\y)--(-0.2+1.5*\x,-0.2-1.5*\y)--(-0.2+1.5*\x,0.2-1.5*\y);
\foreach \x in {0} 
\foreach \y  in {0,1}
\draw (-0.45+5+2*\x,0.2-1.5*\y)--(0.45+5+2*\x,0.2-1.5*\y)--(0.45+5+2*\x,-0.2-1.5*\y)--(-0.45+5+2*\x,-0.2-1.5*\y)--(-0.45+5+2*\x,0.2-1.5*\y);

\draw [thick, <-] (0.05,-0.25)--(1.45,-1.25);
\draw [thick, <-] (1.45,-0.25)--(0.05,-1.25);
\draw (0.3,-0.9) node {\footnotesize{$0$}};
\draw (1.2,-0.9) node {\footnotesize{$0$}};

\draw [thick, ->] (0.3,0)--(1.2,0);
\draw [thick, <-] (0.3,-1.5)--(1.2,-1.5);
\draw (0.75,0.15) node {\footnotesize{$1$}};
\draw (0.75,-1.67) node {\footnotesize{$1$}};
\draw [thick, ->] (1.8,0)--(2.7,0);
\draw [thick, <-] (1.8,-1.5)--(2.7,-1.5);
\draw (2.25,0.15) node {\footnotesize{$2$}};
\draw (2.25,-1.67) node {\footnotesize{$2$}};
\draw [thick, ->] (3.6,0)--(4.5,0);
\draw [thick, <-] (3.6,-1.5)--(4.5,-1.5);
\draw (4.05,0.15) node {\footnotesize{$n-2$}};
\draw (4.05,-1.67) node {\footnotesize{$n-2$}};
\draw [thick, ->] (5.6,0)--(6.4,0);
\draw [thick, <-] (5.6,-1.5)--(6.4,-1.5);
\draw (6,0.15) node {\footnotesize{$n-1$}};
\draw (6,-1.67) node {\footnotesize{$n-1$}};

\draw [thick, ->] (6.75,-0.25)--(6.75,-1.25);
\draw (6.9,-0.75) node {\footnotesize{$n$}};

\end{tikzpicture}
\end{center}
\caption{Crystal graph $\B$  of the vector representation for the Lie algebra $A_{2n-1}^{(2)}(n\geq 3)$}
\label{fig5}
\end{figure}

We use the crystal graph for $\B\ot \B$ given in Figure \ref{fig6} to compute the energy function $H$ on $\B\ot \B$ such that  $H(1\ot \overline{1})=-1$.

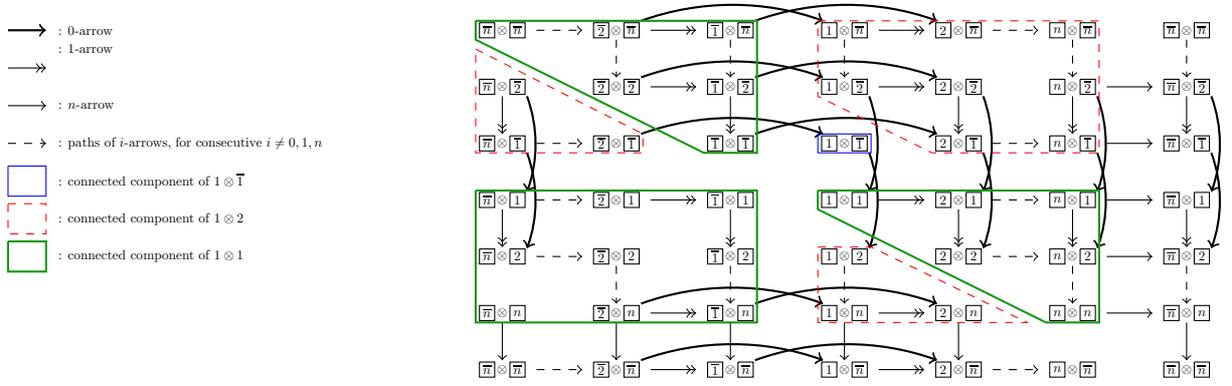
\begin{figure}[H]
\begin{center}
\begin{tikzpicture}[scale=0.5, every node/.style={scale=0.5}]
\draw [thick,->] (-10,-1-0.5)--(-9,-1-0.5);
\draw (-8.8,-1-0.5) node[right] {: $0$-arrow};
\draw [->>] (-10,-2-0.5)--(-9,-2-0.5);
\draw (-8.8,-2) node[right] {: $1$-arrow};
\draw [->] (-10,-3-0.5)--(-9,-3-0.5);
\draw (-8.8,-3-0.5) node[right] {: $n$-arrow};
\draw [dashed,->] (-10,-4-0.5)--(-9,-4-0.5);
\draw (-8.8,-4-0.5) node[right] {: paths of $i$-arrows, for consecutive $i\neq 0,1,n$};
\draw [blue] (-10,-4.6-0.5)--(-9,-4.6-0.5)--(-9,-5.4-0.5)--(-10,-5.4-0.5)--(-10,-4.6-0.5);
\draw (-8.8,-5-0.5) node[right] {: connected component of $1\ot \overline{1}$};
\draw [dashed,red] (-10,-5.6-0.5)--(-9,-5.6-0.5)--(-9,-6.4-0.5)--(-10,-6.4-0.5)--(-10,-5.6-0.5);
\draw (-8.8,-6-0.5) node[right] {: connected component of $1\ot 2$};
\draw [thick,foge] (-10,-6.6-0.5)--(-9,-6.6-0.5)--(-9,-7.4-0.5)--(-10,-7.4-0.5)--(-10,-6.6-0.5);
\draw (-8.8,-7-0.5) node[right] {: connected component of $1\ot 1$};
%%crystal graph B
\foreach \x in {1,...,7} 
\foreach \y in {1,...,7}
\draw (-0.2-0.4+3*\x,0.2-1.5*\y)--(0.2-0.4+3*\x,0.2-1.5*\y)--(0.2-0.4+3*\x,-0.2-1.5*\y)--(-0.2-0.4+3*\x,-0.2-1.5*\y)--(-0.2-0.4+3*\x,0.2-1.5*\y);
\foreach \x in {1,...,7} 
\foreach \y in {1,...,7}
\draw (-0.2+0.4+3*\x,0.2-1.5*\y)--(0.2+0.4+3*\x,0.2-1.5*\y)--(0.2+0.4+3*\x,-0.2-1.5*\y)--(-0.2+0.4+3*\x,-0.2-1.5*\y)--(-0.2+0.4+3*\x,0.2-1.5*\y);

\foreach \x in {1,...,7} 
\foreach \y in {1,...,7}
\draw (3*\x,-1.5*\y) node {$\ot$};

\foreach \x in {1,...,7} 
\draw (0.4+3*\x,-10.5) node {$\overline{n}$};
\foreach \y in {1,...,7} 
\draw (-0.4+21,-1.5*\y) node {$\overline{n}$};

\foreach \x in {1,...,7}
\draw (0.4+3*\x,-1.5) node {$\overline{n}$};
\foreach \y in {1,...,7} 
\draw (-0.4+3,-1.5*\y) node {$\overline{n}$};

\foreach \x in {1,...,7}
\draw (0.4+3*\x,-3) node {$\overline{2}$};
\foreach \y in {1,...,7} 
\draw (-0.4+6,-1.5*\y) node {$\overline{2}$};

\foreach \x in {1,...,7}
\draw (0.4+3*\x,-4.5) node {$\overline{1}$};
\foreach \y in {1,...,7} 
\draw (-0.4+9,-1.5*\y) node {$\overline{1}$};

\foreach \x in {1,...,7}
\draw (0.4+3*\x,-6) node {$1$};
\foreach \y in {1,...,7} 
\draw (-0.4+12,-1.5*\y) node {$1$};

\foreach \x in {1,...,7}
\draw (0.4+3*\x,-7.5) node {$2$};
\foreach \y in {1,...,7} 
\draw (-0.4+15,-1.5*\y) node {$2$};

\foreach \x in {1,...,7}
\draw (0.4+3*\x,-9) node {$n$};
\foreach \y in {1,...,7} 
\draw (-0.4+18,-1.5*\y) node {$n$};

\foreach \x in {1,...,5,7} 
\draw [->] (3*\x,-9.25)--(3*\x,-10.25);

\foreach \x in {2,3,4,6,7} 
\draw [dashed,->] (3*\x,-1.75)--(3*\x,-2.75);
\foreach \x in {2,3,4,6,7} 
\draw [dashed,->] (3*\x,-7.75)--(3*\x,-8.75);

\foreach \x in {2,4}
\foreach \y in {1,2,4,6,7}
\draw [->>] (0.9+3*\x,-1.5*\y)--(2.1+3*\x,-1.5*\y);

\foreach \x in {2,3}
\foreach \y in {1,2,3,6,7}
\draw [thick,->] (0.65+3*\x,0.25-1.5*\y) arc (110:70:7);

\foreach \y in {2,4}
\foreach \x in {1,3,5,6,7}
\draw [->>] (3*\x,-0.25-1.5*\y)--(3*\x,-1.25-1.5*\y);

\foreach \y in {2,3}
\foreach \x in {1,4,5,6,7}
\draw [thick,->] (0.65+3*\x,-0.25-1.5*\y) arc (20:-20:3.7);

\foreach \y in {2,...,6} 
\draw [->] (18.9,-1.5*\y)--(20.1,-1.5*\y);

\foreach \y in {1,3,4,5,7} 
\draw [dashed,->] (3.9,-1.5*\y)--(5.1,-1.5*\y);
\foreach \y in {1,3,4,5,7} 
\draw [dashed,->] (15.9,-1.5*\y)--(17.1,-1.5*\y);

\draw [blue] (-0.7+12,0.25-4.5)--(0.7+12,0.25-4.5)--(0.7+12,-0.25-4.5)--(-0.7+12,-0.25-4.5)--(-0.7+12,0.25-4.5);

\draw [dashed,red] (-0.7+12,0.25-1.5)--(0.7+18,0.25-1.5)--(0.7+18,-0.25-4.5)--(-0.7+15,-0.25-4.5)--(-0.7+12,-0.25-3)--(-0.7+12,0.25-1.5);
\draw [dashed,red] (-0.7+3,1-3)--(0.7+6,0.25-4.5)--(0.7+6,-0.25-4.5)--(-0.7+3,-0.25-4.5)--(-0.7+3,1-3);
\draw [dashed,red] (-0.7+12,0.25-7.5)--(0.7+12,0.25-7.5)--(-1.2+18,-0.25-9)--(-0.7+12,-0.25-9)--(-0.7+12,0.25-7.5);

\draw [thick,foge] (-0.7+3,0.25-6)--(0.7+9,0.25-6)--(0.7+9,-0.25-9)--(-0.7+3,-0.25-9)--(-0.7+3,0.25-6);
\draw [thick,foge] (-0.7+3,0.25-1.5)--(0.7+9,0.25-1.5)--(0.7+9,-0.25-4.5)--(-0.7+9,-0.25-4.5)--(-0.7+3,-0.25-1.5)--(-0.7+3,0.25-1.5);
\draw [thick,foge] (-0.7+12,0.25-6)--(0.7+18,0.25-6)--(0.7+18,-0.25-9)--(-0.7+18,-0.25-9)--(-0.7+12,-0.25-6)--(-0.7+12,0.25-6);
\end{tikzpicture}
\end{center}
\caption{Crystal graph of $\B\ot \B$ for the Lie algebra $A_{2n-1}^{(2)} (n\geq 3)$}
\label{fig6}
\end{figure}

The energy function such that $H(1\ot \overline{1})=-1$ is given by the following matrix:
\begin{equation}
\label{eq:Ha2}
H=
\bordermatrix{
\text{}&1& 2 &\cdots &n & \overline{n} & \cdots & \overline{2} &\overline{1}
\cr 1&1&\cdots&\cdots&\cdots&\cdots& \cdots& \cdots&1 
\cr 2&0&\ddots&&&&&&\vdots 
\cr \vdots&\vdots &\ddots&\ddots&&&1^*&&\vdots
\cr n&\vdots&&\ddots&\ddots & &&&\vdots
\cr \overline{n}&\vdots&&&\ddots&\ddots&& &\vdots
\cr \vdots&\vdots&&0^*&&\ddots&\ddots&&\vdots
\cr \overline{2}&0&0&&& &\ddots &\ddots &\vdots
\cr \overline{1}&-1&0&\cdots&\cdots&\cdots&\cdots& 0&1
} .
\end{equation}

These difference conditions correspond the partial order 
\[\cdots \ll \begin{array}{c} 0_{c_{\overline{1}}} \\ 1_{c_1}\end{array}\ll 1_{c_2}\ll\cdots \ll 1_{c_n}\ll 1_{c_{\overline{n}}}\ll \cdots \ll 1_{c_{\overline{2}}}\ll\begin{array}{c} 1_{c_{\overline{1}}} \\ 2_{c_1}\end{array}\ll 2_{c_2}\ll\cdots .\]

In this case, there are two irreducible highest weight modules of level $1$: $L(\Lambda_0)$ and $L(\Lambda_1)$, with corresponding ground state paths $\p_{\Ll_0}=\cdots 1 \overline{1}1 \overline{1}1 \overline{1}$ and $\p_{\Ll_{1}}=\cdots\overline{1}1\overline{1}1\overline{1}1$.

\subsubsection{Character for $L(\Lambda_0)$}
\label{sec:A0}
We start with $L(\Lambda_0)$.
Here we refer to the notation of \Sct{sec:multiground}. 
Recall that the ground state path of $\Lambda_0$ is $\p_{\Lambda_0}=(g_k)_{k=0}^\infty$ with $g_{2k}=\overline{1}$ and $g_{2k+1}=1$ for all $k\geq 0$. Here, the period of the ground state path is $t=2$, and our choice of particular value $H(1\ot \overline{1})=-1$ for the energy function gives
\begin{equation*}\label{eq:1v1}
H(g_{2k+2}\ot g_{2k+1})=-H(g_{2k+1}\ot g_{2k})=1 .
\end{equation*}
Thus we have $H(1\ot \overline{1})+H(\overline{1} \ot 1)=0$, and by \eqref{eq:Hlamb},  $H_{\Lambda_0}=H$. 
By \eqref{eq:choiceofu}, we obtain that $u^{(0)}=-1$ and $u^{(1)}=1$.

We apply \Thm{theo:formchar2} with $d=2$ and $D=2$, which is allowed because $H(g_{1}\ot g_0)+2H(g_2\ot g_1)=-1$. We obtain
\begin{equation}
\label{eq:tha210}
\sum_{\pi\in \, _2^2\Pp_{c_{\overline{1}}c_1}^{\gg}} C(\pi)q^{|\pi|} = \frac{e^{-\Ll_0}\mathrm{ch}(L(\Lambda_0))}{(q^2;q^2)_{\infty}},
\end{equation}
where $q=e^{-\delta/2}$ and $c_b=e^{\wt b}$ for all $b\in \B$.

\m Recall that $_2^2\Pp_{c_{\overline{1}}c_1}^{\gg}$ is the set of multi-grounded partitions $\pi=(\pi_0,\ldots,\pi_{2s-1},-1_{c_{\overline{1}}},1_{c_1})$ with relation $\gg$ and grounds $c_{\overline{1}},c_1$, \textbf{having an even number of parts}, such that for all $k \in \{0, \dots , 2s-1\}$,
\begin{equation}
\label{eq:a21parity}
\pi_k-\pi_{k+1} - 2H(p_{k+1}\ot p_{k}) \in 2 \Z_{\geq0},
\end{equation}
where $c(\pi_k)=c_{p_k}$ and $\pi_{2s}=-1_{c_{\overline{1}}}$.

We observe that, by \eqref{eq:a21parity} and the fact that $u^{(0)}=-1$, the multi-grounded partitions of $_2^2\Pp_{c_{\overline{1}}c_1}^{\gg}$ have parts with odd sizes, as the differences between consecutive parts are even and the grounds' sizes are odd (indeed, we always have the fixed tail $((-1)_{c_{\overline{1}}},1_{c_1})$).
Besides, computing the generating function for partitions in $_2^2\Pp_{c_{\overline{1}}c_1}^{\gg}$ is not too difficult. It suffices to notice that, combined with \eqref{eq:a21parity}, $\gg$ is the following partial order the set of coloured odd integers:
\[\begin{array}{c} (-1)_{c_{\overline{1}}} \\ 1_{c_1}\end{array}\ll 1_{c_2}\ll\cdots \ll 1_{c_n}\ll 1_{c_{\overline{n}}}\ll \cdots \ll 1_{c_{\overline{2}}}\ll\begin{array}{c} 1_{c_{\overline{1}}} \\ 3_{c_1}\end{array}\ll 3_{c_2}\ll\cdots .\]
 We also note that, since $H(b\ot b)=1$ for all $b\in \B$, only parts coloured $c_1$ and $c_{\overline{1}}$ can appear several times, in sequences of the form
 \begin{equation*}\label{eq:twistedseq}
 \cdots \ll (2k-1)_{c_{\overline{1}}}\ll (2k+1)_{c_1}\ll(2k-1)_{c_{\overline{1}}}\ll \cdots \ll(2k-1)_{c_{\overline{1}}}\ll (2k+1)_{c_1}\ll\cdots \cdot 
 \end{equation*}
The generating function of these sequences for a fixed integer $k \geq 1$ is given by
$$ \frac{(1+c_{\overline{1}}q^{2k-1})(1+c_1q^{2k+1})}{(1-c_{\overline{1}}c_1q^{4k})}\, ,$$
where the denominator generates pairs $((2k-1)_{c_{\overline{1}}},(2k+1)_{c_1})$ that can repeat arbitrarily many times, and the numerator accounts for the possibility of having an isolated $(2k+1)_{c_1}$ on the left end of the sequence, or an isolated $(2k-1)_{c_{\overline{1}}}$ on the right end of the sequence.

% To compute the generating function of such sequences  for a fixed $k$, we distinguish $4$ cases:
% \begin{enumerate}
%  \item When the sequences begin and end with $(2k-1)_{c_{\overline{1}}}$, we then have an odd number of parts, and by gathering the pairs $(2k+1)_{c_1}\ll(2k-1)_{c_{\overline{1}}}$ after the first element (eventually no pairs),
%  we obtain the series
%  \[\frac{c_{\overline{1}}q^{2k-1}}{(1-c_{\overline{1}}c_1q^{4k})} .\]
%  \item In the same way, when the sequences begin and end with $(2k+1)_{c_1}$, we have 
%  \[\frac{c_{1}q^{2k+1}}{(1-c_{\overline{1}}c_1q^{4k})} .\]
%  \item When the sequences have an even non zero number of parts, by taking pairwise and considering whether the sequences begin by either $(2k-1)_{c_{\overline{1}}}$ or $(2k+1)_{c_{1}}$, we obtain
%  \[\frac{2c_{\overline{1}}c_1q^{4k}}{(1-c_{\overline{1}}c_1q^{4k})} .\]
%  \item Finally, when we have neither $(2k-1)_{c_{\overline{1}}}$ nor $(2k+1)_{c_{1}}$, the generating function is $1$.
% \end{enumerate}
%Gathering these $4$ cases, the generating function of such sequences (possibly empty or having one element) for a fixed positive integer $k$ is 
%\begin{equation}\label{eq:twisted}
% \frac{(1+c_{\overline{1}}q^{2k-1})(1+c_1q^{2k+1})}{(1-c_{\overline{1}}c_1q^{4k})} .
%\end{equation}

Note that for $k=0$, only the sequence $(1_{c_1},(-1)_{c_{\overline{1}}},1_{c_1})$ can occur at the tail of the partitions grounded in $c_{\overline{1}},c_1$, but not the sequence $((-1)_{c_{\overline{1}}},1_{c_1},(-1)_{c_{\overline{1}}},1_{c_1})$, as this would violate the definition of multi-grounded partitions.
So, if we temporarily forget the condition on the even number of parts in $_2^2\Pp_{c_{\overline{1}}c_1}^{\gg}$, the generation function would be 
\[(1+c_1q)\cdot\frac{(-c_1q^3,-c_{\overline{1}}q,-c_2q,-c_{\overline{2}}q,\ldots,-c_nq,-c_{\overline{n}}q;q^2)_{\infty}}{(c_{\overline{1}}c_1q^4;q^4)_{\infty}} =\frac{(-c_1q,-c_{\overline{1}}q,\ldots,-c_nq,-c_{\overline{n}}q;q^2)_{\infty}}{(c_{\overline{1}}c_1q^4;q^4)_{\infty}} .\]
Now to take into account the fact that there are an even number of parts, we use \eqref{eq:evennumberparts}.
Thus the multi-grounded partitions in $_2^2\Pp_{c_{\overline{1}}c_1}^{\gg}$ have the following generating function:
\begin{equation*}
 \sum_{\pi \in \,_2^2\Pp_{c_{\overline{1}}c_1}^{\gg}} C(\pi)q^{|\pi|}= \frac{(-c_1q,-c_{\overline{1}}q,\ldots,-c_nq,-c_{\overline{n}}q;q^2)_{\infty}+(c_1q,c_{\overline{1}}q,\ldots,c_nq,c_{\overline{n}}q;q^2)_{\infty}}{2(c_{\overline{1}}c_1q^4;q^4)_{\infty}} .
\end{equation*}

The final expression \eqref{eq:a210} follows by using \eqref{eq:tha210}.

\subsubsection{Character for $L(\Lambda_1)$}
We now turn to $L(\Lambda_1)$, with a similar reasoning. 
Recall that the ground state path of $\Lambda_1$ is $(g_k)_{k=0}^\infty$ with $g_{2k+1}=\overline{1}$ and $g_{2k}=1$ for all $k\geq 0$. We still have $H_{\Lambda_1}=H$, and by setting $D=2$, we have
by \eqref{eq:choiceofu} that $u^{0}=1$ and $u^{(1)}=-1$.
\Thm{theo:formchar2} gives
\begin{equation*}
\label{eq:tha211}
\sum_{\pi\in \, _2^2\Pp_{c_1c_{\overline{1}}}^{\gg}} C(\pi)q^{|\pi|} = \frac{e^{-\Ll_1}\mathrm{ch}(L(\Lambda_1))}{(q^2;q^2)_{\infty}},
\end{equation*}
where $q=e^{-\delta/2}$ and $c_b=e^{\wt b}$ for all $b\in \B$.

So we need to study the set of multi-grounded partitions with ground $c_1,c_{\overline{1}}$ corresponding to $_2^2\Pp_{c_1c_{\overline{1}}}^{\gg}$.
We have almost the same set of partitions as in $_2^2\Pp_{c_{\overline{1}}c_1}^{\gg}$, except that now the tail is always $(1_{c_1},(-1)_{\overline{1}})$, and we can end with the sequence $((-1)_{\overline{1}},1_{c_1},(-1)_{\overline{1}})$, but 
not with $(1_{c_1},(-1)_{\overline{1}},1_{c_1},(-1)_{\overline{1}})$.

Thus the generating function where we temporarily omit the condition on the parity of the number of parts is given by 
\[(1+c_{\overline{1}}q^{-1})\cdot\frac{(-c_1q^3,-c_{\overline{1}}q,-c_2q,-c_{\overline{2}}q,\ldots,-c_nq,-c_{\overline{n}}q;q^2)_{\infty}}{(c_{\overline{1}}c_1q^4;q^4)_{\infty}} =\frac{(-c_1q^3,-c_{\overline{1}}q^{-1},-c_2q,-c_{\overline{2}}q,\ldots,-c_nq,-c_{\overline{n}}q;q^2)_{\infty}}{(c_{\overline{1}}c_1q^4;q^4)_{\infty}}.\]
So the multi-grounded partitions in $_2^2\Pp_{c_1c_{\overline{1}}}^{\gg}$ (with the condition on the even number of parts) are generated by
\begin{equation*}
\sum_{\pi\in \, _2^2\Pp_{c_1c_{\overline{1}}}^{\gg}} C(\pi)q^{|\pi|}= \frac{(-c_1q^3,-c_{\overline{1}}q^{-1},-c_2q,-c_{\overline{2}}q,\ldots,-c_nq,-c_{\overline{n}}q;q^2)_{\infty}+(c_1q^3,c_{\overline{1}}q^{-1},c_2q,c_{\overline{2}}q,\ldots,c_nq,c_{\overline{n}}q;q^2)_{\infty}}{2(c_{\overline{1}}c_1q^4;q^4)_{\infty}}.
\end{equation*}

\subsection{The Lie algebra $B_{n}^{(1)}(n\geq 3)$}
We now study the Lie algebra $B_{n}^{(1)}$ for $n\geq 3$, which has standard level $1$ modules with constant and with non-constant ground state paths.

The crystal $\B$ of the vector representation of $B_{n}^{(1)}(n\geq 3)$ is given by the crystal graph in Figure \ref{fig7} with $\wt (0)=0$ and  for all $u\in \{1,\ldots,n\}$
$$\wt (u) = -\wt (\overline{u}) = \sum_{i=u}^{n} \alpha_i .$$
Here, the null root is $\delta = \alpha_0+\alpha_1+2\sum_{i=2}^{n} \alpha_i$.

\begin{figure}[H]
\begin{center}
\begin{tikzpicture}[scale=0.8, every node/.style={scale=0.8}]
%%crystal graph B
\draw (-4,0) node{$\B$ :};
\draw (-4,-0.75) node{$\p_{\Ll_n}=(\cdots\,0\,0\,0\,0)$};
\draw (-4,-1.5) node{$\p_{\Ll_1}=(\cdots\,1\,\overline{1}\,1\,\overline{1}\,1)$};
\draw (-4,-2.25) node{$\p_{\Ll_0}=(\cdots\,\overline{1}\,1\,\overline{1}\,1\,\overline{1})$};
\draw (8.25,-0.75) node {$0$};
\draw (0,0) node {$1$};
\draw (1.5,0) node {$2$};
\draw (5,0) node {$n-1$};
\draw (6.75,0) node {$n$};
\draw (3.15,0) node {$\cdots$};
\draw (0,-1.5) node {$\overline{1}$};
\draw (1.5,-1.5) node {$\overline{2}$};
\draw (5,-1.5) node {$\overline{n-1}$};
\draw (6.75,-1.5) node {$\overline{n}$};
\draw (3.15,-1.5) node {$\cdots$};

\draw (-0.2+8.25,0.2-0.75)--(0.2+8.25,0.2-0.75)--(0.2+8.25,-0.2-0.75)--(-0.2+8.25,-0.2-0.75)--(-0.2+8.25,0.2-0.75);
\foreach \x in {0,1,4.5} 
\foreach \y in {0,1}
\draw (-0.2+1.5*\x,0.2-1.5*\y)--(0.2+1.5*\x,0.2-1.5*\y)--(0.2+1.5*\x,-0.2-1.5*\y)--(-0.2+1.5*\x,-0.2-1.5*\y)--(-0.2+1.5*\x,0.2-1.5*\y);
\foreach \x in {0} 
\foreach \y  in {0,1}
\draw (-0.45+5+2*\x,0.2-1.5*\y)--(0.45+5+2*\x,0.2-1.5*\y)--(0.45+5+2*\x,-0.2-1.5*\y)--(-0.45+5+2*\x,-0.2-1.5*\y)--(-0.45+5+2*\x,0.2-1.5*\y);

\draw [thick, <-] (0.05,-0.25)--(1.45,-1.25);
\draw [thick, <-] (1.45,-0.25)--(0.05,-1.25);
\draw (0.3,-0.9) node {\footnotesize{$0$}};
\draw (1.2,-0.9) node {\footnotesize{$0$}};

\draw [thick, ->] (0.3,0)--(1.2,0);
\draw [thick, <-] (0.3,-1.5)--(1.2,-1.5);
\draw (0.75,0.15) node {\footnotesize{$1$}};
\draw (0.75,-1.67) node {\footnotesize{$1$}};
\draw [thick, ->] (1.8,0)--(2.7,0);
\draw [thick, <-] (1.8,-1.5)--(2.7,-1.5);
\draw (2.25,0.15) node {\footnotesize{$2$}};
\draw (2.25,-1.67) node {\footnotesize{$2$}};
\draw [thick, ->] (3.6,0)--(4.5,0);
\draw [thick, <-] (3.6,-1.5)--(4.5,-1.5);
\draw (4.05,0.15) node {\footnotesize{$n-2$}};
\draw (4.05,-1.67) node {\footnotesize{$n-2$}};
\draw [thick, ->] (5.6,0)--(6.4,0);
\draw [thick, <-] (5.6,-1.5)--(6.4,-1.5);
\draw (6,0.15) node {\footnotesize{$n-1$}};
\draw (6,-1.67) node {\footnotesize{$n-1$}};
\draw [thick, ->] (7.05,-0.05)--(8.2,-0.5);
\draw [thick, <-] (7.05,-1.45)--(8.2,-1);
\draw (7.6,-0.1) node {\footnotesize{$n$}};
\draw (7.6,-1.4) node {\footnotesize{$n$}};

\end{tikzpicture}
\end{center}
\caption{Crystal graph $\B$  of the vector representation for the Lie algebra $B_{n}^{(1)}(n\geq 3)$}
\label{fig7}
\end{figure}
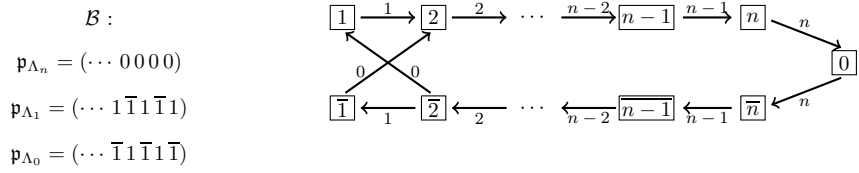

We compute the energy function $H$ on $\B\ot \B$ such that $H(0\ot 0)=0$ with the help of the crystal graph for $\B\ot \B$ given in Figure \ref{fig8}.
\begin{figure}[H]
\begin{center}
\begin{tikzpicture}[scale=0.5, every node/.style={scale=0.5}]
\draw [thick,->] (-10,-1)--(-9,-1);
\draw (-8.8,-1) node[right] {: $0$-arrow};
\draw [->>] (-10,-2)--(-9,-2);
\draw (-8.8,-2) node[right] {: $1$-arrow};
\draw [->] (-10,-3)--(-9,-3);
\draw (-8.8,-3) node[right] {: $n$-arrow};
\draw [dashed,->] (-10,-4)--(-9,-4);
\draw (-8.8,-4) node[right] {: paths of $i$-arrows, for consecutive $i\neq 0,1,n$};
\draw [blue] (-10,-4.6)--(-9,-4.6)--(-9,-5.4)--(-10,-5.4)--(-10,-4.6);
\draw (-8.8,-5) node[right] {: connected component of $1\ot \overline{1}$};
\draw [dashed,red] (-10,-5.6)--(-9,-5.6)--(-9,-6.4)--(-10,-6.4)--(-10,-5.6);
\draw (-8.8,-6) node[right] {: connected component of $1\ot 2$};
\draw [thick,foge] (-10,-6.6)--(-9,-6.6)--(-9,-7.4)--(-10,-7.4)--(-10,-6.6);
\draw (-8.8,-7) node[right] {: connected component of $1\ot 1$};
%%crystal graph B
\foreach \x in {0,...,7} 
\foreach \y in {0,...,7}
\draw (-0.2-0.4+3*\x,0.2-1.5*\y)--(0.2-0.4+3*\x,0.2-1.5*\y)--(0.2-0.4+3*\x,-0.2-1.5*\y)--(-0.2-0.4+3*\x,-0.2-1.5*\y)--(-0.2-0.4+3*\x,0.2-1.5*\y);
\foreach \x in {0,...,7} 
\foreach \y in {0,...,7}
\draw (-0.2+0.4+3*\x,0.2-1.5*\y)--(0.2+0.4+3*\x,0.2-1.5*\y)--(0.2+0.4+3*\x,-0.2-1.5*\y)--(-0.2+0.4+3*\x,-0.2-1.5*\y)--(-0.2+0.4+3*\x,0.2-1.5*\y);

\foreach \x in {0,...,7} 
\foreach \y in {0,...,7}
\draw (3*\x,-1.5*\y) node {$\ot$};

\foreach \x in {0,...,7} 
\draw (0.4+3*\x,0) node {$0$};
\foreach \y in {0,...,7} 
\draw (-0.4,-1.5*\y) node {$0$};
\foreach \x in {0,...,7} 
\draw (0.4+3*\x,-10.5) node {$0$};
\foreach \y in {0,...,7} 
\draw (-0.4+21,-1.5*\y) node {$0$};

\foreach \x in {0,...,7}
\draw (0.4+3*\x,-1.5) node {$\overline{n}$};
\foreach \y in {0,...,7} 
\draw (-0.4+3,-1.5*\y) node {$\overline{n}$};

\foreach \x in {0,...,7}
\draw (0.4+3*\x,-3) node {$\overline{2}$};
\foreach \y in {0,...,7} 
\draw (-0.4+6,-1.5*\y) node {$\overline{2}$};

\foreach \x in {0,...,7}
\draw (0.4+3*\x,-4.5) node {$\overline{1}$};
\foreach \y in {0,...,7} 
\draw (-0.4+9,-1.5*\y) node {$\overline{1}$};

\foreach \x in {0,...,7}
\draw (0.4+3*\x,-6) node {$1$};
\foreach \y in {0,...,7} 
\draw (-0.4+12,-1.5*\y) node {$1$};

\foreach \x in {0,...,7}
\draw (0.4+3*\x,-7.5) node {$2$};
\foreach \y in {0,...,7} 
\draw (-0.4+15,-1.5*\y) node {$2$};

\foreach \x in {0,...,7}
\draw (0.4+3*\x,-9) node {$n$};
\foreach \y in {0,...,7} 
\draw (-0.4+18,-1.5*\y) node {$n$};

\foreach \x in {0,...,5,7} 
\draw [->] (3*\x,-0.25)--(3*\x,-1.25);
\foreach \x in {1,...,5} 
\draw [->] (3*\x,-9.25)--(3*\x,-10.25);

\foreach \x in {0,2,3,4,6,7} 
\draw [dashed,->] (3*\x,-1.75)--(3*\x,-2.75);
\foreach \x in {0,2,3,4,6,7} 
\draw [dashed,->] (3*\x,-7.75)--(3*\x,-8.75);

\foreach \x in {2,4}
\foreach \y in {0,1,2,4,6,7}
\draw [->>] (0.9+3*\x,-1.5*\y)--(2.1+3*\x,-1.5*\y);

\foreach \x in {2,3}
\foreach \y in {0,1,2,3,6,7}
\draw [thick,->] (0.65+3*\x,0.25-1.5*\y) arc (110:70:7);

\foreach \y in {2,4}
\foreach \x in {0,1,3,5,6,7}
\draw [->>] (3*\x,-0.25-1.5*\y)--(3*\x,-1.25-1.5*\y);

\foreach \y in {2,3}
\foreach \x in {0,1,4,5,6,7}
\draw [thick,->] (0.65+3*\x,-0.25-1.5*\y) arc (20:-20:3.7);

\foreach \y in {2,...,6} 
\draw [->] (0.9,-1.5*\y)--(2.1,-1.5*\y);
\foreach \y in {0,2,3,4,5,6,7} 
\draw [->] (18.9,-1.5*\y)--(20.1,-1.5*\y);

\foreach \y in {0,1,3,4,5,7} 
\draw [dashed,->] (3.9,-1.5*\y)--(5.1,-1.5*\y);
\foreach \y in {0,1,3,4,5,7} 
\draw [dashed,->] (15.9,-1.5*\y)--(17.1,-1.5*\y);

\draw [blue] (-0.7+12,0.25-4.5)--(0.7+12,0.25-4.5)--(0.7+12,-0.25-4.5)--(-0.7+12,-0.25-4.5)--(-0.7+12,0.25-4.5);

\draw [dashed,red] (-0.7+12,0.25-1.5)--(0.7+21,0.25-1.5)--(0.7+21,-0.25-4.5)--(-0.7+15,-0.25-4.5)--(-0.7+12,-0.25-3)--(-0.7+12,0.25-1.5);
\draw [dashed,red] (-0.7+3,1-3)--(0.7+6,0.25-4.5)--(0.7+6,-0.25-4.5)--(-0.7+3,-0.25-4.5)--(-0.7+3,1-3);
\draw [dashed,red] (-0.7+12,0.25-7.5)--(0.7+12,0.25-7.5)--(0.7+18,0.25-10.5)--(0.7+21,0.25-10.5)--(0.7+21,-0.25-10.5)--(-0.7+12,-0.25-10.5)--(-0.7+12,0.25-7.5);

\draw [thick,foge] (-0.7+3,0.25-6)--(0.7+9,0.25-6)--(0.7+9,-0.25-10.5)--(-0.7+3,-0.25-10.5)--(-0.7+3,0.25-6);
\draw [thick,foge] (-0.7+3,0.25-1.5)--(0.7+9,0.25-1.5)--(0.7+9,-0.25-4.5)--(-0.7+9,-0.25-4.5)--(-0.7+3,-0.25-1.5)--(-0.7+3,0.25-1.5);
\draw [thick,foge] (-0.7+12,0.25-6)--(0.7+21,0.25-6)--(0.7+21,-0.25-9)--(-0.7+18,-0.25-9)--(-0.7+12,-0.25-6)--(-0.7+12,0.25-6);
\end{tikzpicture}
\end{center}
\caption{Crystal graph of $\B\ot \B$ for the Lie algebra $B_{n}^{(1)} (n\geq 3)$}
\label{fig8}
\end{figure}
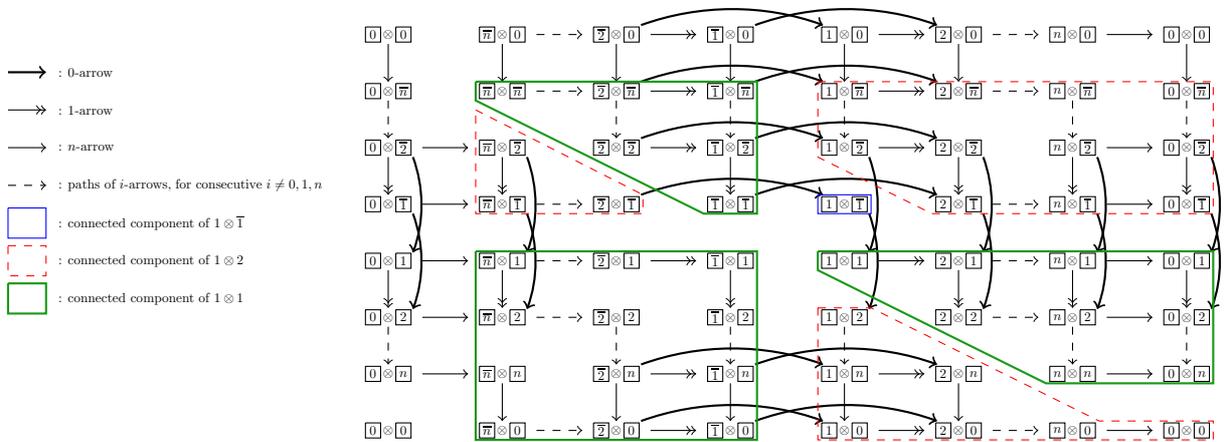

The energy $H$ on $\B\ot \B$ such that $H(0\ot 0)=0$ is given by the matrix
\begin{equation}\label{eq:energyB}
H= \bordermatrix{
\text{}&1&2&\cdots&n&0&\overline{n}&\cdots&\overline{2}&\overline{1}
\cr 1 &1&\cdots &\cdots&1&1&1&\cdots & \cdots &1
\cr 2& 0 &\ddots&1^*&\vdots&\vdots&\vdots&&&\vdots
\cr \vdots&\vdots&\ddots&\ddots &\vdots&\vdots&\vdots&1^*&&\vdots
\cr n&\vdots&&\ddots&1&1&1&\cdots&\cdots&1
\cr 0&\vdots&&&\ddots&0&1&\cdots&\cdots&1
\cr \overline{n}&\vdots&&&&\ddots&1&\cdots&\cdots&1
\cr \vdots&\vdots&&0^*&&&\ddots&\ddots&1^*&\vdots
\cr \overline{2}&0&0&&&& &\ddots &\ddots &\vdots
\cr \overline{1}&-1&0&\cdots&\cdots&\cdots&\cdots&\cdots& 0&1 
} .
\end{equation}

There are three standard modules of level $1$:
\begin{itemize}
\item $L(\Lambda_n)$, with ground state path $\p_{\Ll_n}=\cdots\,0\,0\,0\,0,$
\item $L(\Lambda_0)$, with ground state path $\p_{\Ll_0}=\cdots\,\overline{1}\,1\,\overline{1}\,1\,\overline{1},$
\item $L(\Lambda_1)$, with ground state path $\p_{\Ll_1}=\cdots\,1\,\overline{1}\,1\,\overline{1}\,1.$
\end{itemize}

The character formula for $L(\Lambda_n)$, which was already proved by the second author via other methods in \cite{Konan_ww2}, can be proved quite easily with Theorem \ref{theo:formchar} without any of the novelties introduced in this paper. Therefore we leave this proof to the interested reader.

However, $L(\Lambda_0)$ and $L(\Lambda_1)$ have non-constant ground state paths, so our multi-grounded partitions are once again useful to prove the character formulas of Theorem \ref{theo:charbn1}.

\subsubsection{Character for $L(\Lambda_0)$}
We start with $L(\Lambda_0)$. Note that the energy in \eqref{eq:energyB} is the exactly the same as the energy in \eqref{eq:Ha2}, except that the row and column $0$ were added.
Thus, we proceed exactly as we did for $A_{2n-1}^{(1)}(n\geq 3)$, except that we add parts coloured $c_0$. 

We apply again \Thm{theo:formchar2} with $d=2$ and $D=2$, which gives
\begin{equation}
\label{eq:thB0}
\sum_{\pi\in \, _2^2\Pp_{c_{\overline{1}}c_1}^{\gg}} C(\pi)q^{|\pi|} = \frac{e^{-\Ll}\mathrm{ch}(L(\Lambda_0))}{(q^2;q^2)_{\infty}},
\end{equation}
where $q=e^{-\delta/2}$ and $c_b=e^{\wt b}$ for all $b\in \B$.

If we temporarily forget the parity of the number of parts, the multi-grounded partitions of $_2^2\Pp_{c_{\overline{1}}c_1}$ in this section are obtained from the multi-grounded partitions of $_2^2\Pp_{c_{\overline{1}}c_1}$ from Section \ref{sec:A0} by adding odd parts coloured $c_0$ which can repeat (and placing them between $c_n$ and $c_{\overline{n}}$ in the partial order). So we obtain the generating function
$$\frac{(-c_1q,-c_{\overline{1}}q,\ldots,-c_nq,-c_{\overline{n}}q;q^2)_{\infty}}{(c_{\overline{1}}c_1q^4;q^4)_{\infty} (c_0q;q^2)_{\infty}} .$$
Now taking into account that the number of parts must be even, we find that  $_2^2\Pp_{c_{\overline{1}}c_1}^{\gg}$ is generated by
$$\sum_{\pi \in \, _2^2\Pp_{c_{\overline{1}}c_1}^{\gg}} C(\pi)q^{|\pi|}= \frac{1}{2(c_{\overline{1}}c_1q^4;q^4)_{\infty}}\left(\frac{(-c_1q,-c_{\overline{1}}q,\ldots,-c_nq,-c_{\overline{n}}q;q^2)_{\infty}}{(c_0q;q^2)_{\infty}}+\frac{(c_1q,c_{\overline{1}}q,\ldots,c_nq,c_{\overline{n}}q;q^2)_{\infty}}{(-c_0q;q^2)_{\infty}}\right). $$
By taking $c_0=c_{\overline{1}}c_1=1$, we then obtain
\begin{equation*}
 \sum_{\pi \in \, _2^2\Pp_{c_{\overline{1}}c_1}^{\gg}} C(\pi)q^{|\pi|}=\frac{(-q,-c_1q,-c_{\overline{1}}q,\ldots,-c_nq,-c_{\overline{n}}q;q^2)_{\infty}+(q,c_1q,c_{\overline{1}}q,\ldots,c_nq,c_{\overline{n}}q;q^2)_{\infty}}{2(q^2;q^2)_{\infty}},
\end{equation*}
and using \eqref{eq:thB0}, we obtain \eqref{eq:b0}.

\subsubsection{Character for $L(\Lambda_1)$}
To compute the character for $L(\Lambda_1)$, we do exactly the same reasoning as for $L(\Lambda_0)$: we start from the generating function corresponding to $L(\Lambda_1)$ in $A_{2n-1}^{(2)}$, and we add the parts coloured $c_0$.
We obtain
\begin{align*}
 \sum_{\pi \in \, _2^2\Pp_{c_1 c_{\overline{1}}}^{\gg}} C(\pi)q^{|\pi|}=\frac{1}{2(q^2;q^2)_{\infty}}\Big( &(-q,-c_1q^3,-c_{\overline{1}}q^{-1},-c_2q,-c_{\overline{2}}q,\ldots,-c_nq,-c_{\overline{n}}q;q^2)
 \\&+(q,c_1q^3,c_{\overline{1}}q^{-1},c_2q,c_{\overline{2}}q\ldots,c_nq,c_{\overline{n}}q;q^2)\Big),
\end{align*}
and \eqref{eq:b1} follows.

\subsection{The Lie algebra $D_{n}^{(1)}(n\geq 4)$}
We conclude this section of examples with the Lie algebra $D_{n}^{(1)}$ for $n\geq 4$.

The crystal $\B$ of the vector representation of $D_{n}^{(1)}(n\geq 4)$ is given by the crystal graph in Figure \ref{fig9}, with for all $u\in \{1,\ldots,n\}$
$$\wt (u) = -\wt (\overline{u}) = \frac{\alpha_n}{2}+\frac{\alpha_{n-1}}{2}+ \sum_{i=u}^{n-2} \alpha_i.$$
Here, the null root is
\[\delta = \alpha_0+\alpha_1+\alpha_{n-1}+\alpha_n+2\sum_{i=2}^{n-2} \alpha_i .\]

\begin{figure}[H]
\begin{center}
\begin{tikzpicture}[scale=0.8, every node/.style={scale=0.8}]
%%crystal graph B
\draw (-4,0) node{$\B$ :};
\draw (-6,-0.75) node{$b^{\Ll_0}=b_{\Ll_1}=1\qquad b^{\Ll_1}=b_{\Ll_0}=\overline{1}$};
\draw (-6,-1.5) node{$\p_{\Ll_0}=(\cdots\,\overline{1}\,1\,\overline{1}\,1\,\overline{1})\qquad \p_{\Ll_1}=(\cdots\,1\,\overline{1}\,1\,\overline{1}\,1)$};
\draw (-6,-2.25) node{$b^{\Ll_n}=b_{\Ll_{n-1}}=\overline{n}\qquad b^{\Ll_{n-1}}=b_{\Ll_n}=n$};
\draw (-6,-3) node{$\p_{\Ll_{n-1}}=(\cdots\,\overline{n}\,n\,\overline{n}\,n\,\overline{n})\qquad \p_{\Ll_n}=(\cdots\,n\,\overline{n}\,n\,\overline{n}\,n)$};
\draw (0,0) node {$1$};
\draw (1.5,0) node {$2$};
\draw (5,0) node {$n-1$};
\draw (6.75,0) node {$n$};
\draw (3.15,0) node {$\cdots$};
\draw (0,-1.5) node {$\overline{1}$};
\draw (1.5,-1.5) node {$\overline{2}$};
\draw (5,-1.5) node {$\overline{n-1}$};
\draw (6.75,-1.5) node {$\overline{n}$};
\draw (3.15,-1.5) node {$\cdots$};

\foreach \x in {0,1,4.5} 
\foreach \y in {0,1}
\draw (-0.2+1.5*\x,0.2-1.5*\y)--(0.2+1.5*\x,0.2-1.5*\y)--(0.2+1.5*\x,-0.2-1.5*\y)--(-0.2+1.5*\x,-0.2-1.5*\y)--(-0.2+1.5*\x,0.2-1.5*\y);
\foreach \x in {0} 
\foreach \y  in {0,1}
\draw (-0.45+5+2*\x,0.2-1.5*\y)--(0.45+5+2*\x,0.2-1.5*\y)--(0.45+5+2*\x,-0.2-1.5*\y)--(-0.45+5+2*\x,-0.2-1.5*\y)--(-0.45+5+2*\x,0.2-1.5*\y);

\draw [thick, <-] (0.05,-0.25)--(1.45,-1.25);
\draw [thick, <-] (1.45,-0.25)--(0.05,-1.25);
\draw (0.3,-0.9) node {\footnotesize{$0$}};
\draw (1.2,-0.9) node {\footnotesize{$0$}};

\draw [thick, ->] (0.05+5.25,-0.25)--(1.45+5.25,-1.25);
\draw [thick, ->] (1.45+5.25,-0.25)--(0.05+5.25,-1.25);
\draw (0.3+5.25,-0.9) node {\footnotesize{$n$}};
\draw (1.2+5.25,-0.9) node {\footnotesize{$n$}};

\draw [thick, ->] (0.3,0)--(1.2,0);
\draw [thick, <-] (0.3,-1.5)--(1.2,-1.5);
\draw (0.75,0.15) node {\footnotesize{$1$}};
\draw (0.75,-1.67) node {\footnotesize{$1$}};
\draw [thick, ->] (1.8,0)--(2.7,0);
\draw [thick, <-] (1.8,-1.5)--(2.7,-1.5);
\draw (2.25,0.15) node {\footnotesize{$2$}};
\draw (2.25,-1.67) node {\footnotesize{$2$}};
\draw [thick, ->] (3.6,0)--(4.5,0);
\draw [thick, <-] (3.6,-1.5)--(4.5,-1.5);
\draw (4.05,0.15) node {\footnotesize{$n-2$}};
\draw (4.05,-1.67) node {\footnotesize{$n-2$}};
\draw [thick, ->] (5.6,0)--(6.4,0);
\draw [thick, <-] (5.6,-1.5)--(6.4,-1.5);
\draw (6,0.15) node {\footnotesize{$n-1$}};
\draw (6,-1.67) node {\footnotesize{$n-1$}};

\end{tikzpicture}
\end{center}
\caption{Crystal graph $\B$  of the vector representation for the Lie algebra $D_{n}^{(1)}(n\geq 4)$}
\label{fig9}
\end{figure}
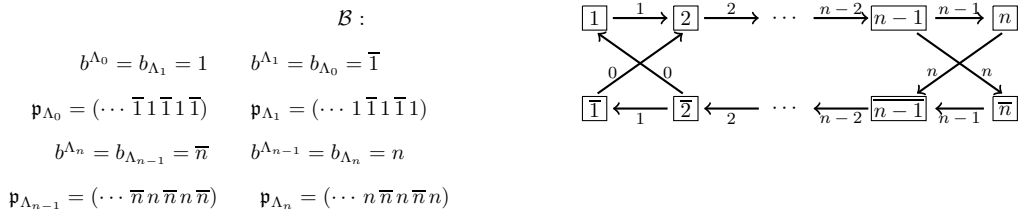

The crystal graph for $\B\ot \B$ is given in Figure \ref{fig10}, where we wrote $-1$ instead of $n-1$ for space reasons.
\begin{figure}[H]
\begin{center}
\begin{tikzpicture}[scale=0.5, every node/.style={scale=0.5}]
\draw [thick,->] (-10,0)--(-9,0);
\draw (-8.8,0) node[right] {: $0$-arrow};
\draw [->>] (-10,-1)--(-9,-1);
\draw (-8.8,-1) node[right] {: $1$-arrow};
\draw [->] (-10,-2)--(-9,-2);
\draw (-8.8,-2) node[right] {: $-1$-arrow};
\draw [thick,->>] (-10,-3)--(-9,-3);
\draw (-8.8,-3) node[right] {: $n$-arrow};
\draw [dashed,->] (-10,-4)--(-9,-4);
\draw (-8.8,-4) node[right] {: paths of $i$-arrows, for consecutive $i\neq 0,1,-1,n$};
\draw [blue] (-10,-4.6)--(-9,-4.6)--(-9,-5.4)--(-10,-5.4)--(-10,-4.6);
\draw (-8.8,-5) node[right] {: connected component of $1\ot \overline{1}$};
\draw [dashed,red] (-10,-5.6)--(-9,-5.6)--(-9,-6.4)--(-10,-6.4)--(-10,-5.6);
\draw (-8.8,-6) node[right] {: connected component of $1\ot 2$};
\draw [thick,foge] (-10,-6.6)--(-9,-6.6)--(-9,-7.4)--(-10,-7.4)--(-10,-6.6);
\draw (-8.8,-7) node[right] {: connected component of $1\ot 1$};
\draw (-10,-8) node[right] {$a\neq n,-1$};
\draw (-10,-9) node[right] {$b\neq \overline{n},\overline{-1}$};
%%crystal graph B
\foreach \x in {0,...,7} 
\foreach \y in {0,...,7}
\draw (-0.2-0.4+3*\x,0.2-1.5*\y)--(0.2-0.4+3*\x,0.2-1.5*\y)--(0.2-0.4+3*\x,-0.2-1.5*\y)--(-0.2-0.4+3*\x,-0.2-1.5*\y)--(-0.2-0.4+3*\x,0.2-1.5*\y);
\foreach \x in {0,...,7} 
\foreach \y in {0,...,7}
\draw (-0.2+0.4+3*\x,0.2-1.5*\y)--(0.2+0.4+3*\x,0.2-1.5*\y)--(0.2+0.4+3*\x,-0.2-1.5*\y)--(-0.2+0.4+3*\x,-0.2-1.5*\y)--(-0.2+0.4+3*\x,0.2-1.5*\y);

\foreach \x in {0,...,7} 
\foreach \y in {0,...,7}
\draw (3*\x,-1.5*\y) node {$\ot$};

\foreach \x in {0,...,7} 
\draw (0.4+3*\x,0) node {$\overline{n}$};
\foreach \y in {0,...,7} 
\draw (-0.4,-1.5*\y) node {$\overline{n}$};
\foreach \x in {0,...,7} 
\draw (0.4+3*\x,-10.5) node {$n$};
\foreach \y in {0,...,7} 
\draw (-0.4+21,-1.5*\y) node {$n$};

\foreach \x in {0,...,7}
\draw (0.4+3*\x,-1.5) node {$\overline{-1}$};
\foreach \y in {0,...,7} 
\draw (-0.4+3,-1.5*\y) node {$\overline{-1}$};

\foreach \x in {0,...,7}
\draw (0.4+3*\x,-3) node {$\overline{2}$};
\foreach \y in {0,...,7} 
\draw (-0.4+6,-1.5*\y) node {$\overline{2}$};

\foreach \x in {0,...,7}
\draw (0.4+3*\x,-4.5) node {$\overline{1}$};
\foreach \y in {0,...,7} 
\draw (-0.4+9,-1.5*\y) node {$\overline{1}$};

\foreach \x in {0,...,7}
\draw (0.4+3*\x,-6) node {$1$};
\foreach \y in {0,...,7} 
\draw (-0.4+12,-1.5*\y) node {$1$};

\foreach \x in {0,...,7}
\draw (0.4+3*\x,-7.5) node {$2$};
\foreach \y in {0,...,7} 
\draw (-0.4+15,-1.5*\y) node {$2$};

\foreach \x in {0,...,7}
\draw (0.4+3*\x,-9) node {$-1$};
\foreach \y in {0,...,7} 
\draw (-0.4+18,-1.5*\y) node {$-1$};

\foreach \x in {1,...,5,7} 
\draw [->] (3*\x,-0.25)--(3*\x,-1.25);
\foreach \x in {1,...,5,7} 
\draw [->] (3*\x,-9.25)--(3*\x,-10.25);

\foreach \x in {0,2,3,4,6,7} 
\draw [dashed,->] (3*\x,-1.75)--(3*\x,-2.75);
\foreach \x in {0,2,3,4,6,7} 
\draw [dashed,->] (3*\x,-7.75)--(3*\x,-8.75);

\foreach \x in {2,4}
\foreach \y in {0,1,2,4,6,7}
\draw [->>] (0.9+3*\x,-1.5*\y)--(2.1+3*\x,-1.5*\y);

\foreach \x in {2,3}
\foreach \y in {0,1,2,3,6,7}
\draw [thick,->] (0.65+3*\x,0.25-1.5*\y) arc (110:70:7);

\foreach \y in {2,4}
\foreach \x in {0,1,3,5,6,7}
\draw [->>] (3*\x,-0.25-1.5*\y)--(3*\x,-1.25-1.5*\y);

\foreach \y in {2,3}
\foreach \x in {0,1,4,5,6,7}
\draw [thick,->] (0.65+3*\x,-0.25-1.5*\y) arc (20:-20:3.7);

\foreach \y in {0,2,3,4,5,6} 
\draw [->] (0.9,-1.5*\y)--(2.1,-1.5*\y);
\foreach \y in {0,2,3,4,5,6} 
\draw [->] (18.9,-1.5*\y)--(20.1,-1.5*\y);

\foreach \y in {0,1,3,4,5,7} 
\draw [dashed,->] (3.9,-1.5*\y)--(5.1,-1.5*\y);
\foreach \y in {0,1,3,4,5,7} 
\draw [dashed,->] (15.9,-1.5*\y)--(17.1,-1.5*\y);

\draw [blue] (-0.7+12,0.25-4.5)--(0.7+12,0.25-4.5)--(0.7+12,-0.25-4.5)--(-0.7+12,-0.25-4.5)--(-0.7+12,0.25-4.5);

\draw [dashed,red] (-0.7+12,0.25)--(0.7+21,0.25)--(0.7+21,-0.25-4.5)--(-0.7+15,-0.25-4.5)--(-0.7+12,-0.25-3)--(-0.7+12,0.25);
\draw [dashed,red] (-0.7,0.25-1.5)--(0.7,0.25-1.5)--(0.7+6,0.25-4.5)--(0.7+6,-0.25-4.5)--(-0.7,-0.25-4.5)--(-0.7,0.25-1.5);
\draw [dashed,red] (-0.7+12,0.25-7.5)--(0.7+12,0.25-7.5)--(0.7+18,0.25-10.5)--(0.7+18,-0.25-10.5)--(-0.7+12,-0.25-10.5)--(-0.7+12,0.25-7.5);

\draw [dashed,red] (-0.7,0.25-10.5)--(0.7,0.25-10.5)--(0.7,-0.25-10.5)--(-0.7,-0.25-10.5)--(-0.7,0.25-10.5);

\draw [thick,foge] (-0.7,0.25-6)--(0.7+9,0.25-6)--(0.7+9,-0.25-10.5)--(-0.7+3,-0.25-10.5)--(-0.7+3,-0.25-9)--(-0.7,-0.25-9)--(-0.7,0.25-6);
\draw [thick,foge] (-0.7,0.25)--(0.7+9,0.25)--(0.7+9,-0.25-4.5)--(-0.7+9,-0.25-4.5)--(-0.7,-0.25)--(-0.7,0.25);
\draw [thick,foge] (-0.7+12,0.25-6)--(0.7+21,0.25-6)--(0.7+21,-0.25-10.5)--(-0.7+21,-0.25-10.5)--(-0.7+12,-0.25-6)--(-0.7+12,0.25-6);

\foreach \x in {1,2,5,6} 
\foreach \y in {8,9}
\draw (-0.2-0.4+3*\x,0.2-1.5*\y)--(0.2-0.4+3*\x,0.2-1.5*\y)--(0.2-0.4+3*\x,-0.2-1.5*\y)--(-0.2-0.4+3*\x,-0.2-1.5*\y)--(-0.2-0.4+3*\x,0.2-1.5*\y);
\foreach \x in {1,2,5,6} 
\foreach \y in {8,9}
\draw (-0.2+0.4+3*\x,0.2-1.5*\y)--(0.2+0.4+3*\x,0.2-1.5*\y)--(0.2+0.4+3*\x,-0.2-1.5*\y)--(-0.2+0.4+3*\x,-0.2-1.5*\y)--(-0.2+0.4+3*\x,0.2-1.5*\y);

\foreach \x in {1,2,5,6} 
\foreach \y in {8,9}
\draw (3*\x,-1.5*\y) node {$\ot$};

\foreach \x in {1,5} 
\foreach \y in {8,9}
\draw [thick,->>] (0.9+3*\x,-1.5*\y)--(2.1+3*\x,-1.5*\y);

\draw (0.4+3,-12) node {$n$};
\draw (0.4+6,-12) node {$\overline{-1}$};
\draw (0.4+15,-12) node {$-1$};
\draw (0.4+18,-12) node {$\overline{n}$};

\draw (-0.4+3,-13.5) node {$n$};
\draw (-0.4+6,-13.5) node {$\overline{-1}$};
\draw (-0.4+15,-13.5) node {$-1$};
\draw (-0.4+18,-13.5) node {$\overline{n}$};

\foreach \x in {1,2,5,6} 
\draw (-0.4+3*\x,-12) node {$a$};

\foreach \x in {1,2,5,6} 
\draw (0.4+3*\x,-13.5) node {$b$};

\end{tikzpicture}
\end{center}
\caption{Crystal graph of $\B\ot \B$ for the Lie algebra $D_{n}^{(1)} (n\geq 4)$}
\label{fig10}
\end{figure}
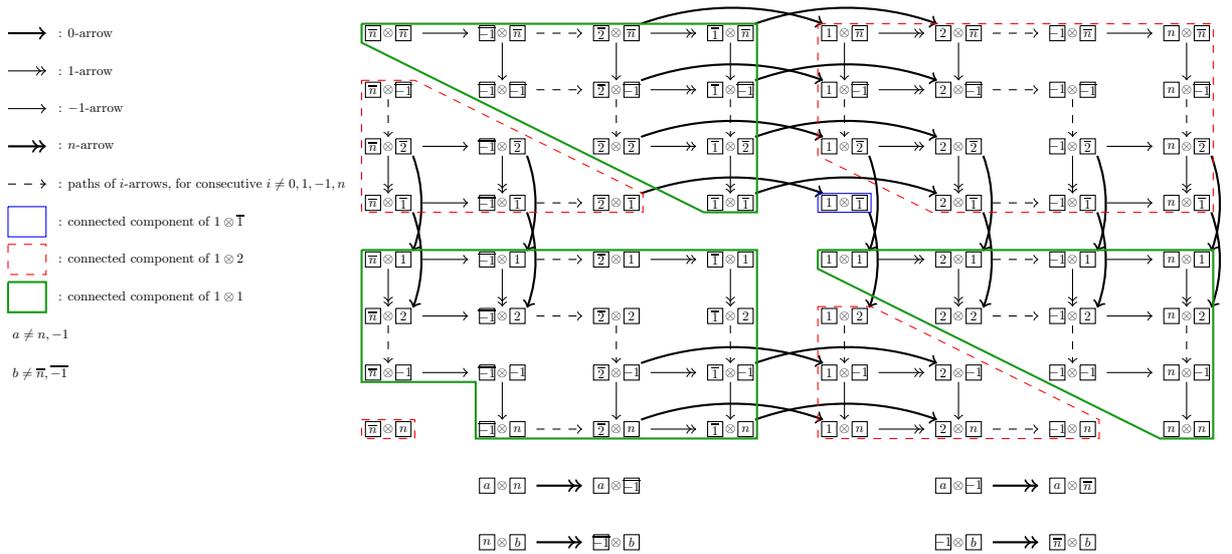

The energy $H$ on $\B\ot \B$ such that $H(\overline{n}\ot n)=0$ is given by the matrix
\begin{equation*}\label{eq:energyD1}
H= \bordermatrix{
\text{}&1&2&\cdots&n-1&n&\overline{n}&\overline{n-1}&\cdots&\overline{2}&\overline{1}
\cr 1 &1&\cdots &\cdots&\cdots&1&1&\cdots & \cdots & \cdots&1
\cr 2& 0 &\ddots&&1^*&\vdots&\vdots&&&&\vdots
\cr \vdots&\vdots&\ddots&\ddots &&\vdots&\vdots&&1^*&&\vdots
\cr n-1&\vdots&&\ddots&\ddots&\vdots&1&\cdots&\cdots&\cdots&1
\cr n&\vdots&&&\ddots&1&0&1&\cdots&\cdots&1
\cr \overline{n}&\vdots&&&&0&1&\cdots&\cdots&\cdots&1
\cr \overline{n-1}&\vdots&&0^*&&&\ddots&\ddots&1^*&&1
\cr \vdots&\vdots&&&&&&\ddots&\ddots&&\vdots
\cr \overline{2}&0&0&&&&& &\ddots &\ddots &\vdots
\cr \overline{1}&-1&0&\cdots&\cdots&\cdots&\cdots&\cdots&\cdots& 0&1 
} .
\end{equation*}
Note that this is almost the same as the energy matrix \eqref{eq:Ha2}, except that here, we have $H(\overline{n}\ot n)=0$ instead of $1$.

These difference conditions correspond the partial order 
\begin{equation*}
\label{eq:po}
\cdots\ll 0_{c_{n-1}}\ll\begin{array}{c} 0_{c_n} \\ 0_{c_{\overline{n}}}\end{array}\ll 0_{c_{\overline{n-1}}}\ll \cdots \ll 0_{c_{\overline{2}}}\ll\begin{array}{c} 0_{c_{\overline{1}}} \\ 1_{c_1}\end{array}\ll 1_{c_2}\ll\cdots\ll 1_{c_{n-1}}\ll\cdots.
\end{equation*}

There are four standard modules of level $1$:
\begin{itemize}
\item $L(\Lambda_0)$, with ground state path $\p_{\Ll_0}=\cdots\,\overline{1}\,1\,\overline{1}\,1\,\overline{1},$
\item $L(\Lambda_1)$, with ground state path $\p_{\Ll_1}=\cdots\,1\,\overline{1}\,1\,\overline{1}\,1,$
\item $L(\Lambda_{n-1})$, with ground state path $\p_{\Ll_{n-1}}=\cdots\,\overline{n}\,n\,\overline{n}\,n\,\overline{n},$
\item $L(\Lambda_n)$, with ground state path $\p_{\Ll_n}=\cdots\,n\,\overline{n}\,n\,\overline{n}\,n,$
\end{itemize}
We have 
\[H(\overline{1}\ot 1)=-H(1\ot \overline{1})=1\]
and 
\[H(\overline{n}\ot n)=H(n\ot \overline{n})=0\, ,\]
so the sums of the energies are $0$ on all these ground state paths, and we can choose $H_{\Lambda}=H$ for all the above-mentioned modules.

We show briefly how to apply Theorem \ref{theo:formchar2} to obtain Theorem \ref{theo:chardn1}. The principle is the same as in the previous sections.

\subsubsection{Character for $L(\Lambda_0)$}
As in the case of $A_{2n-1}^{(2)}$, we apply \Thm{theo:formchar2} with $d=2$ and $D=2$. We obtain
\begin{equation}
\label{eq:thd10}
\sum_{\pi\in \, _2^2\Pp_{c_{\overline{1}}c_1}^{\gg}} C(\pi)q^{|\pi|} = \frac{e^{-\Ll_0}\mathrm{ch}(L(\Lambda_0))}{(q^2;q^2)_{\infty}},
\end{equation}
where $q=e^{-\delta/2}$ and $c_b=e^{\wt b}$ for all $b\in \B$.

Again, the difference between consecutive parts of the multi-grounded partitions in $_2^2\Pp_{c_{\overline{1}}c_1}^{\gg}$ is even, and because $u^{(0)}=-1$ and $u^{(1)}=1$ again, all the parts are odd. So the partial order becomes
\[\begin{array}{c} (-1)_{c_{\overline{1}}} \\ 1_{c_1}\end{array}\ll 1_{c_2}\ll\cdots\ll 1_{c_{n-1}}\ll\begin{array}{c} 1_{c_n} \\ 1_{c_{\overline{n}}}\end{array}\ll 1_{c_{\overline{n-1}}}\ll \cdots \ll 1_{c_{\overline{2}}}\ll\begin{array}{c} 1_{c_{\overline{1}}} \\ 3_{c_1}\end{array}\ll 3_{c_2}\ll\cdots\ll 3_{c_{n-1}}\ll\cdots .\]

Here, in addition to the alternating sequences
 $$\cdots \ll (2k-1)_{c_{\overline{1}}}\ll (2k+1)_{c_1}\ll(2k-1)_{c_{\overline{1}}}\ll \cdots $$
already present in $A_{2n-1}^{(2)}$,
we also have to consider alternating sequences of the form 
$$ \cdots\ll(2k+1)_{c_n}\ll (2k+1)_{c_{\overline{n}}}\ll (2k+1)_{c_n}\cdots .$$
Thus the generating function without the condition on the parity of the number of parts is
$$\frac{(-c_1q,-c_{\overline{1}}q,\ldots,-c_nq,-c_{\overline{n}}q;q^2)_{\infty}}{(c_{\overline{1}}c_1q^4;q^4)_{\infty}(c_nc_{\overline{n}}q^2;q^4)_{\infty}} .$$

So we deduce
\begin{align*}
 \sum_{\pi \in _2^2\Pp_{c_{\overline{1}}c_1}^{\gg}} C(\pi)q^{|\pi|}= \frac{1}{2(c_{\overline{1}}c_1q^4;q^4)_{\infty}(c_nc_{\overline{n}}q^2;q^4)_{\infty}} \Big( &(-c_1q,-c_{\overline{1}}q,\ldots,-c_nq,-c_{\overline{n}}q;q^2)_{\infty}
 \\&+(c_1q,c_{\overline{1}}q,\ldots,c_nq,c_{\overline{n}}q;q^2)_{\infty}\Big).
\end{align*}

By taking $c_nc_{\overline{n}}=c_1 c_{\overline{1}}=1$, we obtain
\[
 \sum_{\pi \in _2^2\Pp_{c_{\overline{1}}c_1}^{\gg}} C(\pi)q^{|\pi|}= \frac{1}{2(q^2;q^2)_{\infty}}\left((-c_1q,-c_{\overline{1}}q,\ldots,-c_nq,-c_{\overline{n}}q;q^2)_{\infty}+(c_1q,c_{\overline{1}}q,\ldots,c_nq,c_{\overline{n}}q;q^2)_{\infty}\right),
\]
and using \eqref{eq:thd10}, we deduce \eqref{eq:d10}.

\subsubsection{Character for $L(\Lambda_1)$}
This case works in the exact same way as the previous one, so we omit the details. We obtain
\begin{align*}
 \sum_{\pi \in\, _2^2\Pp_{c_1c_{\overline{1}}}^{\gg}} C(\pi)q^{|\pi|} =\frac{1}{2(q^2;q^2)_{\infty}} \Big(&(-c_1q^3,-c_{\overline{1}}q^{-1},-c_2q,-c_{\overline{2}}q,\ldots,-c_nq,-c_{\overline{n}}q;q^2)_{\infty}
 \\&+(c_1q^3,c_{\overline{1}}q^{-1},c_2q,c_{\overline{2}}q\ldots,c_nq,c_{\overline{n}}q;q^2)_{\infty}\Big),
\end{align*}
and we conclude with Theorem \ref{theo:formchar2} as usual.

\subsubsection{Character for $L(\Lambda_{n-1})$}
Since $H(\overline{n}\ot n)=H(n\ot \overline{n})=0$, we can choose $D=1$, and we have $u^{(0)}=u^{(1)}=0$. 

We apply Theorem \ref{theo:formchar2} with $D=d=1$, and obtain
\begin{equation}
\label{eq:thd1n-1}
\sum_{\pi \in _2\Pp_{c_{\overline{n}}c_n}^{\gg}} C(\pi)q^{|\pi|} =  \frac{e^{-\Ll_{n-1}}\mathrm{ch}(L(\Lambda_{n-1}))}{(q;q)_{\infty}},
\end{equation}
where $q=e^{-\delta}$ and $c_b=e^{\wt b}$ for all $b\in \B$.

Now $d=1$ so we consider directly the partial order \eqref{eq:po}.
By reasoning on the tail $(0_{c_{\overline{n}}},0_{c_n})$ of the multi-grounded partitions in $_2\Pp_{c_{\overline{n}}c_n}^{\gg}$ in the same way as in the case of $A_{2n-1}^{(2)}$, and using \eqref{eq:evennumberparts} again, we obtain the generating function:
\begin{align*}
 \sum_{\pi \in _2\Pp_{c_{\overline{n}}c_n}^{\gg}} C(\pi)q^{|\pi|}=\frac{1}{2(c_1c_{\overline{1}}q;q^2)_{\infty}(c_nc_{\overline{n}}q^2;q^2)_{\infty}}\Big(&(-c_1q,-c_{\overline{1}},\ldots,-c_{n-1}q,-c_{\overline{n-1}},-c_n,-c_{\overline{n}}q;q)_{\infty}\\
 &+(c_1q,c_{\overline{1}},\ldots,c_{n-1}q,c_{\overline{n-1}},c_n,c_{\overline{n}}q;q)_{\infty}\Big),
\end{align*}
and using \eqref{eq:thd1n-1}, we deduce \eqref{eq:d1n-1} (note that in Theorem \ref{theo:chardn1}, we have set $q=e^{-\delta/2}$ for the whole theorem for consistency, so the $q$'s of this formula are squared).

\subsubsection{Character for $L(\Lambda_n)$}
We do the same reasoning as before except that now the tail is $(0_{c_n},0_{c_{\overline{n}}})$, and we obtain
\begin{align*}
 \sum_{\pi \in _2\Pp_{c_nc_{\overline{n}}}^{\gg}} C(\pi)q^{|\pi|}=\frac{1}{2(c_1c_{\overline{1}}q;q^2)_{\infty}(c_nc_{\overline{n}}q^2;q^2)_{\infty}}\Big(&(-c_1q,-c_{\overline{1}},\ldots,-c_{n-1}q,-c_{\overline{n-1}},-c_nq,-c_{\overline{n}};q)_{\infty}\\
 &+(c_1q,c_{\overline{1}},\ldots,c_{n-1}q,c_{\overline{n-1}},c_nq,c_{\overline{n}};q)_{\infty}\Big),
\end{align*}
and we conclude once again with Theorem \ref{theo:formchar2}.

\section{Conclusion}
The point of this paper is to introduce the notion of multi-grounded partitions and to show how they can be used to obtain character formulas, even for modules whose ground state paths are not constant. As examples, we studied the level $1$ standard modules of several classical affine Lie algebras which have relatively simple energy functions. However, our method can be applied for representations at any level, which we plan to do in subsequent papers. 

\section*{Acknowledgements}
We thank Leonard Hardiman and Ole Warnaar for their comments on earlier versions of this paper and their helpful suggestions to improve it.

\bibliographystyle{alpha}
\bibliography{biblio}

\end{document}